\crefname{equation}{}{}
\newtheorem{theorem}{Theorem}
\newtheorem{corollary}[theorem]{Corollary}
\newtheorem{lemma}[theorem]{Lemma}
\newtheorem{definition}[theorem]{Definition}
\newtheorem{conjecture}[theorem]{Conjecture}
\newtheorem{proposition}[theorem]{Proposition}
\newtheorem{fact}{Fact}
\theoremstyle{definition}
\newcommand{\R}{\mathbb{R}}
\newcommand{\N}{\mathbb{N}}
\newcommand{\Z}{\mathbb{Z}}
\newcommand{\funcmin}{f_{\min}}
\newcommand{\funcmax}{f_{\max}}
\newcommand{\mg}{\mathbf{g}}
\newcommand{\cube}[1]{\mathrm{B}^{#1}}
\newcommand{\D}{\mathrm{D}}
\newcommand{\quadr}{\mathcal{Q}}
\newcommand{\preo}{\mathcal{T}}
\newcommand{\nnon}{\mathcal{P}_{\geq 0}}
\newcommand{\pos}{\mathcal{P}_{> 0}}
\newcommand{\x}{\mathbf{x}}
\newcommand{\y}{\mathbf{y}}
\newcommand{\z}{\mathbf{z}}
\renewcommand{\epsilon}{\varepsilon}
\newcommand{\cS}{\mathcal{S}}
\newcommand{\metric}[2]{\| #1 \|^{#2}_{#2}}
\newcommand{\Chebyconst}{\mathfrak{c}}
\newcommand{\Cheby}{T}
\title[Putinar's Positivstellensatz on the hypercube]%
{Degree bounds for Putinar's Positivstellensatz \\ on the hypercube}
\author{Lorenzo Baldi\textsuperscript{*} \and Lucas Slot\textsuperscript{$\dagger$}}
\address{\textsuperscript{*}Sorbonne Universit\'e, CNRS, F-75005, Paris, France}
\email{Lorenzo.Baldi@Lip6.fr}
\address{\textsuperscript{$\dagger$}ETH Z\"urich, Z\"urich, Switzerland}
\email{lucas.slot@inf.ethz.ch}
\date{\today}
\begin{document}

\begin{abstract}
The Positivstellens\"atze of Putinar and Schm\"udgen show that any polynomial $f$ positive on a compact semialgebraic set can be represented using sums of squares. Recently, there has been large interest in proving effective versions of these results, namely to show bounds on the required degree of the sums of squares in such representations. These \emph{effective} Positivstellens\"atze have direct implications for the convergence rate of the celebrated moment-SOS hierarchy in polynomial optimization. In this paper, we restrict to the fundamental case of the hypercube $\cube{n} = [-1, 1]^n$. We show an upper degree bound for Putinar-type representations on~$\cube{n}$ of the order $O(\funcmax/\funcmin)$, where $\funcmax$, $\funcmin$ are the maximum and minimum of $f$ on~$\cube{n}$, respectively. Previously, specialized results of this kind were available only for Schm\"udgen-type representations and not for Putinar-type ones.
Complementing this upper degree bound, we show a lower degree bound in $\Omega(\sqrt[8]{\funcmax/\funcmin})$. This is the first lower bound for Putinar-type representations on a semialgebraic set with nonempty interior described by a standard set of inequalities.
\end{abstract}

\maketitle

\vspace{-1cm}
\tableofcontents

\section{Introduction}

Let $S(\mathbf{g}) \subseteq \R^n$ be a (basic, closed) semialgebraic set, defined in terms of the tuple of polynomials $\mathbf{g} = (g_1, g_2, \ldots, g_m)$ as:
\[
    \cS(\mathbf{g}) = \{ \x \in \R^n : g_1(\x) \geq 0, \ldots, g_m(\x) \geq 0\}.
\]
Consider the problem of determining whether a given polynomial $f$ belongs to the cone $\nnon(\cS(\mg))$ of polynomials nonnegative on $\cS(\mg)$. In general, this is a hard problem. In the unconstrained case, a straightforward way of certifying nonnegativity of $f$ on $\R^n$ is to write
\[
    f(\x) = p_1(\x)^2 + p_2(\x)^2 + \ldots + p_\ell(\x)^2,
\]
i.e., to write $f$ as a \emph{sum of squares} of polynomials. Indeed, the cone $\Sigma[\x]$ of such polynomials is clearly contained in $\nnon(\R^n)$. This idea extends to the constrained case by considering the \emph{quadratic module}~$\quadr(\mathbf{g})$ and \emph{preordering}~$\preo(\mathbf{g})$ of $\mg$, given respectively by:
\begin{align}
    \label{EQ:qmod}
    \quadr(\mathbf{g}) &= \big\{ \, \sum_{i=0}^m \sigma_{i} g_i : \sigma_i \in \Sigma[\x], \quad i = 0, 1, \ldots, m \, \big\}, \\
    \label{EQ:preo}
    \preo(\mathbf{g}) &= \big\{ \, \sum_{I \subseteq [m]} \sigma_{I} g_I : \sigma_I \in \Sigma[\x], \quad I \subseteq [m] \, \big\}.
\end{align}
Here, $g_I := \prod_{i \in I} g_i$ for $I \subseteq [m] = \{1, 2, \ldots, m\}$, and we have adopted the convention that $g_0 = g_\emptyset = 1$. Note that the quadratic module generated by $\mg$ is contained in the preordering, and that they are both contained in $\nnon(\cS(\mg))$.
General representations for nonnegative polynomials on semialgebraic sets have been provided by Krivine \cite{krivine_anneaux_1964} and Stengle
\cite{stengle_nullstellensatz_1974}: these representations use \emph{ratios} of polynomials in the preordering, and extend Artin's solution to Hilbert's seventeenth problem \cite{artin_uber_1927}.

A natural question is then whether all nonnegative polynomials on $\cS(\mg)$ admit a denominator-free representation, i.e., whether they lie in $\preo(\mg)$ or even in $\quadr(\mg)$. While this is not true in general, the Positivstellens\"atze of Putinar (under mild conditions on~$\mg$) and Schm\"udgen show that this is the case if one restricts to the cone $\pos(\cS(\mg))$ of \emph{strictly positive} polynomials on a compact semialgebraic set $\cS(\mg)$.
\begin{theorem}[Schm\"udgen's Positivstellensatz~\cite{Schmudgen:positiv}]
\label{THM:Schmudgen}
Assume that $\cS(\mg) \subseteq \R^n$ is compact. We then have:
\[
    \pos(\cS(\mg)) \subseteq \preo(\mathbf{g}).
\]
\end{theorem}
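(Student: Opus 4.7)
The plan is to follow Schm\"udgen's original two-step strategy, taking the Krivine--Stengle Positivstellensatz (which provides representations with denominators inside $\preo(\mg)$) as a known black box. First, I would establish the key structural property that $\preo(\mg)$ is \emph{Archimedean}: for every polynomial $h \in \R[\x]$, there exists a constant $N > 0$ with $N - h \in \preo(\mg)$. Once this is in hand, a standard representation-theoretic argument yields that every $f \in \pos(\cS(\mg))$ lies in $\preo(\mg)$.

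For the Archimedean step, it suffices to show that $N_i - x_i^2 \in \preo(\mg)$ for each coordinate $x_i$, since the set of $h \in \R[\x]$ with this property is closed under sums, products, and squaring. Compactness of $\cS(\mg)$ provides a radius $R > 0$ with $R - x_i^2 > 0$ on $\cS(\mg)$ for each $i$. Applying Krivine--Stengle to this strictly positive polynomial yields $p, q \in \preo(\mg)$ with $p(R - x_i^2) = 1 + q$. The technical heart of the argument is W\"ormann's trick: combining such an identity with auxiliary sums of squares to clear the denominator $p$ and conclude that some larger $N_i - x_i^2$ itself lies in $\preo(\mg)$.

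For the passage from Archimedeanness to membership, suppose for contradiction that $f \in \pos(\cS(\mg))$ but $f \notin \preo(\mg)$. Using Zorn's Lemma, extend $\preo(\mg)$ to a maximal proper preordering $C \subseteq \R[\x]$ with $-f \in C$. A standard algebraic argument (of Kadison--Dubois/Becker--Schwartz type) shows that maximality forces $C$ to be the preimage of $[0, \infty)$ under some ring homomorphism $\varphi : \R[\x] \to \R$ which is nonnegative on $\preo(\mg)$ and satisfies $\varphi(f) \leq 0$. The Archimedean property bounds each $|\varphi(x_i)|$, so $\x^\ast := (\varphi(x_1), \ldots, \varphi(x_n))$ is a well-defined point of $\R^n$. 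Nonnegativity of $\varphi$ on $\preo(\mg)$ gives $g_j(\x^\ast) \geq 0$ for each $j$, whence $\x^\ast \in \cS(\mg)$, and then $f(\x^\ast) = \varphi(f) \leq 0$ contradicts $f \in \pos(\cS(\mg))$.

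The main obstacle is the W\"ormann manipulation in the Archimedean step: converting the Krivine--Stengle identity $p(R - x_i^2) = 1 + q$ into a denominator-free bound $N_i - x_i^2 \in \preo(\mg)$ is genuinely delicate, and it is here that the preordering structure (allowing arbitrary products $g_I$) is used crucially. Indeed, the analogous step fails in general for quadratic modules, which is precisely why Putinar's Positivstellensatz requires Archimedeanness as a separate assumption rather than deriving it from compactness of $\cS(\mg)$.
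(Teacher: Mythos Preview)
The paper does not prove this theorem: \Cref{THM:Schmudgen} is stated as background, attributed to Schm\"udgen, and used without proof. There is therefore no proof in the paper to compare your proposal against.

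For what it is worth, your outline is a faithful sketch of the now-standard \emph{algebraic} proof (essentially W\"ormann's argument): apply Krivine--Stengle to a polynomial such as $R-\|\x\|_2^2$ that is strictly positive on the compact set $\cS(\mg)$, use the resulting identity $p\cdot(R-\|\x\|_2^2)=1+q$ with $p,q\in\preo(\mg)$ together with W\"ormann's trick to force Archimedeanness of $\preo(\mg)$, and then conclude via a Kadison--Dubois/Becker--Schwartz style maximal-cone argument. This is correct as a proof plan, and your remark that the preordering structure is essential in the W\"ormann step (and that this is exactly why Putinar's theorem needs Archimedeanness as a hypothesis) is accurate. Note, incidentally, that Schm\"udgen's original proof was operator-theoretic rather than algebraic; the route you describe is the later simplification that has become textbook.
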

\begin{theorem}[Putinar's Positivstellensatz~\cite{Putinar:positiv}]
\label{THM:Putinar}
Assume that $\quadr(\mg)$ is Archimedean, i.e, that $R - x_1^2 - \ldots - x_n^2 \in \quadr(\mg)$ for some $R \geq 0$. We then have:
\[
    \pos(\cS(\mg)) \subseteq \quadr(\mathbf{g}).
\]
\end{theorem}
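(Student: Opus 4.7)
\textbf{Proof plan for Theorem~\ref{THM:Putinar} (Putinar).}
My plan is the standard Hahn–Banach + Haviland argument. Suppose for contradiction that $f \in \pos(\cS(\mg))$ but $f \notin \quadr(\mg)$. Since $\quadr(\mg)$ is a convex cone in the real vector space $\R[\x]$ and $\{f\}$ is a point disjoint from it, I would like to separate them by a linear functional. To do this rigorously I first introduce a suitable topology: either give $\R[\x]$ the finest locally convex topology (so that every convex set is closed in its affine hull) after verifying that $\quadr(\mg)$ is closed in this topology, or work degree-by-degree, observing that $\quadr(\mg) \cap \R[\x]_{\leq d}$ is a closed convex cone in a finite-dimensional space for each $d$ (this uses the Archimedean hypothesis to ensure a degree bound on the SOS multipliers, which is the genuinely delicate point). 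Applying the Hahn–Banach (Eidelheit) separation theorem yields a linear functional $L \colon \R[\x] \to \R$ such that $L(\sigma) \geq 0$ for every $\sigma \in \quadr(\mg)$ and $L(f) \leq 0$. Normalizing, I may assume $L(1) = 1$.

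The next step is to use the Archimedean hypothesis to show that $L$ is bounded in a strong sense. From $R - \sum_i x_i^2 \in \quadr(\mg)$ I get $L(\sum_i x_i^2) \leq R$, and by iterating the identity $2x^{k+1} = (x^k+x)^2 - x^{2k} - x^2 + \ldots$ together with $L \geq 0$ on squares, I can derive a uniform bound of the form $|L(x^\alpha)| \leq C^{|\alpha|}$, so that $L$ extends continuously to the space $C(\cube{n}_R)$ of continuous functions on the ball $\{\|\x\|^2 \leq R\} \supseteq \cS(\mg)$ via Stone–Weierstrass. Equivalently, the Archimedean condition lets me upper bound $|L(p)|$ by $\lambda \cdot \|p\|_\infty$ on any fixed degree slice.

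Once this continuous extension is in place, the Riesz representation theorem (in the form of Haviland's theorem) provides a positive Borel measure $\mu$ on $\R^n$ with $L(p) = \int p\,d\mu$ for every polynomial~$p$. The remaining task is to show $\operatorname{supp}(\mu) \subseteq \cS(\mg)$: if some $g_i$ is negative on a neighborhood $U$ intersecting $\operatorname{supp}(\mu)$, then choosing an SOS polynomial $\sigma$ supported essentially on $U$ gives $L(\sigma g_i) = \int \sigma g_i\,d\mu < 0$, contradicting $\sigma g_i \in \quadr(\mg)$. Finally, since $f > 0$ on $\cS(\mg)$ and $\mu$ is a nonzero positive measure supported there, we get $L(f) = \int f\,d\mu > 0$, contradicting $L(f) \leq 0$.

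\textbf{Main obstacle.} The conceptually routine part is the separation and the final contradiction; the real work is extracting from the single relation $R - \sum_i x_i^2 \in \quadr(\mg)$ the uniform coefficient bounds that make $L$ a bona fide moment functional of a compactly supported measure. This is the place where Archimedeanness (as opposed to mere compactness of $\cS(\mg)$, which would only give Schmüdgen) is essential, and it is also the step that does \emph{not} give effective degree bounds, which motivates the quantitative versions pursued in the remainder of the paper.
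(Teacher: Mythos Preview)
The paper does not prove Theorem~\ref{THM:Putinar}; it is quoted from~\cite{Putinar:positiv} as background. Your outline follows the standard functional-analytic route (separate by a linear functional, represent it by a measure on $\cS(\mg)$, contradict strict positivity), and the moment-bound, Riesz/Haviland, and support steps are fine. The separation step, however, is not correctly justified. You offer two options: verify that $\quadr(\mg)$ is closed in the finest locally convex topology, or that $\quadr(\mg)\cap\R[\x]_{\le d}$ is closed via an Archimedean degree bound on the multipliers. The first is false in general (convex sets are not automatically closed in that topology, and $\quadr(\mg)$ need not be closed). The second is worse: a uniform degree bound on the SOS multipliers is precisely the \emph{stability} property, and Archimedean quadratic modules are typically non-stable --- this very paper proves non-stability of $\quadr(\cube{n})$ for $n\ge 2$ (\Cref{THM:negative} and the discussion in \Cref{SEC:Discussion}). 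So that route is both false and circular relative to what you are trying to prove.

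The standard fix bypasses closedness entirely. Archimedeanness says exactly that $1$ is an \emph{algebraic interior point} of $\quadr(\mg)$: for any $p\in\R[\x]$, choose $N$ with $N\pm p\in\quadr(\mg)$, so $1\pm\tfrac1N p\in\quadr(\mg)$. Eidelheit separation then applies to a convex cone with nonempty algebraic interior and an outside point $f$, with no closedness hypothesis, yielding a nonzero $L$ with $L\ge 0$ on $\quadr(\mg)$ and $L(f)\le 0$; the interior condition also forces $L(1)>0$. After this, your remaining steps go through as written, and the ``main obstacle'' you flag (bounding $|L(\x^\alpha)|$ to get a compactly supported representing measure) is a routine induction once the separation is in place.
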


Clearly, semialgebraic sets associated with Archimedean quadratic modules are compact, but this condition is not equivalent to compactness: there exist non-Archimedean quadratic modules that define compact semialgebraic sets, see e.g. \cite[Ex. 6.3.1]{positivbook}. On the other hand, \Cref{THM:Schmudgen} shows that a preordering $\preo (\vb g)$ is Archimedean if and only if the semialgebraic set $\cS(\vb g)$ is compact.

Recently, there has been a substantial interest in proving \emph{effective} versions of the theorems above. This means to show bounds on the minimum degree $r$ so that a positive polynomial $f$ lies in the \emph{truncated} quadratic module or preordering, that are defined, using the convention that $g_0 = g_\emptyset = 1$, as: 
\begin{align}
    \label{EQ:qmodtrunc}
    \quadr(\mathbf{g})_r &= \big\{ \, \sum_{i=0}^m \sigma_{i} g_i : \sigma_i \in \Sigma[\x],~\mathrm{deg}(\sigma_ig_i) \leq r, \quad i =0,1, \dots, m \, \big\}, \\
    \label{EQ:preotrunc}
    \preo(\mathbf{g})_r &= \big\{ \, \sum_{I \subseteq [m]} \sigma_{I} g_I : \sigma_I \in \Sigma[\x],~\mathrm{deg}(\sigma_Ig_I) \leq r, \quad I \subseteq [m] \, \big\}.
\end{align}
Such bounds have immediate implications for the convergence rate of the celebrated moment-SOS hierarchy~\cite{Lasserre:lowbound, Parillo:thesis} for polynomial optimization (see also \Cref{SEC:polyopt}). The Putinar-type representations are of particular interest, as their corresponding hierarchy leads to bounds which may be computed by solving a semidefinite program of polynomial size in the number of variables $n$ and the number of contraints~$m$. 

\subsection{Our contributions} In this paper, we consider the fundamental special case of the hypercube $[-1, 1]^n$, which can be defined as a semialgebraic set by the inequalities $g_i(\x) = 1-x_i^2 \geq 0$, $i=1,2,\ldots,n$. The associated quadratic module $\quadr(\mg) = \quadr (1-x_1^2, \dots , 1-x_n^2)$ is Archimedean,
as it contains $n - x_1^2 - \dots - x_n^2$, 
and Putinar's Positivstellensatz thus applies in this setting. In this paper we prove an upper bound and a lower bound on the degree required for representations of positive polynomials on the hypercube as elements of the quadratic module~$\quadr(\mg)$.

\begin{theorem}[Upper degree bound]\label{THM:main}
Let $f \in \pos([-1, 1]^n)$ be a polynomial of degree~$d$. Denote by $\funcmax$, $\funcmin$ the maximum and the minimum of $f$ on $[-1, 1]^n$, respectively. Then there exists an absolute constant $\Chebyconst > 0$ such that:
\[
    f \in \quadr(1-x_1^2, \ldots, 1-x_n^2)_{rn} \quad \text{whenever} \quad r \geq     
    4 \Chebyconst \cdot d^2(\log n) \cdot \frac{\funcmax}{\funcmin} + O\left(\frac{\funcmax}{\funcmin}\right)^{1/2}.
\]
\end{theorem}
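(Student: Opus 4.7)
The plan is to reduce the multivariate statement on $\cube{n}$ to a careful univariate analysis on $[-1,1]$, and then lift back to $n$ dimensions via a divide-and-conquer argument that incurs only a $\log n$ factor. After normalizing so that $\funcmin = 1$ and setting $\rho := \funcmax/\funcmin$, the task becomes to certify $f \in \quadr(1-x_1^2, \ldots, 1-x_n^2)_{rn}$ with $r$ roughly $d^2 (\log n)\, \rho$.

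The core univariate ingredient, from which the constant $\Chebyconst$ presumably inherits its name, would assert that any polynomial $p \in \pos([-1,1])$ of degree $d$ with $p_{\min} \geq 1$ and $p_{\max} \leq \rho$ belongs to $\quadr(1-x^2)_{c d^2 \rho}$ for an absolute constant $c$. The natural strategy is to start from the Chebyshev expansion $p(x) = \sum_{k=0}^d \hat p_k \Cheby_k(x)$ and exploit positivity (not just boundedness) of $p$ to produce an explicit splitting $p = \sigma_0 + (1-x^2)\sigma_1$. The degree bound should rest on Markov's inequality $\|\Cheby_d'\|_\infty \leq d^2$ to control the approximation error, while the lower bound $p \geq 1$ permits absorbing that error with only \emph{linear} (rather than quadratic) dependence on $\rho$.

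To lift to the hypercube, I would proceed by induction on $n$, splitting the variable set into two halves $[n] = A \sqcup B$ of roughly equal size. Expanding $f(\x_A, \x_B) = \sum_\alpha \Cheby_\alpha(\x_A)\, q_\alpha(\x_B)$ in a tensor Chebyshev basis over $A$, one views $f$ as a blockwise polynomial and combines the univariate lemma (applied in $\x_A$ with coefficients in $\R[\x_B]$) with the inductive hypothesis on $\x_B$. Each recursion level contributes only a constant factor in the degree, yielding $\log_2 n$ levels and the desired $(\log n)\, d^2 \rho$ scaling; the outer factor $n$ in the truncation degree $rn$ reflects that tensor products accumulate the full variable count, and that the univariate lemma must be applied with coefficients in $\R[\x_B]$ whose sup-norms over $\cube{|B|}$ must be controlled in terms of~$\funcmax$.

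The chief obstacle is proving the \emph{tight linear} (rather than quadratic) dependence on $\rho$ in the univariate lemma: a naive conversion of a Chebyshev expansion into a Putinar representation loses a factor of $\rho$ because the Chebyshev coefficients of $p$ themselves are of order $\rho$, and absorbing the resulting remainder into $\sigma_0$ naively costs another $\rho$. Resolving this requires separating the ``signal'' $p_{\min}$ from the ``fluctuation'' $p - p_{\min}$ and applying the Chebyshev construction only to the latter, so that the sum-of-squares term $\sigma_0$ inherits the lower bound $p_{\min} = 1$ as slack. The additive $O(\sqrt{\rho})$ correction, dominant when $\rho$ is close to~$1$, should come from an auxiliary effective Putinar bound (such as the Baldi--Mourrain estimate) invoked as a fallback, and is absorbed into the main term once $\rho$ is large.
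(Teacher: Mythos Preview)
Your proposal misidentifies where the difficulty lies. In dimension one the quadratic module $\quadr(1-x^2)$ coincides with the preordering, and the classical Markov--Luk\'acs theorem already gives $p \in \quadr(1-x^2)_{\deg p}$ for every $p \in \pos([-1,1])$; there is no $\rho$-dependence at all in the univariate case. The genuine obstruction appears only when $n \geq 2$, precisely because products such as $(1-x_1^2)(1-x_2^2)$ fail to lie in $\quadr(\cube{n})$ of bounded degree (this is the content of the lower bound in the paper). Your divide-and-conquer step has no mechanism to avoid exactly those products: tensoring representations in $\quadr(1-x_i^2 : i \in A)$ with representations in $\quadr(1-x_j^2 : j \in B)$ produces cross-terms of the form $(1-x_i^2)(1-x_j^2)\sigma$, which live in the preordering but not the quadratic module. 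Running the univariate lemma ``with coefficients in $\R[\x_B]$'' does not help, since a sum of squares over $\R[\x_B][\x_A]$ is not a sum of squares over $\R[\x_A,\x_B]$. Finally, the $O(\sqrt{\rho})$ term cannot come from Baldi--Mourrain, which gives $O(\rho^{10})$.

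The paper's route is entirely different. One enlarges the cube to $[-\eta,\eta]^n$ with $\eta = 1+\epsilon$ chosen (via Markov's inequality) so that $f \geq \frac{1}{2}\funcmin$ there; this forces $\epsilon \approx \funcmin/(d^2\funcmax)$, which is where the $d^2\rho$ factor originates. On the enlarged cube one applies the known Schm\"udgen bound for the \emph{preordering} $\preo(\eta^2 - x_i^2)$ at cost $O(\sqrt{\rho})$, and then lifts to $\quadr(1-x_i^2)$ through the single-generator module $\quadr(n-\|\x\|_{2q}^{2q})$, using the explicit identities $\eta^2 - x_i^2 \in \quadr(n - \|\x\|_{2q}^{2q})_{2q}$ and $n - \|\x\|_{2q}^{2q} \in \quadr(1-x_i^2)_{2q}$ with $\eta = n^{1/(2q)}$. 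The $\log n$ factor arises from the requirement $n^{1/(2q)} \leq 1 + \epsilon$, i.e.\ $2q \gtrsim (\log n)/\epsilon$, not from a recursion depth.
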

We give a precise expression for the term $O(\sqrt{\funcmax/\funcmin})$ in \Cref{THM:main2}. See also \Cref{SEC:Discussion} for a related discussion. 
\begin{theorem}[Lower degree bound] \label{THM:negative}
    Let $n \geq 2$. For any $\epsilon > 0$ and $r \in \N$, we have:
    \[
    (1-x_1^2)(1-x_2^2) + \epsilon \in \quadr(1-x_1^2, \ldots, 1-x_n^2)_r \implies r = \Omega(1/\sqrt[8]{\epsilon}).
    \]
\end{theorem}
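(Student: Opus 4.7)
The plan is to first reduce to the case $n=2$. Given any Putinar representation $(1-x_1^2)(1-x_2^2)+\epsilon=\sigma_0+\sum_{i=1}^n \sigma_i(1-x_i^2)$ of degree $\leq r$ in $n$ variables, restricting to the slice $x_3=\cdots=x_n=0$ yields a representation of the same degree in two variables: restriction preserves SOS, and for $i\geq 3$ the polynomial $1-x_i^2$ becomes the constant $1$, which can be absorbed into the $\sigma_0$ term. Hence it suffices to prove the lower bound for $n=2$.

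For $n=2$, by LP duality applied to the truncated quadratic module, it suffices to exhibit a linear functional $L\colon \R[x_1,x_2]_{\leq r}\to \R$ with $L(1)=1$, nonnegative on SOS of degree $\leq r$ and on the localizers $\sigma_i(1-x_i^2)$ for SOS $\sigma_i$ of degree $\leq r-2$ ($i=1,2$), such that $L\big((1-x_1^2)(1-x_2^2)\big)<-\epsilon$. The target is therefore to construct such an $L$ with $L\big((1-x_1^2)(1-x_2^2)\big)\leq -c/r^8$ for an absolute constant $c>0$, which by duality gives $r=\Omega(\epsilon^{-1/8})$. The core analytic input will be the tensor-product Christoffel-type inequality for the Jacobi weight $v(x_1,x_2)=(1-x_1^2)(1-x_2^2)$ on $[-1,1]^2$: for every polynomial $p$ of total degree $\leq N$,
\[
  p(1,1)^2 \;\leq\; C\,N^8 \int_{[-1,1]^2} p(x_1,x_2)^2\,(1-x_1^2)(1-x_2^2)\, dx_1\, dx_2,
\]
which tensorizes the classical 1D Jacobi Christoffel bound $p(1)^2\leq C'N^4\int_{-1}^1 p(x)^2(1-x^2)\,dx$ (reflecting the asymptotic $\Lambda_N(1;1-x^2)=\Theta(N^{-4})$). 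The exponent $8$ that appears at the corner $(1,1)$ is precisely the one producing the $\epsilon^{-1/8}$ bound once we set $N\asymp r$.

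I would then construct $L$ by combining an evaluation-type contribution concentrated near the four corners $(\pm 1,\pm 1)$---which on its own makes $L((1-x_1^2)(1-x_2^2))$ vanish---with a compensating negative integral built from the extremal polynomial $\phi^*$ of degree $\lfloor r/2\rfloor$ attaining the Christoffel minimum at $(1,1)$, roughly of the form $-\gamma\,(\phi^*)^2 v(x)\,dx$, and tune the coefficient $\gamma$ so that all positivity constraints of the truncated dual cone are preserved. The total contribution to $L((1-x_1^2)(1-x_2^2))$ from the compensating term is then essentially $-\gamma/K_N((1,1),(1,1))^{-1}\asymp -1/r^8$, which is the desired gap.

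The hard part will be to verify the SOS and localizing positivity of $L$ simultaneously: a naive pure Dirac-at-corners construction is destroyed by SOS polynomials that vanish at all four corners but have positive mass against the compensating weight, so the pointwise part of $L$ must plausibly be smoothed using the Christoffel-Darboux kernel $K_N((1,1),\cdot)$ (and its counterparts at the other corners) rather than raw Diracs. The two localizing conditions $L(\sigma_i(1-x_i^2))\geq 0$ in particular will require Markov-Bernstein-type estimates adapted to the Jacobi orthogonal polynomials for the factor $(1-x_i^2)v$, ensuring that the signed density defining $L$ remains a valid pseudo-moment of order $r-2$ after multiplication by $1-x_i^2$.
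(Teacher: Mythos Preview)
Your reduction to $n=2$ is correct and matches the paper exactly.

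For $n=2$, however, your proposal is not a proof but a plan, and its central step is left open. You correctly identify that a dual functional $L$ with $L\big((1-x_1^2)(1-x_2^2)\big)\lesssim -1/r^8$ would suffice, and the heuristic that the tensorized Jacobi Christoffel function at the corner scales like $r^{-8}$ is sound. But you explicitly flag the hard part---making $L$ simultaneously nonnegative on $\Sigma[\x]_r$ \emph{and} on both localizers $(1-x_i^2)\Sigma[\x]_{r-2}$---and then do not resolve it. The phrases ``must plausibly be smoothed'' and ``will require Markov--Bernstein-type estimates'' are placeholders, not arguments. In particular, the naive signed measure $\delta_{(1,1)} - \gamma(\phi^*)^2 v\,dx$ you describe is \emph{not} nonnegative on $\Sigma[\x]_r$ (as you note), and the Christoffel-kernel smoothing you gesture at does not obviously repair both the SOS and the two localizing constraints at once; this is where the real work lies, and it is absent.

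The paper takes a completely different, primal route that avoids any dual construction. Starting from a hypothetical decomposition $(1-x^2)(1-y^2)+\epsilon = \sigma_0 + (1-x^2)\sigma_1 + (1-y^2)\sigma_2$ of degree $r$, it bounds $\sigma_1$ directly near the corner $(1,1)$ using only the Markov brothers' inequality and elementary Chebyshev estimates: first $\sigma_1(1,1)\leq \tfrac12\epsilon r^2$, then a sup-bound on $\sigma_1$ on a slightly enlarged box, then bounds on the first and second derivatives of $t\mapsto\sigma_1(1+t,1-t)$ via Taylor expansion. Evaluating the decomposition at $(1+\sqrt\epsilon,\,1-\sqrt\epsilon)$ forces $\sigma_1(1+\sqrt\epsilon,1-\sqrt\epsilon)\gtrsim\sqrt\epsilon$, which combined with the Taylor bounds yields $r=\Omega(\epsilon^{-1/8})$. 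This argument is short, elementary, and closely follows Stengle's one-variable method; your dual approach, if it can be made to work, would be conceptually interesting but is substantially more delicate and is not carried out here.
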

Note that the function $f(\x) = (1-x_1^2)(1-x_2^2) + \epsilon$ satisfies
$\funcmin = \epsilon$ and ${\funcmax = 1+\epsilon}$.
We could therefore replace $\Omega(1/\sqrt[8]{\epsilon})$ in \Cref{THM:negative} by $\Omega( \sqrt[8]{\funcmax / \funcmin})$.

The same asymptotic results of \Cref{THM:main} and \Cref{THM:negative} hold if we use $1 \pm x_i$, $i = 1, \dots , n$ (another set of standard inequalities defining $[-1, 1]^n$) instead of $1-x_i^2$, see \Cref{SEC:Discussion}.

\medskip\noindent\textbf{Outline.}
The paper is structured as follows. In \Cref{SEC:literature}, we discuss the existing literature on effective Archimedean Positivstellens\"atze and their applications to polynomial optimization. We give detailed versions of our main results and explain their relations to prior works.
In \Cref{SEC:prelim}, we cover some preliminaries, particularly on approximation theory. In \Cref{SEC:positive}, we prove our upper degree bound, \Cref{THM:main}. In \Cref{SEC:negative}, we prove the lower degree bound, \Cref{THM:negative}. We conclude in \Cref{SEC:Discussion} by discussing possible future research directions. \Cref{app::degree_shift} is dedicated to the presentation of explicit polynomial identities exploited in \Cref{SEC:positive}.

\section{Related works and applications}
\label{SEC:literature}
In this section, we explain the relation of our main results to the existing literature and their applications. In particular, we focus on existing effective Archimedean Positivstellens\"atze, for general $\vb g$ and specific for the hypercube. Degree bounds for these theorems are usually stated in terms of a parameter of the form $\|f\| / f_{\min, \cS(\mg)}$, whose inverse intuitively measures how close $f$ is to having a zero on $\cS(\mg)$. Here, $f_{\min, \cS(\mg)} = \min_{\x \in \cS(\mg)} f(\x)$, and $\|\cdot\|$ is a norm on $\R[\x]_{\le d}$. Common choices include the supremum norm on $\cS(\mg)$ (or on a compact domain containing $\cS(\mg)$), denoted $f_{\max, \cS(\vb g)}$, and the coefficient norm $\| \cdot \|_{\rm coef}$, defined in terms of the monomial expansion $f(\x) = \sum_{\alpha} f_\alpha \x^\alpha$ as $ \|f\|_{\rm coef} = \max_{\alpha} |f_\alpha| \cdot \frac{\prod_{i}(\alpha_i!)}{(\sum_i \alpha_i)!}$. For fixed number of variables $n$ and degree $d$ of~$f$, these choices are equivalent.

\subsection{General effective Positivstellens\"atze}

For general constraints $\mg$ that define a compact semialgebraic set $\cS(\mg)$, Schweighofer~\cite{Schweighofer:schmudgen} showed in a seminal work that any positive polynomial on $\cS(\mg)$ has a representation in the preordering $\preo(\mg)_r$ truncated at degree 
\[
r \ge O\left(\frac{\norm{f}_{\rm coef}}{f_{\min, \cS(\mg)}}\right)^{c},
\]
where $c > 0$ is a (possibly large) constant depending on $\mg$.
{
Here and in the following, the $O(\, \cdot \, )$ notation should be understood in the following way: there exists a constant $a = a(n,d, \vb g)$, depending on the number of variables $n$, the degree $d$ and on the inequalities $\vb g$, such that the \emph{smallest} $r$ with the property that $f \in \quadr(\vb g)_r$ for all $n$-variate polynomials $f$ of degree $d$, positive on $\cS(\vb g)$ and such that $\frac{\norm{f}_{\rm coef}}{f_{\min, \cS(\vb g)}}$ is big enough, grows at most as $a \cdot \left(\frac{\norm{f}_{\rm coef}}{f_{\min, \cS(\mg)}}\right)^c$. This implies also that $f \in \quadr(\vb g)_r$ for all $r \ge a \cdot \left(\frac{\norm{f}_{\rm coef}}{f_{\min, \cS(\mg)}}\right)^c$
.}

For the quadratic module, Nie \& Schweighofer~\cite{NieSchweighofer:putinar} showed a degree bound for Archimedean $\quadr (\mg)$ with exponential dependence on $\|f\|_{\rm coef}/\funcmin$.
This result was only recently improved in~\cite{BaldiMourrain:putinar,BaldiMourrainParusinski:putinar} to match Schweighofer's polynomial bound for the preordering (although the exponent $c$ may differ).
\begin{theorem}[{\cite[Cor.~3.3]{BaldiMourrainParusinski:putinar}}]\label{THM:GeneralPutinar}
    Let $\quadr (\mg)$ be an Archimedean quadratic module, and let $f$ be a polynomial of degree $d$ positive on $\cS(\mg)$. Then we have, for fixed $n$ and~$d$,
    \[
        f \in \quadr(\mg)_r \quad \text{ for } \quad r \geq O\bigg (\frac{f_{\max,\D}}{f_{\min, \cS(\vb g)}}\bigg)^{7 \text{Ł} + 3}
    \]
    where $\D$ is a scaled simplex containing $\cS(\vb g)$ and $\text{Ł}= \text{Ł}(\vb g)$ is a constant (called Łojasiewicz exponent) depending only on $\vb g$.
\end{theorem}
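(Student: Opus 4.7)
The plan is to reduce the statement to an effective Schmüdgen-type Positivstellensatz and then convert the resulting preordering representation into a quadratic module representation, at a degree cost controlled by the Łojasiewicz exponent $\text{Ł}(\vb g)$.

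First I would invoke Schweighofer's effective Schmüdgen theorem to obtain, for some constant $c = c(\vb g)$, a preordering representation
\[
f = \sum_{I \subseteq [m]} \sigma_I g_I, \qquad \sigma_I \in \Sigma[\x], \qquad \deg(\sigma_I g_I) \leq r_1 = O\left(\frac{f_{\max,\D}}{\funcmin}\right)^{c}.
\]
The remaining task is to rewrite each term $\sigma_I g_I$ as an element of $\quadr(\vb g)$ with a degree blow-up bounded by a factor depending polynomially on $\text{Ł}(\vb g)$. To this end, I would establish a quantitative Łojasiewicz inequality of the form
\[
\mathrm{dist}(\x, \cS(\vb g))^{\text{Ł}} \leq c_0 \cdot \max_i |\min(g_i(\x), 0)| \qquad \text{for all } \x \in \D,
\]
which quantifies how the $g_i$ control the distance to $\cS(\vb g)$, and hence how products of the $g_i$ near the boundary can be dominated by individual $g_i$ multiplied by appropriate sums of squares.

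The main technical step is, for each $I \subseteq [m]$, to produce an explicit identity $N - g_I = \tau_I + \sum_{i=1}^m \rho_{I,i} \cdot g_i$ with $\tau_I, \rho_{I,i} \in \Sigma[\x]$ and $N$ a suitable constant, whose degrees are polynomial in $\text{Ł}$ and in $\deg g_I$. Substituting back yields $\sigma_I g_I = N\sigma_I - \sigma_I \tau_I - \sum_i \sigma_I \rho_{I,i} g_i$, converting preordering elements into quadratic module elements. Summing over $I$ and absorbing the error by a perturbation $f \rightsquigarrow f - \funcmin/2$ would then give $f \in \quadr(\vb g)_r$ with $r$ of order $(f_{\max,\D}/\funcmin)^{7\text{Ł}+3}$, the exponent arising from compounding Schweighofer's exponent with the Łojasiewicz-based conversion cost.

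The hard part, I expect, is producing these conversion identities while keeping the degree cost polynomial in $\text{Ł}$. A naive induction on $|I|$ fails because the sign conditions required for membership in $\quadr(\vb g)$ are not automatic from algebraic identities like $g_i g_j = \tfrac12((g_i+g_j)^2 - g_i^2 - g_j^2)$. One must invoke the Łojasiewicz inequality to guarantee that, after splitting $g_I$, the residual terms can be absorbed into sum-of-squares multipliers of individual $g_i$'s, with a controlled degree penalty per split. Iterating this across all $I$ and tracking the compounded degree against the polynomial exponent from Schmüdgen is the delicate quantitative heart of the argument, and I would follow the strategy of \cite{BaldiMourrainParusinski:putinar} at this step.
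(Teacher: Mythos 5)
This theorem is not proved in the paper: it is quoted from \cite{BaldiMourrainParusinski:putinar}, and the paper only recalls the underlying strategy at the start of \Cref{SEC:positive} — construct a perturbation $p \in \quadr(\mg)$ so that $f-p>0$ on a \emph{simple} outer domain $\D \supseteq \cS(\mg)$ (a scaled box or simplex), apply an effective Schm\"udgen/Handelman bound on $\D$, and return to $\quadr(\mg)$ because the few constraints defining $\D$ lie in $\quadr(\mg)$ at a fixed degree by Archimedeanness. Your route is genuinely different: you apply Schweighofer's theorem on $\cS(\mg)$ itself and then try to convert each preordering term $\sigma_I g_I$ into the quadratic module. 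That conversion step is where the proposal breaks down. From $N-g_I=\tau_I+\sum_i\rho_{I,i}g_i$ you obtain $\sigma_I g_I = N\sigma_I-\sigma_I\tau_I-\sum_i\sigma_I\rho_{I,i}g_i$, and the subtracted terms ($-\sigma_I\tau_I$, $-\sigma_I\rho_{I,i}g_i$) are \emph{not} elements of $\quadr(\mg)$: the quadratic module is not closed under negation, so this identity does not place $\sigma_I g_I$, nor $f=\sum_I\sigma_I g_I$, in $\quadr(\mg)$. The "absorption by $f\rightsquigarrow f-\funcmin/2$" cannot repair this, because the subtracted terms are of the same size as the representation itself, not $O(\funcmin)$.

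More fundamentally, any term-by-term conversion whose degree cost depends only on $\mg$ (polynomially in Ł and $\deg g_I$, as you propose) is ruled out by this very paper: \Cref{PROP:negative} shows that $(1-x^2)(1-y^2)+\epsilon$ lies in $\preo(\cube{2})_4$ for every $\epsilon>0$ yet requires degree $\Omega(1/\sqrt[8]{\epsilon})$ in $\quadr(\cube{2})$. Applying your conversion to this trivial degree-$4$ Schm\"udgen certificate would give membership in $\quadr(\cube{2})$ at a degree independent of $\epsilon$, a contradiction. So the dependence on $f$ (through $\funcmin$) must enter precisely in the passage from preordering-type to module-type certificates — the step you leave vague. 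The cited proof avoids the issue by never producing a Schm\"udgen certificate over $\cS(\mg)$: the Łojasiewicz inequality (essentially in the form you state) is used to build the penalty $p\in\quadr(\mg)$ so that $f-p$ remains positive on the outer domain $\D$ and to control $(f-p)_{\max,\D}/(f-p)_{\min,\D}$, while the preordering-to-module conversion is only needed for the fixed constraints defining $\D$, where it costs a constant degree. If you want to salvage your outline, you would have to replace the term-by-term rewriting by such a perturbation argument, i.e., follow the Nie--Schweighofer/Baldi--Mourrain--Parusi\'nski scheme rather than post-processing a Schm\"udgen certificate on $\cS(\mg)$.
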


The Łojasiewicz exponent can be large even when the number of variables $n$ and the degrees $\deg g_1, \dots, \deg g_m$ of the constraints are fixed, see~\cite{BaldiMourrain:putinar,BaldiMourrainParusinski:putinar}.
However, in regular cases, namely when the constraints $\mg$ satisfy the Constraint Qualification Conditions (CQC), one has $\L = 1$.
\begin{definition}[CQC]\label{def::CQC}
We say that a tuple of polynomials $\vb g$ satisfies the constraint qualification conditions if, for every $\x \in \cS(\vb g)$, the gradients of the active constraints at $\x$:
\[
    {\{\grad g(\x) : g \in \mg,~g(\x) = 0\}}
\]
are linearly independent (in particular, nonzero).
\end{definition}

\begin{corollary}[{\cite[Thm.~2.11]{BaldiMourrain:putinar}, \cite[Thm.~2.10 and Cor.~3.4]{BaldiMourrainParusinski:putinar}}]
\label{COR:GeneralPutinar}
        Let $\quadr (\mg)$ be an Archimedean quadratic module, and let $f$ be a polynomial of degree $d$ positive on~$\cS(\mg)$. Assume that $\vb g$ satisfies the CQC. Then we can take $Ł = 1$ in \Cref{THM:GeneralPutinar}, and thus, for fixed $n$ and $d$,
    \[
        f \in \quadr(\mg)_r \quad \text{ for } \quad r \geq O\bigg (\frac{f_{\max,\D}}{f_{\min, \cS(\vb g)}}\bigg)^{10}
    \]
    where $\D$ is a scaled simplex containing $\cS(\vb g)$.
\end{corollary}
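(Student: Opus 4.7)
The plan is to reduce the corollary to \Cref{THM:GeneralPutinar} by showing that $\text{Ł}(\vb g) = 1$ whenever $\vb g$ satisfies the CQC; substituting $\text{Ł} = 1$ into the exponent $7\text{Ł}+3$ of \Cref{THM:GeneralPutinar} then immediately produces the claimed exponent $10$.

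I would first make precise the way $\text{Ł}$ enters the quantitative Putinar argument of \cite{BaldiMourrain:putinar,BaldiMourrainParusinski:putinar}: it is the smallest exponent for which a Łojasiewicz-type inequality of the form
\[
    \operatorname{dist}(\x, \cS(\vb g)) \;\leq\; C \cdot \Bigl(\max_i (-g_i(\x))_+\Bigr)^{1/\text{Ł}}
\]
holds on a neighborhood of $\cS(\vb g)$. Inside the proof of \Cref{THM:GeneralPutinar} one perturbs $f$ into a polynomial strictly positive on a slightly enlarged semialgebraic set; the cost of this perturbation, and hence the exponent appearing in the degree bound, is controlled precisely by $\text{Ł}$.

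The heart of the argument is therefore to show that CQC implies the linear Łojasiewicz estimate, i.e.\ $\text{Ł}=1$. Fix $\x^* \in \partial \cS(\vb g)$ with active index set $I = \{i : g_i(\x^*) = 0\}$. Under CQC the gradients $\{\nabla g_i(\x^*)\}_{i \in I}$ are linearly independent, so the restricted map $\vb g_I = (g_i)_{i \in I}$ has surjective differential at $\x^*$. By Lyusternik's theorem (equivalently, the constant rank / implicit function theorem) there exist a neighborhood $U_{\x^*}$ and a constant $C_{\x^*}$ such that every $\x \in U_{\x^*}$ admits a point $\y$ with $g_i(\y) \geq 0$ for all $i \in I$ and $\|\x-\y\| \leq C_{\x^*}\, \max_{i \in I}(-g_i(\x))_+$. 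Shrinking $U_{\x^*}$ ensures that $g_j(\y) > 0$ for all inactive $j \notin I$, so $\y \in \cS(\vb g)$. A compactness argument on $\partial \cS(\vb g)$, applied to the stratification of the boundary by the active-set function $\x^* \mapsto I(\x^*)$, then patches the local constants $C_{\x^*}$ into a single global $C$, giving the desired linear inequality.

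The main obstacle is precisely this last step: turning the pointwise linear-algebra condition CQC into a uniform global distance estimate is where semialgebraic geometry genuinely enters, since one has to keep the constants under control as the active set changes across strata of $\partial \cS(\vb g)$. Once $\text{Ł} = 1$ is established, the corollary is a direct substitution into \Cref{THM:GeneralPutinar}.
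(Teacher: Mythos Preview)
Your proposal is correct and follows exactly the route the paper takes: the paper does not give an independent proof of \Cref{COR:GeneralPutinar} but simply records it as the specialization of \Cref{THM:GeneralPutinar} obtained by invoking the cited results \cite[Thm.~2.11]{BaldiMourrain:putinar}, \cite[Thm.~2.10 and Cor.~3.4]{BaldiMourrainParusinski:putinar}, which establish that $\text{Ł}(\vb g)=1$ under the CQC, and then substituting $7\cdot 1+3=10$ for the exponent. Your sketch of why CQC forces the linear {\L}ojasiewicz inequality (metric regularity via Lyusternik/implicit function theorem at each boundary point, followed by a compactness argument over the active-set stratification) is a faithful outline of the argument in those references, which the present paper does not reproduce.
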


\subsection{Specialized effective Positivstellens\"atze}
If we restrict to certain fundamental special cases, stronger bounds are known. When $\cS(\mg)$ is the
hypersphere~\cite{FangFawzi:sphere}, the hypercube~\cite{LaurentSlot:hypercube}, the unit ball~\cite{Slot:CD},
or the standard simplex~\cite{Slot:CD}, we have representations of degree $r = O(\sqrt{f_{\max,\cS(\vb g)} / f_{\min, \cS(\vb g)}})$ in the preordering.
For the hypersphere and unit ball, this bound carries over to the quadratic module
(which, in those cases, is equal to the preordering). However, despite the research effort (see, e.g. \Cref{THM:magron} below), no specialized bounds on the minimum degree required for a representation in the quadratic module are known for the hypercube and the standard simplex. In this paper we start filling this gap, providing the first dedicated analysis for the quadratic module of the hypercube.

\subsection{Effective Positivstellens\"atze for the hypercube}
The unit hypercube $\cube{n} := [-1, 1]^n$ is a compact semialgebraic set that is naturally defined as:
\begin{equation} \label{EQ:cuberepr}
    \cube{n} = \{ \x \in \R^n : g_i(\x) \geq 0,~ 1 \leq i \leq n\}, \quad g_i(\x) := 1-x_i^2.
\end{equation}
Throughout the article, we abuse notation and refer to the quadratic module and preordering generated by $1-x_1^2, \dots ,1-x_n^2$ as:
\begin{align*}
\quadr(\cube{n}) &\coloneqq \quadr(1-x_1^2, \dots 1-x_n^2), \\
\preo(\cube{n}) &\coloneqq \preo(1-x_1^2, \dots 1-x_n^2),
\end{align*}
and we denote their truncations  (see \cref{EQ:qmodtrunc} and \cref{EQ:preotrunc}) as $\quadr(\cube{n})_r$ and $\preo(\cube{n})_r$, respectively.

Despite its simplicity, the best available effective version of Putinar's Positivstellensatz for $\cube{n}$ is the general result of~\cite{BaldiMourrainParusinski:putinar}. Indeed, since the constraints $\mg$ in~\cref{EQ:cuberepr} satisfy the CQC, \Cref{COR:GeneralPutinar} gives a degree bound of the order $O((\funcmax/\funcmin)^{10})$.
On the contrary, for Schm\"udgen Positivstellensatz, specialized results are available, and a much stronger bound of the order $O(\sqrt{\funcmax/\funcmin})$ is known. 
\begin{theorem}[{\cite[Cor.~3]{LaurentSlot:hypercube}}]
\label{THM:boxSchmudgen}
Let $f \in \pos(\cube{n})$ be a polynomial of degree $d$, and let $f_{\min}, f_{\max} > 0$ be the minimum and maximum of $f$ on $\cube{n}$, respectively. Then:
\[
    f \in \preo(\cube{n})_{(r+1)n}, \quad \text{for } \quad r \geq \max \bigg\{
     \left( {C(n, d)} \cdot {\frac{\funcmax}{\funcmin}} \right)^{1/2}, ~ \pi d \sqrt{2n} \bigg\}.
\]
Here, $C(n,d)$ is a constant depending polynomially on $n$ (for fixed $d$), and polynomially on $d$ (for fixed $n$). 
\end{theorem}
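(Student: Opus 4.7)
The plan is a Chebyshev/Jackson kernel approximation argument that exploits the product structure of $\cube{n} = [-1,1]^n$, together with the univariate Markov--Lukacs representation of polynomials nonnegative on $[-1,1]$. I fix the product Chebyshev measure $d\mu(\x) = \prod_{i=1}^{n} \frac{dx_i}{\pi \sqrt{1-x_i^2}}$ and a univariate Jackson-type nonnegative polynomial kernel $k_r(x,y) \geq 0$ of degree $r$ in each variable, which reproduces polynomials of degree $\leq d$ up to an error of order $d^2/r^2$ in sup norm. I then tensorize it as $K_r(\x,\y) = \prod_{i=1}^n k_r(x_i,y_i)$ and define
\[
    \tilde f(\x) \; := \; \int_{\cube{n}} K_r(\x,\y)\, f(\y)\, d\mu(\y).
\]

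The first step is the approximation estimate. A standard univariate Jackson-type bound, tensorized over the $n$ coordinates, yields $\max_{\x \in \cube{n}} |f(\x) - \tilde f(\x)| \leq C(n,d)\, \funcmax / r^{2}$ with $C(n,d)$ polynomial in $n$ for fixed $d$ and polynomial in $d$ for fixed $n$; the threshold $r \geq \pi d \sqrt{2 n}$ is exactly what is needed to keep the estimate uniform in $\x$ when one expands $f$ in a tensorized Chebyshev basis of multidegree bounded by $d$ in each coordinate.

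The second step is the preordering representation of $\tilde f$. For fixed $\y$ and fixed coordinate $i$, the univariate polynomial $x_i \mapsto k_r(x_i,y_i)$ is nonnegative on $[-1,1]$ of degree $\leq r$, so by Markov--Lukacs it decomposes as $\alpha_i(x_i;\y) + (1-x_i^2)\beta_i(x_i;\y)$ with $\alpha_i, \beta_i \in \Sigma[x_i]$ of degree $\leq r+1$. Expanding the tensor product $\prod_i k_r(x_i,y_i)$ and then integrating against the nonnegative measure $f(\y)\, d\mu(\y)$ yields
\[
    \tilde f(\x) \; = \; \sum_{I \subseteq [n]} \sigma_I(\x)\, \prod_{i \in I}(1-x_i^2), \qquad \sigma_I \in \Sigma[\x],
\]
with $\deg\bigl(\sigma_I \prod_{i \in I}(1-x_i^2)\bigr) \leq (r+1) n$. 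Hence $\tilde f \in \preo(\cube{n})_{(r+1)n}$.

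The third step is to absorb the remainder $f - \tilde f$ into the preordering. Once $r^2 \geq 2 C(n,d)\, \funcmax / \funcmin$, the first step gives $\|f - \tilde f\|_{\infty,\cube{n}} \leq \funcmin / 2$. One then writes $f = \tilde f + (f - \tilde f)$ and certifies the small perturbation by a tensorized Markov--Lukacs decomposition applied directly to $f - \tilde f + \funcmin/2$, a polynomial of degree $d$ that is bounded between $0$ and $\funcmin$ on $\cube{n}$ (the positivity reserve $\funcmin/2$ is then balanced against the negative part coming from $\tilde f$). This yields the full certificate $f \in \preo(\cube{n})_{(r+1)n}$ as soon as $r$ satisfies both stated lower bounds, with the two thresholds reflecting respectively the positivity requirement ($r \gtrsim \sqrt{C(n,d) \funcmax / \funcmin}$) and the uniform reproduction requirement ($r \geq \pi d\sqrt{2n}$).

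The main obstacle is the quantitatively sharp bookkeeping in steps one and two. In step two, iterating the univariate Markov--Lukacs step across the $n$ coordinates roughly doubles the SOS degree in each coordinate, and the expansion must be arranged so that the total degree of $\sigma_I \prod_{i \in I}(1 - x_i^2)$ stays at $(r+1)n$ and not some larger multiple. In step one, ensuring that $C(n,d)$ depends \emph{polynomially} on $n$ (for fixed $d$) and on $d$ (for fixed $n$) requires a precise Jackson-type inequality on $\cube{n}$ against the product Chebyshev measure, with the sharp $d^2$ dependence and the $\pi d\sqrt{2n}$ threshold extracted from tracking Markov--Bernstein constants coordinatewise.
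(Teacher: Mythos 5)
First, note that the paper does not prove \Cref{THM:boxSchmudgen} at all: it is quoted (in slightly weakened form) from \cite[Cor.~3]{LaurentSlot:hypercube}, so the comparison here is with the proof in that reference. Your overall strategy --- tensorized Jackson-type kernels in the Chebyshev basis, the univariate Markov--Luk\'acs decomposition of each kernel factor $x_i \mapsto k_r(x_i,y_i)$ as $\alpha_i + (1-x_i^2)\beta_i$, expanding the product to get $\preo(\cube{n})_{(r+1)n}$ membership, and integrating against a nonnegative measure (using closedness of the truncated SOS cone) --- is essentially the polynomial kernel method used in the original proof, and your Steps~1 and~2 are sound in outline, including the degree bookkeeping $(r+1)n$.

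The genuine gap is in Step~3. Having set $\tilde f = \int K_r(\cdot,\y) f(\y)\,d\mu(\y)$, you are left with the remainder $f - \tilde f$, and you propose to certify $f - \tilde f + \funcmin/2$ by ``a tensorized Markov--Luk\'acs decomposition.'' No such decomposition exists: Markov--Luk\'acs is strictly univariate, and $f - \tilde f + \funcmin/2$ is a general multivariate polynomial that is merely positive on $\cube{n}$. Producing a degree-bounded preordering certificate for such a polynomial is exactly the statement being proved, so as written the step is circular; smallness in sup norm does not by itself place a polynomial in $\preo(\cube{n})_{(r+1)n}$. The way the cited proof avoids this is to apply the kernel to a \emph{pre-corrected} function rather than to $f$: since the operator $K_r$ acts diagonally on the Chebyshev basis with eigenvalues $\lambda_\alpha>0$, one sets $g := K_r^{-1}f$ (well defined on the span of Chebyshev polynomials of degree $\le d$), uses the same eigenvalue estimates that give your Step~1 bound to show $\|g-f\|_{\infty,\cube{n}} \le \funcmin$ once $r \gtrsim \sqrt{C(n,d)\funcmax/\funcmin}$ and $r \ge \pi d\sqrt{2n}$, hence $g>0$ on $\cube{n}$, and then writes $f = \int K_r(\cdot,\y)\,g(\y)\,d\mu(\y)$. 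Your Step~2 argument applied to this identity (with the nonnegative weight $g\,d\mu$) certifies $f$ itself, with no remainder left to absorb. With that modification your sketch becomes the standard proof; without it, Step~3 does not go through.
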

For ease of exposition, we stated the bound in \Cref{THM:boxSchmudgen} in a (slightly) weaker form than the one of~{\cite[Cor.~3]{LaurentSlot:hypercube}}.
\Cref{THM:boxSchmudgen} improves upon an earlier analysis due to de Klerk \& Laurent~\cite{deKlerkLaurent:hypercube2010}, who established a bound in $O(\funcmax / \funcmin)$.

In the same work\footnote{In fact, they consider there the cube $[0, 1]^n$ defined by the constraints $x_i \geq 0$, $1-x_i \geq 0$, $i \in [n]$, but all statements carry over after a change of variables. See also \Cref{SEC:Discussion}.}, the authors propose the following conjecture (which remains open):
\begin{conjecture}[de Klerk \& Laurent, 2010] 
\label{CONJ:dKL}
For $n \in \N$ even, we have:
\[
    (1-x_1^2)(1-x_2^2) \ldots (1-x_n^2) + \frac{1}{n(n+2)} \in \quadr(\cube{n})_n.
\]
\end{conjecture}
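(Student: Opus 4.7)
My plan is to combine explicit low-dimensional experimentation with a symmetry-reduced ansatz. First I would verify the conjecture numerically for $n=2$ and $n=4$ by solving the associated semidefinite feasibility program. A rational solution at small $n$ often exhibits a combinatorial pattern that suggests the form of a construction in general dimension, and standard round-and-verify techniques can then certify it rigorously.

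The polynomial $p_n(\x) := \prod_{i=1}^n (1-x_i^2) + \tfrac{1}{n(n+2)}$ is invariant under the hyperoctahedral group $S_n \ltimes (\Z/2)^n$, so after averaging we may assume that any representation $p_n = \sigma_0 + \sum_{i=1}^n \sigma_i (1-x_i^2)$ is equivariant: $\sigma_0$ is symmetric in $x_1^2, \ldots, x_n^2$, and the tuple $(\sigma_1, \ldots, \sigma_n)$ is permuted accordingly by $S_n$. This drastically reduces the parameter count in any ansatz. I would then expand the $\sigma_i$ in a basis built from univariate Legendre or Chebyshev polynomials in $x_j^2$, which interact nicely with the factors $1-x_i^2$.

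A complementary angle is to look for an inductive construction producing a representation in dimension $n+2$ from one in dimension $n$. The constant $\tfrac{1}{n(n+2)}$ has a strongly combinatorial flavor: quantities of the form $\tfrac{1}{(2k+1)(2k+3)}$ arise naturally from $\int_{-1}^{1} (1-x^2)^k \, dx$ and related Legendre-moment identities, suggesting a probabilistic or telescoping construction underlying the conjectural bound.

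The main obstacle is that the conjecture has been open since 2010 and is extremely sharp: $p_n$ attains its minimum $\tfrac{1}{n(n+2)}$ on a codimension-$1$ stratum of the boundary of $\cube{n}$, where all gradients vanish. This is precisely the regime in which low-degree Putinar representations are notoriously hard to construct, and any ansatz must be calibrated to vanish to the correct order along the boundary while the multipliers remain SOS. Identifying the correct ansatz will be the crux of the argument. Failing a direct construction, a weaker but still genuinely new result would be to match the conjectured bound up to polylogarithmic factors by refining the techniques of \Cref{SEC:positive}, in particular by specializing the Chebyshev-kernel machinery to the symmetric structure of $p_n$.
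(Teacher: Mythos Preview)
The statement you are addressing is a \emph{conjecture}, not a theorem: the paper explicitly introduces it as ``the following conjecture (which remains open)'' and never proves it. There is therefore no paper proof to compare against. What you have written is not a proof either; it is a research plan (numerical experiments, symmetry reduction, an inductive ansatz, a fallback to weaker bounds), and you candidly acknowledge that ``identifying the correct ansatz will be the crux of the argument.'' None of the steps you describe actually establishes the claimed membership $p_n \in \quadr(\cube{n})_n$.

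A few of your remarks are also inaccurate. The minimum $\tfrac{1}{n(n+2)}$ of $p_n$ is not attained on the boundary of $\cube{n}$: it is attained wherever $\prod_i(1-x_i^2)=0$, which includes the entire boundary, and the gradient of $p_n$ does not vanish identically there. Your fallback proposal of matching the conjectured degree bound ``up to polylogarithmic factors'' via the techniques of \Cref{SEC:positive} is far out of reach of those methods: applied to $p_n$, \Cref{THM:main} gives a degree bound of order $n^3 \log n$, not $n$ up to logs. Finally, note that the paper's own contribution regarding this conjecture is in the opposite direction: \Cref{COR:negative} shows that the additive constant in any such statement cannot be smaller than $\Omega(1/n^8)$, which constrains but does not resolve the conjecture.
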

Assuming \Cref{CONJ:dKL}, one may prove effective versions of Putinar's Positivstellensatz for $\cube{n}$ starting from an effective version of Schm\"udgen's Positivstellensatz. In the original paper~\cite{deKlerkLaurent:hypercube2010}, de Klerk \& Laurent do so only for $d=2$. Magron~\cite{Magron:putinar} performs an analysis in the general case.
\begin{theorem}[{\cite[Thm. 4]{Magron:putinar}}] \label{THM:magron}
    Let $f \in \pos(\cube{n})$ be a polynomial of degree $d$. Assuming \Cref{CONJ:dKL} holds, we have:
    \[
        f \in \quadr(\cube{n})_{r}, \quad \text{ for } r \geq \exp(\frac{d^2n^{d+1}\cdot \|f\|_{\rm coef}}{\funcmin}),
    \]
    where, writing $f(x) = \sum_{\alpha} f_\alpha x^\alpha$, we set $\|f\|_{\rm coef} := \max_{\alpha} |f_\alpha| \cdot \frac{\prod_{i}(\alpha_i!)}{(\sum_i \alpha_i)!}$.
\end{theorem}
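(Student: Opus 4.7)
The overall strategy is to combine an effective Schm\"udgen representation of $f$ on $\cube{n}$ with the conjectured identity in \Cref{CONJ:dKL} to produce a representation in the quadratic module, with the exponential factor arising from the cost of absorbing the resulting error term.

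First, I would apply an effective Schm\"udgen theorem on $\cube{n}$ (or, more elementarily, an explicit P\'olya-type expansion after the change of variables $y_i = (1 \pm x_i)/2$) to obtain, for some $\delta \in (0, \funcmin)$,
\[
f - \delta \;=\; \sum_{I \subseteq [n]} \sigma_I\, g_I, \qquad \sigma_I \in \Sigma[\x],
\]
together with explicit uniform bounds on $\|\sigma_I\|_{\infty, \cube{n}}$ in terms of $\|f\|_{\rm coef}$, $n$, and $d$. Next, I would apply \Cref{CONJ:dKL} inside the subcube of variables indexed by $I$ (handling odd $|I|$ by adjoining a spare coordinate via $1 = x_j^2 + g_j$) to obtain the building-block identities
\[
g_I + c_I \in \quadr(\cube{n})_{|I|}, \qquad c_I = \Theta(1/|I|^2).
\]
Substituting $\sigma_I g_I = \sigma_I(g_I + c_I) - c_I \sigma_I$ into the Schm\"udgen decomposition then yields
\[
f \;=\; \Bigl(\delta - \sum_I c_I \sigma_I\Bigr) \;+\; \sum_I \sigma_I (g_I + c_I),
\]
where the second sum lies in $\quadr(\cube{n})_r$ for an $r$ that is polynomial in $\|f\|_{\rm coef}/\funcmin$, since $\quadr(\cube{n})$ is closed under multiplication by $\Sigma[\x]$.

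The main obstacle is to argue that the constant-minus-SOS leftover $\delta - \sum_I c_I \sigma_I$ also lies in $\quadr(\cube{n})_r$. Its average on $\cube{n}$ is positive by the choice of $\delta$, but the polynomial itself may be pointwise negative, so absorption requires multiplying by a high-degree SOS ``bump'' that concentrates enough mass near the constant function to dominate the oscillation of $\sum_I c_I \sigma_I$. The degree needed for such a concentrating SOS grows exponentially in the ratio of the $L^\infty$-oscillation (itself polynomial in $\|f\|_{\rm coef}/\funcmin$) to $\delta$; a careful bookkeeping, with the combinatorial factor $d^2 n^{d+1}$ absorbing the dependence on the degree of $f$ and on the $2^n$ subsets $I \subseteq [n]$, delivers the claimed bound $r \geq \exp(d^2 n^{d+1} \|f\|_{\rm coef}/\funcmin)$.
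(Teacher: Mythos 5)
This theorem is quoted from Magron's paper and is not proved here, so the relevant comparison is with the cited argument, which follows the Nie--Schweighofer scheme: one constructs a perturbation $p \in \quadr(\cube{n})$ of very high degree (this is where the $\exp(\cdot)$ comes from) so that $f - p$ remains positive on a suitable auxiliary domain, applies an effective Schm\"udgen/P\'olya representation there, and uses \Cref{CONJ:dKL} to pass back into the quadratic module with controlled degree. Your Steps 1--3 (effective Schm\"udgen on $\cube{n}$, the building blocks $g_I + c_I \in \quadr(\cube{n})_{|I|}$ from the conjecture, and the substitution $\sigma_I g_I = \sigma_I(g_I + c_I) - c_I\sigma_I$) are a reasonable start and are in the spirit of de Klerk--Laurent and Magron.

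The gap is the absorption step, and it is fatal as written. Every element of $\quadr(\cube{n})$ is nonnegative on $\cube{n}$, so if the leftover $\delta - \sum_I c_I \sigma_I$ is negative at even one point of the cube --- and you concede it may be --- then it does not lie in $\quadr(\cube{n})_r$ for \emph{any} $r$; its positive average is irrelevant, and ``multiplying by a high-degree SOS bump'' is not an available move: multiplying one summand of an identity by an SOS destroys the identity, and no such multiplication can repair a failure of pointwise nonnegativity. Moreover the leftover genuinely is negative in general: $c_I = \tfrac{1}{|I|(|I|+2)}$ is a \emph{fixed} constant you cannot tune, while the Schm\"udgen multipliers satisfy $\sum_I \sigma_I(\mathbf{0}) \geq f(\mathbf{0}) - \delta$ (since $g_I(\mathbf{0}) = 1$), so $\sum_I c_I \sigma_I$ is typically of order $\funcmax/n^2$, far exceeding $\delta < \funcmin$. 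Finally, even in the fortunate case that the leftover were positive on $\cube{n}$, concluding that it lies in a truncated $\quadr(\cube{n})$ is exactly the effective Putinar statement being proved, so the argument as structured is circular. The missing idea is the mechanism that makes the error genuinely absorbable --- a high-degree perturbation $p \in \quadr(\cube{n})$ (as in Nie--Schweighofer and in Section 4 of this paper) or an iteration with geometrically decaying error --- and it is precisely that mechanism which accounts for the exponential degree bound in the statement. (A smaller point: your fix for odd $|I|$ via $1 = x_j^2 + g_j$ leaves the term $x_j^2 g_I$, which still contains the odd product, so it does not terminate as stated.)
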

We note that the bound of \Cref{THM:magron} is asymptotically weaker than the general result of {Baldi \& Mourrain} (\Cref{THM:GeneralPutinar}), but it predates it, and its dependence on $n, d$ is more explicit. 

Our main result improves exponentially upon Magron's bound, with explicit constants, and without assuming \Cref{CONJ:dKL}. Compared to \Cref{COR:GeneralPutinar}, it improves the dependence on $\funcmax / \funcmin$ by a power of $10$. With respect to \Cref{THM:boxSchmudgen}, the degree bound is quadratically weaker, but it applies to representations in the quadratic module rather than the preordering.

\begin{restatable}[\Cref{THM:main} with explicit constants]{theorem}{putinarbox} 
\label{THM:main2}
Let $f \in \pos(\cube{n})$ be a polynomial of degree $d$ and denote $\funcmax$, $\funcmin$ the maximum and the minimum of $f$ on~$\cube{n}$, respectively. Then we have $f \in \quadr(\cube{n})_{rn}$ whenever
\[
    r \geq     
    4 \Chebyconst \cdot d^2(\log n) \cdot \frac{\funcmax}{\funcmin} + \max \left\{ \pi d \sqrt{2n}, \ \left(2 \Chebyconst \cdot \frac{\funcmax}{\funcmin} \cdot C(n, d)\right)^{1/2} \right\},
\]
where $\Chebyconst > 0$ is the absolute constant given in \Cref{LEM:chebybound} and $C(n, d)$ is the constant of \Cref{THM:boxSchmudgen}.
\end{restatable}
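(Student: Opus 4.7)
The strategy is to combine the Schm\"udgen-type bound of \Cref{THM:boxSchmudgen} with a \emph{Chebyshev-based conversion lemma} that transforms a preordering representation into a quadratic-module one, at the cost of a small additive shift and a controlled degree overhead. Concretely, I would first apply \Cref{THM:boxSchmudgen} to the shifted polynomial $\tilde f := f - c$, with $c < \funcmin$ chosen so that $\tilde f_{\min} = \funcmin/(2\Chebyconst)$. Since $\tilde f_{\max}/\tilde f_{\min} \le 2\Chebyconst\,\funcmax/\funcmin$, this yields a preordering representation
\[
\tilde f \;=\; \sum_{I \subseteq [n]} \sigma_I\, g_I \;\in\; \preo(\cube{n})_{(r_1+1)n},
\]
with $r_1 = \max\bigl\{\pi d\sqrt{2n},\, \sqrt{2\Chebyconst\,(\funcmax/\funcmin)\,C(n,d)}\bigr\}$, reproducing exactly the second (max) term of the claimed bound.

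The heart of the argument is a Chebyshev-based conversion lemma (\Cref{LEM:chebybound}, expanded through the explicit polynomial identities of \Cref{app::degree_shift}), which I expect to state roughly as: \emph{for every $p \in \preo(\cube{n})$ of degree $D$ and every $\epsilon > 0$,}
\[
p + \epsilon \cdot \|p\|_{\infty,\cube{n}} \;\in\; \quadr(\cube{n})_{D + \kappa(\epsilon)\cdot n}, \qquad \kappa(\epsilon) \,=\, O\bigl(\Chebyconst\, d^2 \log n / \epsilon\bigr),
\]
where $\|\cdot\|_{\infty,\cube{n}}$ denotes the sup-norm on $\cube{n}$. The $\log n$ dependence --- exactly what distinguishes our bound from previous work --- is what a univariate Chebyshev construction of degree $O(\log n)$ can deliver in order to mediate between the preordering generators $g_I = \prod_{i \in I}(1-x_i^2)$ and the quadratic-module generators $1 - x_i^2$.

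Given this lemma, I would apply it to $\tilde f$ with $\epsilon := (1 - 1/(2\Chebyconst))\,\funcmin/\funcmax$: then $\tilde f + \epsilon\,\tilde f_{\max} \in \quadr(\cube{n})$ with degree at most $((r_1+1) + \kappa(\epsilon))\,n$. By construction the constant $c - \epsilon\,\tilde f_{\max} = \tilde f_{\min} - \epsilon\,\tilde f_{\max}$ is nonnegative, hence trivially SOS and therefore in $\quadr$. Summing,
\[
f \;=\; (c - \epsilon\,\tilde f_{\max}) \;+\; (\tilde f + \epsilon\,\tilde f_{\max}) \;\in\; \quadr(\cube{n})_{rn},
\]
with $r = r_1 + 1 + \kappa(\epsilon) = r_1 + O(\Chebyconst\, d^2 \log n \cdot \funcmax/\funcmin)$, giving the first term of the bound.

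The main obstacle is the Chebyshev conversion lemma itself. The known general bounds (\Cref{COR:GeneralPutinar}) give only polynomial dependence with exponent $10$, and the naive route via \Cref{CONJ:dKL} yields an exponential bound (\Cref{THM:magron}). Extracting the $d^2 \log n$ factor demands an \emph{explicit} Chebyshev-polynomial construction tailored to the cube's product structure, such that both the degree and the supremum norm of the resulting SOS certificates are tightly controlled. This is where essentially all of the technical work resides; \Cref{app::degree_shift} is advertised to provide precisely these polynomial identities, and once the lemma is in hand, the assembly above is purely algebraic bookkeeping.
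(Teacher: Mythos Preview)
Your proposal has a genuine structural gap: the ``Chebyshev conversion lemma'' you posit --- turning a preordering representation on $\cube{n}$ into a quadratic-module representation on $\cube{n}$ after adding a small constant --- is not what the paper proves, and it is not clear how to prove it without circularity. Indeed, applied to $p = g_I = \prod_{i\in I}(1-x_i^2)$, your lemma would yield exactly the kind of bound that \Cref{CONJ:dKL} asks for (and which is open), and applied to general $p$ it is essentially the main theorem itself. The identities in \Cref{app::degree_shift} do \emph{not} mediate between $\preo(\cube{n})$ and $\quadr(\cube{n})$; they mediate between $\preo(\eta^2 - x_1^2,\ldots,\eta^2-x_n^2)$ and $\quadr(\cube{n})$ for $\eta = \sqrt[2q]{n} > 1$. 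Note also that the $d^2$ factor in your lemma is unmotivated: the degree of $f$ has no reason to enter a conversion statement about an arbitrary $p\in\preo(\cube{n})$.

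The paper's actual route is different in a crucial way. Rather than applying Schm\"udgen on $\cube{n}$ and then converting, it \emph{enlarges} the domain to $\D = [-\eta,\eta]^n$ with $\eta = \sqrt[2q]{n}$, uses the Markov brothers' inequality (this is where $d^2$ and the constant $\Chebyconst$ enter) to guarantee $f \geq \tfrac12\funcmin$ on $\D$ provided $\eta - 1 \leq \epsilon := \funcmin/(2\Chebyconst d^2\funcmax)$, and applies Schm\"udgen on $\D$ to get $f \in \preo(\eta^2 - x_i^2)_{(\ell+1)n}$. The conversion step is then the chain
\[
\preo(\eta^2 - x_i^2)_k \;\subseteq\; \quadr(n - \|\x\|_{2q}^{2q})_{k+n(2q-2)} \;\subseteq\; \quadr(1-x_i^2)_{k+n(2q-2)},
\]
which holds by the explicit identities of \Cref{lem::basic}--\Cref{lem::basic2} precisely because $\eta > 1$. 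The $\log n$ appears because $\sqrt[2q]{n} \leq 1+\epsilon$ forces $q \gtrsim (\log n)/\epsilon$. So the enlargement is not a technicality but the mechanism that makes the conversion possible; working on $\cube{n}$ itself and shifting $f$ by a constant, as you propose, does not give access to this chain.
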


\subsection{Lower degree bounds}
To contextualize the positive results on the strength of sum-of-squares representations discussed above, it would be nice to have complementing \emph{negative} results, i.e, \emph{lower} bounds on the degree~$r$ required to represent positive polynomials. 
Remarkably, such results are rather rare in the literature%
\footnote{The exception is the case where $\cS(\mg) \subseteq \R^n$ is a finite set defined by polynomial equations, in which case \emph{every} nonnegative polynomial of degree $d$ on $\cS(\mg)$ has a representation in $\quadr(\mg)_N$ for some fixed $N = N(\mg, d) \in \N$. There is a large body of research in that setting, particularly when $\cS(\mg) \subseteq \{-1, 1\}^n$, see, e.g., \cite{KurpiszLeppanenMastrolilli:hardest} and references therein.}
For non-finite semialgebraic sets, the authors are only aware of the following result of Stengle~\cite{Stengle:negative}, which shows a lower degree bound already in the case $n=1$ if one uses a nonstandard representation of the interval $\cube{1} = [-1, 1] \subseteq \R$.
\begin{theorem}[{\cite[Thm.~4]{Stengle:negative}}]
\label{THM:Stengle}
For any $\epsilon > 0$ and $r \in \N$, we have: 
\[(1-x^2) + \epsilon \in 
% \quadr((1-x^2)^3)_r = 
\preo\big((1-x^2)^3\big)_r \implies
    r = \Omega(1/\sqrt{\epsilon}).
\]
\end{theorem}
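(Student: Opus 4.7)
The plan is to suppose, for contradiction, that $(1-x^2) + \epsilon = \sigma_0(x) + \sigma_1(x)(1-x^2)^3$ with $\sigma_0, \sigma_1$ sums of squares and $\deg \sigma_0,\ \deg(\sigma_1(1-x^2)^3) \leq r$, and to derive $r = \Omega(1/\sqrt{\epsilon})$.

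The starting point is that $g(x) := (1-x^2)^3$ vanishes to order three at $x = 1$, so the Taylor coefficients of $\sigma_0$ at $x = 1$ up to order two are forced to match those of $h(x) := (1-x^2) + \epsilon$:
\[
\sigma_0(1) = \epsilon, \qquad \sigma_0'(1) = -2, \qquad \sigma_0''(1) = -2.
\]
Since $\sigma_0$ is nonnegative and univariate, it admits a two-square decomposition $\sigma_0 = A^2 + B^2$ with $\deg A, \deg B \leq N := \lfloor r/2\rfloor$. After rotating $(A,B)$ in $\R^2$, I may assume $A(1) = \sqrt{\epsilon}$ and $B(1) = 0$; the identity $\sigma_0'(1) = 2\sqrt{\epsilon}\,A'(1) = -2$ then forces $A'(1) = -1/\sqrt{\epsilon}$.

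The second ingredient is the pointwise bound $A(x)^2 \leq \sigma_0(x) \leq h(x) = (1-x^2) + \epsilon$ on $[-1,1]$, coming from $\sigma_1 g \geq 0$ there. Substituting $x = \cos\theta$ converts $A$ into an even trigonometric polynomial $F(\theta) := A(\cos\theta)$ of degree at most $N$, satisfying
\[
|F(\theta)|^2 \;\leq\; \sin^2\theta + \epsilon \text{ for all }\theta, \qquad F(0) = \sqrt{\epsilon}, \qquad F''(0) = -A'(1) = 1/\sqrt{\epsilon}.
\]
The proof then reduces to a weighted Bernstein-type inequality: \emph{if $F$ is a trigonometric polynomial of degree $\leq N$ with $|F(\theta)|^2 \leq \sin^2\theta + \epsilon$ for all $\theta$, then $|F''(0)| \leq C\,N^2\sqrt{\epsilon}$ for an absolute constant $C$.} Combining this with $|F''(0)| = 1/\sqrt{\epsilon}$ gives $N^2 \geq 1/(C\epsilon)$, and hence $r \geq 2N = \Omega(1/\sqrt{\epsilon})$.

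The hard part is this weighted Bernstein estimate. The naive bound $|F''(0)| \leq N^2 \|F\|_\infty \leq N^2\sqrt{1+\epsilon}$ only yields $r = \Omega(\epsilon^{-1/4})$, missing the crucial fact that $|F|$ is as small as $\sqrt{\epsilon}$ at $\theta = 0$ rather than merely $\sqrt{1+\epsilon}$ globally. To sharpen it, I would formulate the dual extremal problem and construct a certifying pseudo-expectation $L : \R[x]_r \to \R$ separating $h$ from $\quadr(g)_r$, using Christoffel function estimates for the Jacobi weight $(1-x^2)^3\,dx$ at a point $y = 1 + O(\epsilon)$ just outside $[-1,1]$; alternatively, iterating the Cauchy--Schwarz/Markov analysis above to higher derivatives $|A^{(k)}(1)|$---using the global nonnegativity $\sigma_0(1+s) \geq 0$ at shifted points $s \asymp \epsilon$ to extract lower bounds on each of them---should also succeed.
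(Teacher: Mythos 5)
Your reduction steps are sound: since $(1-x^2)^3$ vanishes to order three at $x=1$, the decomposition does force $\sigma_0(1)=\epsilon$ and $\sigma_0'(1)=-2$; the two-square decomposition plus a rotation legitimately produces $A$ of degree $N\le\lfloor r/2\rfloor$ with $A(1)=\sqrt{\epsilon}$, $A'(1)=-1/\sqrt{\epsilon}$ and $A(x)^2\le 1-x^2+\epsilon$ on $[-1,1]$, and $F(\theta)=A(\cos\theta)$ satisfies $|F(\theta)|^2\le\sin^2\theta+\epsilon$ with $F''(0)=-A'(1)$. But all of the difficulty has been pushed into the ``weighted Bernstein-type inequality'' $|F''(0)|\le C\,N^2\sqrt{\epsilon}$, which you do not prove --- and which is in fact false as stated. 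Take $N$ even and $A(x)=\frac{1-T_N(x)}{4N}$, i.e.\ $F(\theta)=\frac{1-\cos(N\theta)}{4N}=\frac{\sin^2(N\theta/2)}{2N}$. A short case check (on $[0,2/N]$, on $[2/N,\pi/2]$, and by the symmetry $\theta\mapsto\pi-\theta$ for even $N$) gives $0\le F(\theta)\le|\sin\theta|$ for all $\theta$, so the hypothesis $|F|^2\le\sin^2\theta+\epsilon$ holds for \emph{every} $\epsilon>0$; yet $|F''(0)|=T_N'(1)/(4N)=N/4$, which exceeds $CN^2\sqrt{\epsilon}$ whenever $N$ is small compared with $1/\sqrt{\epsilon}$. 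Hence no absolute constant $C$ can make your key lemma true.

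The correct endpoint estimate must carry an additive term linear in $N$: the Dzyadyk-type heuristic (on the critical window $1-x\lesssim 1/N^2$ the majorant has size about $1/N+\sqrt{\epsilon}$) suggests $|F''(0)|\le C(N+N^2\sqrt{\epsilon})$, and this weaker form would actually still close your argument, since $1/\sqrt{\epsilon}\le C(N+N^2\sqrt{\epsilon})$ forces $N=\Omega(1/\sqrt{\epsilon})$ in either branch. But you neither state nor prove any such inequality; the routes you sketch (Christoffel-function duality, iterating Cauchy--Schwarz/Markov at shifted points) remain speculative, so the proposal is incomplete precisely at its one hard step. For context, the paper does not reprove this theorem (it cites Stengle), and both Stengle's argument and the paper's own lower bound for the square, \Cref{PROP:negative}, proceed differently: they never decompose $\sigma_0$ into two squares, but instead bound the multiplier $\sigma_1$ on slightly scaled intervals via Chebyshev growth (\Cref{LEM:Markovscaled}, \Cref{LEM:Markov}, \Cref{LEM:chebybound}) and then run a quantitative Taylor expansion at the boundary zero to reach the contradiction.
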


Notably, the lower bound of \Cref{THM:Stengle} matches the best-known upper bound of \Cref{THM:boxSchmudgen} for the preordering of $\cube{n}$ (with the standard description).
In \Cref{SEC:negative}, we prove the following lower degree bound for the quadratic module:

\begin{restatable}{proposition}{negative} \label{PROP:negative}
For any $\epsilon > 0$ and $r \in \N$, we have 
\[ (1-x^2)(1-y^2) + \epsilon \in \quadr(\cube{2})_r \implies
r = \Omega(1/\sqrt[8]{\epsilon}).
\]
\end{restatable}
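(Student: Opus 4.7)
I would assume a representation
\[
(1-x^2)(1-y^2) + \epsilon = \sigma_0(x, y) + \sigma_1(x, y)(1-x^2) + \sigma_2(x, y)(1-y^2)
\]
with each $\sigma_i \in \Sigma[x, y]$ and the degree bounds defining membership in $\quadr(\cube{2})_r$, and derive a necessary lower bound on $r$ in terms of $\epsilon$. The strategy is to reduce the two-variable identity to a univariate approximation problem: show that $\sigma_2(x, 1)$ is a sum of squares in $x$ that uniformly approximates $1 - x^2$ on $[-1, 1]$ to within $O(r^2 \sqrt{\epsilon})$, and then invoke a sharp $\Omega(1/r^2)$ lower bound on the best SOS approximation of $1 - x^2$.

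\textbf{Step 1 (SOS Markov inequality).} The essential analytical input is that if $h$ is a nonnegative polynomial (e.g., a sum of squares) of degree $\le r$ in one variable, with $h(1) = a$ and $\max_{[-1, 1]} h \le M$, then $|h'(1)| \le Cr^2\sqrt{aM}$. This can be proved by Taylor expanding $h$ around $x = 1$ on both sides (using $h \ge 0$ globally) and using Markov's inequality to bound $h''$ on $[-1, 1]$, then optimizing over the Taylor step size.

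\textbf{Step 2 (Reduction to a univariate approximation).} Setting $y = 1$ yields $\sigma_0(x, 1) + (1 - x^2)\sigma_1(x, 1) = \epsilon$, so both summands are nonnegative polynomials of $x$ bounded by $\epsilon$ on $[-1, 1]$. Differentiating the identity with respect to $y$ and evaluating at $y = 1$ then gives
\[
\sigma_2(x, 1) = (1-x^2) + \tfrac{1}{2}\bigl[\partial_y\sigma_0(x, 1) + (1-x^2)\,\partial_y\sigma_1(x, 1)\bigr].
\]
Applying Step 1 in the $y$-direction (for each fixed $x \in [-1,1]$) to the nonnegative polynomials $\sigma_0(x, \cdot)$ and $(1-x^2)\sigma_1(x, \cdot)$---both bounded above by $p \le 1+\epsilon$ on $\cube{2}$, and both at most $\epsilon$ at $y = 1$---bounds each bracketed term by $O(r^2 \sqrt{\epsilon})$. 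Hence $\tau(x) := \sigma_2(x, 1)$ is a sum of squares in $x$ of degree $\le r - 2$ satisfying $\|\tau - (1 - x^2)\|_{\infty, [-1, 1]} = O(r^2 \sqrt{\epsilon})$.

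\textbf{Step 3 (Sharp SOS approximation bound and conclusion).} The lower bound then follows from the approximation-theoretic lemma
\[
\inf_{\tau \in \Sigma[x]_{\le r}} \max_{x \in [-1, 1]} |\tau(x) - (1-x^2)| = \Omega(1/r^2),
\]
which I would prove by convex duality: construct a signed test measure $\mu = w \, dx - \tfrac{1}{2}(\delta_1 + \delta_{-1})$ on $[-1, 1]$ where $w$ is a small constant of order $1/r^2$, so that $\int \sigma \, d\mu \le 0$ for every $\sigma \in \Sigma[x]_{\le r}$ by the classical Christoffel estimate $\sigma(\pm 1) \le Cr^2 \int_{-1}^{1} \sigma \, dx$ at the endpoints, while $\int (1-x^2)\, d\mu = \Omega(1/r^2)$. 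Combining with Step 2 forces $r^2\sqrt{\epsilon} \gtrsim 1/r^2$, i.e., $r^4\sqrt{\epsilon} = \Omega(1)$, hence $r = \Omega(\epsilon^{-1/8})$. The main obstacle is the sharpness in Step 3: a more direct SOS-Markov comparison of $\tau$ and $1-x^2$ at $x = 1$ only yields a weaker distance bound of order $\Omega(1/r^4)$, which gives the suboptimal exponent $\epsilon^{-1/12}$; the improvement to $\Omega(1/r^2)$ via the Christoffel/duality argument is what ultimately drives the exponent $1/8$.
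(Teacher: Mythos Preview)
Your Steps 1 and 2 are correct and yield the clean reduction $\|\sigma_2(\cdot,1)-(1-x^2)\|_{\infty,[-1,1]}=O(r^2\sqrt{\epsilon})$. The gap is in Step 3: the dual measure you propose does \emph{not} satisfy $\int\sigma\,d\mu\le 0$ for all $\sigma\in\Sigma[x]_{\le r}$. With $\mu=w\,dx-\tfrac12(\delta_1+\delta_{-1})$ this condition reads $w\int_{-1}^1\sigma\le\tfrac12(\sigma(1)+\sigma(-1))$, whereas the Christoffel estimate $\sigma(\pm1)\le Cr^2\int\sigma$ goes the \emph{opposite} way; indeed $\sigma=(1-x^2)^2$ has $\sigma(\pm1)=0$ and positive integral, so $\int\sigma\,d\mu>0$ for any $w>0$. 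Fortunately your claimed bound $\inf_{\tau\in\Sigma_{\le r}}\|\tau-(1-x^2)\|_{\infty,[-1,1]}=\Omega(1/r^2)$ is correct, via a much simpler argument: set $e=\tau-(1-x^2)$; since $\tau\ge 0$ on $\R$ we have $e(x)\ge x^2-1$ for all $x$, while $\|e\|_{\infty,[-1,1]}=\eta$ and $\deg e\le r$ force $|e(x)|\le \eta\,T_r(x)$ for $x\ge 1$ by the Chebyshev extremal inequality. Evaluating at $x=1+1/r^2$ gives $2/r^2\lesssim e(1+1/r^2)\le \eta\,T_r(1+1/r^2)=O(\eta)$, hence $\eta=\Omega(1/r^2)$. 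With this fix your conclusion $r^2\sqrt{\epsilon}\gtrsim 1/r^2$, i.e.\ $r=\Omega(\epsilon^{-1/8})$, goes through.

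This route is genuinely different from the paper's. The paper never isolates a univariate approximation problem; instead it works directly with $g(t)=\sigma_1(1+t,1-t)$ along the anti-diagonal through the corner $(1,1)$, bounding $g(0)$, $g'(0)$ and $g''$ via Markov-type inequalities and Taylor expansion, and then playing the resulting upper bound on $g(\delta)$ against the lower bound $g(\delta)\gtrsim\sqrt{\epsilon}$ forced by the identity at $(1+\delta,1-\delta)$ with $\delta=\sqrt{\epsilon}$. Your reduction is arguably cleaner and more modular (Step 3 becomes a reusable lemma about SOS approximation of $1-x^2$), while the paper's argument is more self-contained and avoids the need to identify the sharp rate in such a lemma. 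Both arrive at the same exponent $1/8$.
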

\Cref{PROP:negative}~differs from Stengle's result in three important ways.

First, it applies to a standard description of the hypercube~$\cube{n}$,
while \Cref{THM:Stengle} does not (see \Cref{SEC:Discussion} for a more detailed discussion). In particular, this description meets the constraint qualification conditions, see \Cref{def::CQC},
while the description that Stengle uses does not.

{
Second, notice that the Sch\"udgen-type representation of $(1-x^2)(1-y^2) + \epsilon \in \preo(\cube{n})$ is trivially of constant degree $4$ for all $\epsilon>0$, while its Putinar-type representation is proven to be of unbounded degree as ${\epsilon \to 0}$. Therefore, we have not only found a family of bounded-degree polynomials whose Putinar-type representations are of unbounded degree, but this family lies in $\preo(\cube{n})_4$. This is a significant difference with Stengle's result, as Putinar-type and Schm\"udgen-type representations coincide in \Cref{THM:Stengle}.
}

Third, the bound shown in our result is much weaker than Stengle's bound (it is of the order $1 / \sqrt[8]{\epsilon}$ compared to $1/\sqrt{\epsilon}$). In fact, Stengle~\cite{Stengle:negative} shows his bound is the best-possible up to log-factors, whereas we have no reason to believe our bound is close to optimal asymptotically (the upper bound of \Cref{THM:main} is of the order $1/\epsilon$).

\Cref{PROP:negative} generalizes to the setting $n > 2$ in a straightforward way, yielding an immediate implication for \Cref{CONJ:dKL}:
\begin{corollary} \label{COR:negative}
    Let $n \in \N$. For any $\epsilon > 0$ and $r \in \N$, we have:
    \[
    (1-x_1^2)(1-x_2^2) \ldots (1-x_n^2) + \epsilon \in \quadr(\cube{n})_r \implies r = \Omega(1/\sqrt[8]{\epsilon}).
    \]
    In particular, we have:
    \[
    (1-x_1^2)(1-x_2^2) \ldots (1-x_n^2) + \epsilon \in \quadr(\cube{n})_n \implies \epsilon = \Omega(1/n^8),
    \]
    for every $n \in \N$.
\end{corollary}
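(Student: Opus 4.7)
The plan is to reduce the $n$-variable statement to the two-variable case already established in Proposition~\ref{PROP:negative} by means of a simple substitution argument. I will consider the evaluation homomorphism $\phi \colon \R[x_1, \ldots, x_n] \to \R[x_1, x_2]$ that sends $x_i \mapsto 0$ for every $3 \le i \le n$. Applied to the target polynomial this gives $\phi\bigl((1-x_1^2)\cdots(1-x_n^2) + \epsilon\bigr) = (1-x_1^2)(1-x_2^2) + \epsilon$, so the corollary will follow if I can show that $\phi$ sends $\quadr(\cube{n})_r$ into $\quadr(\cube{2})_r$: applying Proposition~\ref{PROP:negative} to the image of the hypothesis then yields $r = \Omega(1/\sqrt[8]{\epsilon})$.

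To establish the key claim about $\phi$, I would start from a representation $f = \sigma_0 + \sum_{i=1}^{n} \sigma_i(1-x_i^2) \in \quadr(\cube{n})_r$ with $\sigma_i \in \Sigma[\x]$ and $\deg(\sigma_i(1-x_i^2)) \le r$, and apply $\phi$ term by term. Since sums of squares are stable under substitution and degrees cannot grow, each $\tilde\sigma_i := \phi(\sigma_i)$ is a sum of squares in $\R[x_1, x_2]$ of degree at most $\deg \sigma_i$. Noting that $\phi(1-x_i^2) = 1$ for $i \ge 3$, I would regroup the image as
\[
    \phi(f) = \Bigl(\tilde\sigma_0 + \sum_{i=3}^{n} \tilde\sigma_i\Bigr) + \tilde\sigma_1(1-x_1^2) + \tilde\sigma_2(1-x_2^2),
\]
and observe that this is a valid representation in $\quadr(\cube{2})_r$: the parenthesized term is a sum of squares of degree at most $r$ (being a sum of such), and $\deg(\tilde\sigma_i(1-x_i^2)) \le r$ for $i = 1, 2$.

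The second statement of the corollary is then an algebraic rearrangement: specializing to $r = n$, the inequality $n \ge c/\sqrt[8]{\epsilon}$, where $c > 0$ is the absolute constant hidden in the $\Omega$-notation of Proposition~\ref{PROP:negative}, becomes $\epsilon \ge (c/n)^8 = \Omega(1/n^8)$. The whole argument is a routine reduction; there is essentially no obstacle, and the only point requiring attention is the verification that the degree bound $r$ is preserved under the substitution, which is immediate because substituting $0$ for a variable can only decrease the degree.
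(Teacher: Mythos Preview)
Your proposal is correct and follows essentially the same approach as the paper: both reduce to the two-variable case of Proposition~\ref{PROP:negative} by substituting $x_i \mapsto 0$ for $i \ge 3$ in a putative representation, noting that this preserves sums of squares and degree bounds and sends the constraints $(1-x_i^2)$ with $i \ge 3$ to $1$. Your write-up is slightly more detailed (you explicitly verify the degree bound survives and you spell out the rearrangement for the second assertion, which the paper leaves implicit), but the argument is the same.
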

\begin{proof}
Suppose that $n \ge 2$ and we have a representation:
    \[
        \prod_{i=1}^n (1-x_i^2) + \epsilon = \sigma_0(\x) + \sum_{i=1}^n (1-x_i^2) \sigma_i(\x) \in \quadr(\cube{n})_r.
    \]
    Then setting $x_i=0$ for all $i > 2$ yields a representation:
    \[
        \prod_{i=1, 2} (1-x_i^2) + \epsilon = \sigma_0(\x) + \sum_{i=1, 2} (1-x_i^2) \sigma_i(x_1, x_2, \mathbf{0}) + \sum_{i=3}^n \sigma_i(x_1, x_2, \mathbf{0}) \in \quadr(\cube{2})_r,
    \]
    and so the lower bound $r = \Omega(1/\sqrt[8]{\epsilon})$ of \Cref{PROP:negative} applies here as well.
\end{proof}

In a more abstract direction, the existence of lower degree bounds for Putinar's and Schm\"udgen's Positivstellens\"atze is deeply related to the \emph{non}-stability property for $\quadr (\vb g)$ and $\preo (\vb g)$.
This connection is hardly found in the literature (with the exception of \cite{scheiderer_non-existence_2005}). In \Cref{SEC:Discussion},
 we therefore recall the notion of stability, give an overview of the related results and propose some research directions.

\subsection{Applications to polynomial optimization} \label{SEC:polyopt}
A polynomial optimization problem (POP) asks to minimize a given polynomial $p$ over a (compact) semialgebraic set $\cS(\mg)$, that is, to compute:
\begin{equation} \label{EQ:POP} \tag{POP}
    p_{\min} := \min_{\x \in \cS(\mg)} p(\x).
\end{equation}
Problems of the form~\eqref{EQ:POP} are generally hard and have broad applications~\cite{Lasserre:book,Laurent:polopt}. The simple case of the minimization of a polynomial on the unit hypercube is of particular interest. For example, the stability number of a graph $G = (V, E)$ equals (see for instance ~\cite[Eq.~(17)]{ParkHong:handelman})
\[
    \alpha(G) = \min_{\x \in [-1, 1]^V} \frac{1}{2} \sum_{i \in V} (1-x_i) - \frac{1}{4} \sum_{\{i, j\} \in E} (1-x_i) (1-x_j).
\]
The \emph{moment-SOS hierarchy}~\cite{Lasserre:lowbound, Parillo:thesis} provides a series of tractable \emph{lower bounds} on~$p_{\min}$. Namely, for $r \in \N$, we set:
\begin{equation} \label{EQ:lowerbound}
    p_{(r)} := \max_{\lambda \in \R} \big\{\lambda : p - \lambda \in \quadr(\mg)_r \big\} \leq p_{\min}.
\end{equation}
For fixed $r \in \N$, the bound $p_{(r)}$ may be computed by solving a semidefinite program of size polynomial in the number of variables $n$ and the number of constraints $m$ defining $\cS(\mg)$.
If $\quadr(\mg)$ is Archimedean, Putinar's Positivstellensatz tells us that $\lim_{r \to \infty} p_{(r)} = p_{\min}$, i.e., that the hierarchy converges. In this light, effective versions of Putinar's Positivstellensatz can be thought of as bounds on the \emph{rate} of this convergence. In this direction, our upper bound \Cref{THM:main} and our lower bound \Cref{THM:negative} imply the following.
\begin{corollary}\label{cor::upper}
    Let $p \in \R[\x]$ be a polynomial to be minimized over the hypercube~$\cube{n}$, defined by $g_i = 1-x_i^2$ for $i=1, \dots, n$, and let $p_{(r)} \leq p_{\min}$ be the lower bound of~\cref{EQ:lowerbound}. Then we have:
    \[
    p_{\min} - p_{(r)} = O(1/r) \quad (r \to \infty).
    \]
\end{corollary}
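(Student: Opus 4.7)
The plan is to derive the corollary as a direct consequence of the effective Putinar-type bound \Cref{THM:main2}, applied to strictly positive shifts of the objective~$p$. Fix $d = \deg p$ and, for a parameter $\epsilon > 0$, consider
\[
    f_\epsilon := p - (p_{\min} - \epsilon) \in \pos(\cube{n}),
\]
which by construction satisfies $(f_\epsilon)_{\min} = \epsilon$ and $(f_\epsilon)_{\max} = p_{\max} - p_{\min} + \epsilon$, so that
\[
    \frac{(f_\epsilon)_{\max}}{(f_\epsilon)_{\min}} = \frac{p_{\max} - p_{\min}}{\epsilon} + 1.
\]

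Applying \Cref{THM:main2} to $f_\epsilon$, with $n$ and $d$ held fixed, one reads off a constant $K = K(n,d,p) > 0$ such that
\[
    f_\epsilon \in \quadr(\cube{n})_{rn} \quad \text{whenever} \quad r \geq K/\epsilon,
\]
provided $\epsilon$ is small enough that the linear term in $\funcmax/\funcmin$ dominates the $O(\sqrt{\funcmax/\funcmin})$ correction in the hypothesis of \Cref{THM:main2}. The existence of such a representation means that $\lambda := p_{\min} - \epsilon$ is feasible in the maximization \cref{EQ:lowerbound} at truncation level $rn$, and hence $p_{(rn)} \ge p_{\min} - \epsilon$.

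To conclude, set $R := rn$ and pick $\epsilon := Kn/R$: the previous inequality becomes $p_{\min} - p_{(R)} \le Kn/R$ for every sufficiently large $R$ that is a multiple of $n$, and monotonicity of $R \mapsto p_{(R)}$ (which follows from $\quadr(\cube{n})_R \subseteq \quadr(\cube{n})_{R+1}$) extends the estimate to all large $R$. Since $n$ and $p$ are fixed, this yields exactly $p_{\min} - p_{(R)} = O(1/R)$.

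The main obstacle is purely bookkeeping: \Cref{THM:main2} produces representations at truncation level $rn$ rather than $r$, so the argument naturally yields an $O(n/R)$ rate, which is absorbed into the implicit constant of the $O(1/r)$ conclusion because $n$ is held fixed. No technical ingredient beyond the effective Putinar Positivstellensatz is required.
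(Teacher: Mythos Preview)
Your argument is correct and is exactly the intended derivation: the paper states \Cref{cor::upper} without proof, as it follows directly from \Cref{THM:main2} by applying it to $f_\epsilon = p - (p_{\min} - \epsilon)$ and optimizing over~$\epsilon$. The bookkeeping you carry out (absorbing the factor~$n$ into the constant, and using monotonicity of $r \mapsto p_{(r)}$ to pass from multiples of $n$ to all large~$r$) is precisely what is needed.
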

\begin{corollary}\label{cor::lower}
    For each $2\le n \in \N$, there exists a polynomial $p$ of degree $4$ to be minimized over the hypercube $\cube{n}$, defined by $g_i = 1-x_i^2$ for $i=1, \dots, n$, with ${p_{\min} = 0}$, $p_{\max} = 1$, and for which the bound of~\eqref{EQ:lowerbound} satisfies:
    \[
    p_{\min} - p_{(r)} = \Omega(1/r^8) \quad (r \to \infty).
    \]
\end{corollary}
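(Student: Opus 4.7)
The plan is to exhibit a family of degree-$4$ hard instances directly furnished by \Cref{THM:negative}. Concretely, for every $n \ge 2$, I would take
\[
    p(\x) \,=\, (1-x_1^2)(1-x_2^2) \,\in\, \R[x_1, \dots, x_n],
\]
which has degree~$4$ independent of~$n$, and since each factor lies in $[0,1]$ on $\cube{n}$, one checks immediately that $p_{\min} = 0$ (attained whenever $x_1^2 = 1$ or $x_2^2 = 1$) and $p_{\max} = 1$ (attained at the origin). Thus $p$ meets all the normalization conditions of the corollary, and the entire remaining task reduces to bounding $p_{(r)}$ from above.

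Next I would translate the SDP definition~\eqref{EQ:lowerbound} into a representation problem. Any feasible $\lambda$ must satisfy $p - \lambda \ge 0$ on $\cube{n}$, hence $\lambda \le p_{\min} = 0$; writing $\lambda = -\epsilon$ with $\epsilon \ge 0$, feasibility of $\lambda$ becomes
\[
    (1-x_1^2)(1-x_2^2) + \epsilon \,\in\, \quadr(\cube{n})_r.
\]
Invoking \Cref{THM:negative} then forces $r = \Omega(1/\sqrt[8]{\epsilon})$, equivalently $\epsilon = \Omega(1/r^8)$. Passing to the supremum over feasible $\lambda$ yields $p_{(r)} \le -\Omega(1/r^8)$, and therefore $p_{\min} - p_{(r)} = \Omega(1/r^8)$, which is exactly the claim.

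The only bookkeeping point to address is that the supremum defining $p_{(r)}$ need not be attained; I would handle this by noting that the set of feasible $\lambda$ is downward-closed, since adding any nonnegative constant keeps one inside $\quadr(\cube{n})_r$. The quantitative bound from \Cref{THM:negative} therefore applies to every strictly feasible $\lambda$ and passes to the supremum in the limit. I do not expect any significant obstacle: the entire quantitative content is already packaged in \Cref{THM:negative}, and this corollary is essentially an unpacking of the Lasserre hierarchy definition, so the proof is a short and direct specialization.
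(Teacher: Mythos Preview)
Your proposal is correct and matches the approach implicit in the paper: take $p=(1-x_1^2)(1-x_2^2)$, note that feasibility of $\lambda=-\epsilon$ in~\eqref{EQ:lowerbound} is exactly the hypothesis of \Cref{THM:negative}, and invert $r=\Omega(\epsilon^{-1/8})$ to $\epsilon=\Omega(r^{-8})$. The paper does not spell out a proof of this corollary, but your derivation is precisely what is intended; the bookkeeping about the supremum is harmless (once every feasible $\lambda$ satisfies $\lambda\le -C/r^8$, the supremum does too, attained or not).
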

In principle, one could define a (tighter) lower bound of the form~\eqref{EQ:lowerbound} using the \emph{preordering} $\preo(\mg)$ instead of the quadratic module~$\quadr(\mg)$. The analysis with the preordering is performed in \cite{LaurentSlot:hypercube} (see also \Cref{THM:boxSchmudgen}) where the authors deduce a convergence rate of $O(1/r^2)$. On the other hand, \Cref{cor::upper} shows weaker a degree bound in $O(1/r)$ for case of the quadratic module. But computing the bound using the preordering would require solving a semidefinite program that is not of polynomial size in the number of constraints $m$, while the bound using the quadratic module has linear size in $m$. For this reason, the bound of \Cref{cor::upper} in $O(1/r)$ is more relevant in practice, and its implications for polynomial optimization are arguably greater.

We notice also that the same asymptotic bounds hold true if we describe the hypercube $\cube{n}$ using the other standard set of inequalities, namely $1\pm x_i$ for $i \in [n]$, as explained in \Cref{SEC:Discussion}.

\section{Preliminaries}\label{SEC:prelim}

\subsection{Notations} 
\label{SEC:notations}
Throughout the article:
\begin{itemize}
    \item $[n] = \{ 1 , 2, \ldots, n\}$ for $n \in \N$;
    \item $x, t \in \R$ and $\x = (x_1, \ldots, x_n) \in \R^n$ denote real variables;
    \item $\R[\x] = \R[x_1, \ldots, x_n]$ denotes the polynomial ring in $n$ variables;
    \item $\Sigma[\x] \subseteq \R[\x]$ denotes the convex cone of sums of squares;
    \item {$\quadr(\cube{n})_r = \quadr(1-x_1^2, \dots , 1-x_n^2)_r$ is the truncated quadratic module at degree $r$ associated to the unit hypercube $\cube{n} = \cS(1-x_1^2, \dots ,1-x_n^2)$, consisting of polynomials of the form $\sigma_0 + \sigma_{1} (1-x_1^2)  + \ldots + \sigma_n (1-x_n^2)$ with $\sigma_i \in \Sigma[\x],~\mathrm{deg}(\sigma_0) \leq r$ and $\mathrm{deg}(\sigma_i(1-x_i^2)) \leq r$.}
    \item $f \in \R[\x]$ is a polynomial of degree $d$;
    \item $\funcmin, \funcmax$ are the minimum and maximum of $f$ on $\cube{n}$, respectively;
    \item for $k \in \N$ and $\x \in \R^n$, $\|{\x}\|_k = \big(\sum_{i=1}^n x_i^k\big)^{1/k}$ denotes the $L^k$-norm of $\x$, and $\|\x\|_\infty = \max_{i=1,\ldots,n} |x_i|$ denotes its $L^\infty$-norm.
\end{itemize}

\subsection{The Markov Brothers' inequality}
A key technical tool in the proofs of \Cref{SEC:positive} and~\Cref{SEC:negative} is the Markov Brothers' inequality \cite{MR1511855, markov1890ob}, see~\cite{Shadrin:Markov} for a modern account. 
In its general form, it bounds the norm of (higher-order) derivatives of a polynomial of given degree in terms of its supremum norm on an appropriate unit ball. It is  applied by Stengle~\cite{Stengle:negative} in his proof of \Cref{THM:Stengle}. To state the theorem, we first need to introduce Chebyshev polynomials.

\begin{definition}[see, e.g., \cite{Szego:book}]
    For $d \in \N$, the Chebyshev polynomial $\Cheby_d \in \R[x]$ of degree $d$ is defined as:
    \begin{equation} \label{EQ:chebydef}
        \Cheby_d(x) = 
        \begin{cases}
            \cos (d \arccos x) \quad & |x| \leq 1,\\
            \frac{1}{2} \bigg( \big( x + \sqrt{x^2 -1}\big)^d + \big( x - \sqrt{x^2 -1}\big)^d\bigg) \quad & |x| \geq 1.
        \end{cases}
    \end{equation}
\end{definition}
We recall that $|\Cheby_d(x)| \leq \Cheby_d(1) = 1$ for $x \in [-1, 1]$, that $\Cheby_d(x) = (-1)^d \cdot \Cheby_d(-x)$ for all $x \in \R$, and finally that $\Cheby_d(x)$ is monotonely increasing in $x$ for $x \geq 1$.

\begin{theorem}[{special case of \cite[Thm. 2]{Skalyga:markov}, see also ~\cite[Thm. 1]{Harris:markov}}]
\label{THM:genmarkov}
Let $\| \cdot \|$ be any norm on $\R^n$. Let $p \in \R[\x]$ be a polynomial of degree $d$, and write $\|p\|_\infty = \max_{\|\x\| \leq 1} |p(\x)|$. Then for all $k \geq 0$ and $\y \in \R^n$ with $\|\y\| \leq 1$, we have:
\begin{equation} \label{EQ:markovk>0}
    \left|\dv[k]{t} p(\x+t\y)\Bigr|_{t=0}\right| \leq 
    \begin{cases} 
        \|p\|_\infty \cdot T^{(k)}_d(1)     \quad & \|\x\| \leq 1, \\ 
        \|p\|_\infty \cdot T^{(k)}_d(\|\x\|) \quad & \|\x\| \geq 1.
    \end{cases}
\end{equation}
Here, $T_d^{(k)}$ is the $k$-th derivative of $T_d$. In particular, setting $k = 0$, we have:
\begin{equation} \label{EQ:markovk0}
    \abs{ p(\x) } \leq \|p\|_\infty \cdot T_d(\|\x\|) \quad \text{ for } \quad \|\x\| \geq 1.
\end{equation}
\end{theorem}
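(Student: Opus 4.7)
The plan is to reduce the multivariate statement to classical univariate Chebyshev-type inequalities by restricting $p$ to appropriate affine lines.

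First, I would establish the $k = 0$ assertion \eqref{EQ:markovk0}. Fix $\x$ with $r := \|\x\| \geq 1$ and consider the univariate polynomial $R(s) := p(s\x/r)$ of degree at most $d$. Since $\|s\x/r\| = |s|$, we have $|R(s)| \leq \|p\|_\infty$ for all $s \in [-1, 1]$, and Chebyshev's extremal property (equivalently, V.A.~Markov's inequality at $k = 0$) yields $|R(s)| \leq \|p\|_\infty \cdot T_d(s)$ for $s \geq 1$. Evaluating at $s = r$ gives $|p(\x)| \leq \|p\|_\infty \cdot T_d(\|\x\|)$.

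For the $k \geq 1$ assertion \eqref{EQ:markovk>0}, introduce the univariate restriction $Q(s) := p(\x + s\y)$ of degree at most $d$, so that $Q^{(k)}(0) = \dv[k]{t} p(\x + t\y)\bigr|_{t=0}$. Setting $a := \max(1, \|\x\|) \geq 1$, the triangle inequality together with $\|\y\| \leq 1$ yields $\|\x + s\y\| \leq \|\x\| + |s|$, and combining this with \eqref{EQ:markovk0} applied pointwise to $p$ at $\x + s\y$ produces the envelope
\[
|Q(s)| \leq \|p\|_\infty \cdot T_d(a + |s|) \quad \text{for all } s \in \R.
\]
Here we use the monotonicity of $T_d$ on $[1, \infty)$ and $T_d \geq 1$ there to absorb the trivial bound $|Q(s)| \leq \|p\|_\infty$ that applies when $\|\x + s\y\| \leq 1$.

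The heart of the argument is to convert this pointwise envelope into the derivative estimate $|Q^{(k)}(0)| \leq \|p\|_\infty \cdot T_d^{(k)}(a)$, which unwinds to both cases of \eqref{EQ:markovk>0}. The extremal polynomial is $s \mapsto T_d(a + s)$: it saturates the envelope on $[0,\infty)$ (with $T_d(a - s)$ playing the symmetric role on $(-\infty, 0]$) and has $k$-th derivative $T_d^{(k)}(a)$ at $s = 0$. The desired bound is then a sharp, shifted variant of V.A.~Markov's inequality; concretely, one would show via Lagrange interpolation at the appropriately shifted Chebyshev extremal nodes that any degree-$d$ polynomial dominated by the envelope is dominated by $\|p\|_\infty \cdot T_d(a + s)$ strongly enough to transfer the estimate to the $k$-th derivative at $0$. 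Making this precise — in particular treating $\|\x\| \leq 1$ and $\|\x\| \geq 1$ uniformly under one norm — is the technical core of the argument and the hardest step, and it is precisely where the multivariate Markov-type inequalities of \cite{Skalyga:markov, Harris:markov} are invoked.
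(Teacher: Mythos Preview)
The paper does not provide its own proof of \Cref{THM:genmarkov}; it is quoted as a known result from \cite{Skalyga:markov} and \cite{Harris:markov}. So there is no ``paper's proof'' to compare against, and your sketch stands on its own.

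On the content of your sketch: the $k=0$ case is handled cleanly and correctly by restriction to the radial line and Chebyshev's classical extrapolation inequality. For $k\geq 1$, your envelope $|Q(s)| \leq \|p\|_\infty \cdot T_d(a+|s|)$ is derived correctly from the $k=0$ case and the triangle inequality. However, the step you identify as ``the heart of the argument'' --- passing from this growing envelope to the sharp bound $|Q^{(k)}(0)| \leq \|p\|_\infty \cdot T_d^{(k)}(a)$ --- is not merely a technicality but precisely the substance of the theorems you are trying to prove. Standard interpolation-at-Chebyshev-nodes arguments (Duffin--Schaeffer, V.A.~Markov) do not apply directly to an envelope of the form $T_d(a+|s|)$, which is not polynomial in $s$ and does not equioscillate; establishing the correct extremal inequality here is exactly what \cite{Skalyga:markov, Harris:markov} accomplish. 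So your proposal correctly isolates the univariate core, but ultimately defers the genuine difficulty back to the cited references rather than resolving it independently.
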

We will apply \Cref{THM:genmarkov} for the norm $\|\x\|_{\infty} = \max_{i=1, \ldots, n} |x_i|$, whose unit ball $\{ \x \in \R^n : \| \x \|_\infty \leq 1\}$ is the hypercube $\cube{n}$. The following lemma allows us to relate the supremum norm of polynomials on scaled unit balls (i.e., scaled hypercubes), which will be convenient in the proofs of our main results.
\begin{lemma}[{cf.~\cite[Eq. (3)]{Stengle:negative}}] \label{LEM:Markovscaled}
Let $\| \cdot \|$ be any norm on $\R^n$, and let $p \in \R[\x]$ be a polynomial of degree $d$. Then for any $\delta \in (0, 1)$, we have:
\[
    \max_{\|\x\|^2 \leq \frac{1}{1-\delta}} |p(\x)| \leq T_d \big(\frac{1}{1-\delta} \big) \cdot \max_{\|\x\|^2 \leq 1- \delta} |p(\x)|.
\]
% {where $\Cheby_d \in \R[x]$ is the Chebyshev polynomial of degree $d$.}
\end{lemma}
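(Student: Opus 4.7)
The plan is to reduce the inequality to a direct application of the Markov Brothers' inequality in the form \cref{EQ:markovk0}, via a simple linear rescaling. The key observation is that the ratio of the two radii appearing in the statement is exactly $1/(1-\delta)$, since $\sqrt{1/(1-\delta)} / \sqrt{1-\delta} = 1/(1-\delta)$, and this is precisely the quantity showing up as the argument of $T_d$. This suggests introducing the auxiliary polynomial
\[
    q(\x) := p\big(\sqrt{1-\delta}\, \x\big),
\]
which has degree $d$ and satisfies
\[
    \|q\|_\infty \;=\; \max_{\|\x\|\leq 1} |q(\x)| \;=\; \max_{\|\y\|\leq \sqrt{1-\delta}} |p(\y)| \;=\; \max_{\|\y\|^2 \leq 1-\delta} |p(\y)|,
\]
already matching the right-hand side of the claim up to the factor $T_d(1/(1-\delta))$.

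To control the left-hand side, I would take an arbitrary $\y$ with $\|\y\|^2 \leq 1/(1-\delta)$ and write $\y = \sqrt{1-\delta}\, \x$, so that $\|\x\| \leq 1/(1-\delta)$ and $|p(\y)| = |q(\x)|$. If $\|\x\|\leq 1$, then trivially $|q(\x)| \leq \|q\|_\infty$, and the desired bound follows from $T_d(1/(1-\delta)) \geq T_d(1) = 1$. If instead $\|\x\|\geq 1$, then \cref{EQ:markovk0} applied to $q$ yields $|q(\x)| \leq \|q\|_\infty \cdot T_d(\|\x\|)$, and the monotonicity of $T_d$ on $[1,\infty)$ (recorded just after \cref{EQ:chebydef}) gives $T_d(\|\x\|) \leq T_d(1/(1-\delta))$. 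Taking the maximum over all such $\y$ and substituting the expression for $\|q\|_\infty$ then closes the argument.

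I do not anticipate any real obstacle: the lemma is essentially a rescaled restatement of the $k=0$ case of the Markov Brothers inequality. The only care needed is to choose the scaling factor $\sqrt{1-\delta}$ so that the unit ball of $q$ corresponds to the inner ball $\{\|\y\|^2 \leq 1-\delta\}$ of $p$, and to use monotonicity of $T_d$ on $[1,\infty)$ in order to replace $T_d(\|\x\|)$ by $T_d(1/(1-\delta))$ uniformly in the final estimate.
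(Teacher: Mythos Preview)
Your proposal is correct and follows essentially the same approach as the paper: both introduce the rescaled polynomial $q(\x) = p(\sqrt{1-\delta}\,\x)$, identify $\|q\|_\infty$ with the maximum of $|p|$ over the inner ball, and then apply \cref{EQ:markovk0} together with the monotonicity of $T_d$ on $[1,\infty)$ to bound the maximum over the outer ball. The paper presents this as a single chain of (in)equalities while you make the case split $\|\x\|\le 1$ versus $\|\x\|\ge 1$ explicit, but the underlying argument is identical.
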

\begin{proof}
    Using~\cref{EQ:markovk0}, we find that:
    \begin{align*}
        \max_{\|\x\|^2 \leq \frac{1}{1-\delta}} |p(\x)| &= \max_{\|\y\| \leq \frac{1}{1-\delta}} \big| p \big(\y \cdot \sqrt{1-\delta}\big)\big| \\
        &\leq \Cheby_d \big(\frac{1}{1-\delta}\big) \cdot \max_{\|\y\| \leq 1} \big| p \big(\y \cdot \sqrt{1-\delta}\big)\big| \\
        &= \Cheby_d \big(\frac{1}{1-\delta}\big) \cdot \max_{\|\x\|^2 \leq 1-\delta} |p(\x)|.
    \end{align*}
    To obtain the first equality, we simply change variables $\y = \x/\sqrt{1-\delta}$. Then, to get the inequality, we apply~\cref{EQ:markovk0} to the polynomial $\y \to p(\y \cdot \sqrt{1-\delta})$, noting that $\max_{|x| \leq \frac{1}{1-\delta}} \Cheby_d(x) = \Cheby\big(\frac{1}{1-\delta}\big)$. Finally, we change variables again to conclude. 
\end{proof}

In order to apply the inequalities stated above, we need the following facts on Chebyshev polynomials. These are known or follow quickly from results in the literature, but we restate them for ease of reference and completeness.
\begin{lemma}
\label{LEM:Markov} For any $k \geq 0$, we have:
    \[
        |T_d^{(k)}(x)| \leq 
        % \frac{d^2 (d^2-1^2)\ldots(d^2-(k-1)^2)}{1\cdot3\cdot\ldots\cdot(2k-1)} \cdot |T_d(x)| \leq d^{2k} 
        \begin{cases} 
            d^{2k} \quad & |x| \leq 1, \\
            d^{2k} \cdot |T_d(x)| & |x| \geq 1.
        \end{cases}
    \]
\end{lemma}
\begin{proof}
It is well known (see, e.g., \cite{Shadrin:Markov}) that, for any $k \geq 0$, we have
\[
\max_{-1 \leq x \leq 1} |T_d^{(k)}(x)| = T_d^{(k)}(1) = \frac{d^2 (d^2-1^2)\ldots(d^2-(k-1)^2)}{1\cdot3\cdot\ldots\cdot(2k-1)} \leq d^{2k}.
\]
Then, since $T_d^{(k)}$ is a polynomial of degree at most~$d$, we may apply~\eqref{EQ:markovk0} to find that
\[
    |T_d^{(k)}(x)| \leq \max_{-1 \leq y \leq 1} |T_d^{(k)}(y)| \cdot T_d(|x|) \leq d^{2k} \cdot |T_d(x)|, \quad \text{ when } |x| \geq 1. \qedhere
\]
\end{proof}

\begin{lemma}[cf.~{\cite[pf. of Thm.~4]{Stengle:negative}}] \label{LEM:chebybound}
Let $1 > \delta > 0$. Then, if $d = O(1/\sqrt{\delta})$ {and $\Cheby_d \in \R[x]$ is the Chebyshev polynomial of degree $d$}, we have:
\[
1 \leq T_d\big(\frac{1}{1-\delta} \big) = O(1) \quad (\delta \to 0).
\]
Furthermore, there exists an absolute constant $1 \le \Chebyconst \le e^5$ such that for any $d \geq 2$ and $\delta \leq 1/d^2$, we have $T_d\big(\frac{1}{1-\delta} \big) \leq \Chebyconst$.
\end{lemma}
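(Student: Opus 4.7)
The plan is to exploit directly the closed form for $T_d$ on $[1,\infty)$ given by the second case of \cref{EQ:chebydef}. Writing $\alpha := x + \sqrt{x^2-1}$, we note that the second root $x - \sqrt{x^2-1} = 1/\alpha$, so
\[
T_d(x) \;=\; \tfrac12\bigl(\alpha^d + \alpha^{-d}\bigr), \qquad x \geq 1.
\]
Since $\alpha \geq 1$ and $T_d$ is monotonically increasing on $[1,\infty)$ with $T_d(1)=1$ (as recalled after \cref{EQ:chebydef}), the lower bound $T_d\bigl(1/(1-\delta)\bigr) \geq 1$ is immediate, and we have the useful estimate $T_d(x) \leq \alpha^d$ for the upper bound.

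Next I would plug in $x = 1/(1-\delta)$ and compute explicitly
\[
x^2 - 1 \;=\; \frac{\delta(2-\delta)}{(1-\delta)^2},
\qquad
\alpha \;=\; \frac{1 + \sqrt{\delta(2-\delta)}}{1-\delta} \;\leq\; \frac{1+\sqrt{2\delta}}{1-\delta}.
\]
Taking logarithms and applying the elementary inequalities $\log(1+y) \leq y$ for $y \geq 0$ and $-\log(1-\delta) \leq \delta/(1-\delta)$, I obtain
\[
\log \alpha \;\leq\; \sqrt{2\delta} + \frac{\delta}{1-\delta},
\qquad
d \log \alpha \;\leq\; d\sqrt{2\delta} + \frac{d\delta}{1-\delta}.
\]

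For the first (asymptotic) statement, if $d = O(1/\sqrt{\delta})$ then both terms on the right are $O(1)$ as $\delta \to 0$ (the second is in fact $o(1)$), so $\alpha^d = O(1)$ and hence $T_d\bigl(1/(1-\delta)\bigr) \leq \alpha^d = O(1)$. For the explicit constant, I would impose $\delta \leq 1/d^2$ with $d \geq 2$: the first term is then bounded by $\sqrt{2}$, and since $\delta \leq 1/4$ we get $\delta/(1-\delta) \leq 4\delta/3$, so the second is bounded by $4/(3d) \leq 2/3$. Altogether $d\log\alpha \leq \sqrt{2} + 2/3 < 3$, giving $T_d\bigl(1/(1-\delta)\bigr) \leq e^3 \leq e^5$, which suffices for the stated bound $\Chebyconst \leq e^5$.

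There is no real obstacle here; the proof is a direct computation with the hyperbolic form of $T_d$ and standard concavity/convexity inequalities for $\log$. The only thing to watch is the bookkeeping of constants in the explicit $e^5$ bound, and in particular to verify that the two small regimes ($d=2$ with $\delta$ just below $1/4$, versus $d$ large with $\delta$ very small) are both dominated by the bound $d\log\alpha \leq \sqrt{2} + 2/3$, so that the slack in using $e^5$ rather than the tighter $e^3$ leaves ample room.
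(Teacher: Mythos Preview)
Your proof is correct and follows essentially the same approach as the paper: both start from the bound $T_d(x)\le (x+\sqrt{x^2-1})^d$ for $x\ge 1$ and then control $x+\sqrt{x^2-1}$ at $x=1/(1-\delta)$ by a quantity of the form $1+O(\sqrt{\delta})$. The only difference is cosmetic arithmetic---the paper first replaces $1/(1-\delta)$ by $1+2\delta$ and bounds the base by $1+5\sqrt{\delta}$, whereas you take logarithms directly; your route yields the slightly sharper $\Chebyconst\le e^3$, comfortably within the stated $e^5$.
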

\begin{proof}
    From~\cref{EQ:chebydef}, we find that for any $x \geq 1$:
    \begin{equation} \label{EQ:Chebyboundallx}
        \Cheby_d(x) \leq \big(x + \sqrt{x^2 - 1}\big)^d.
    \end{equation}
    As $\frac{1}{1-\delta} = 1 + \delta + O(\delta^2)$, we may use~\cref{EQ:Chebyboundallx} to get:
    \begin{align*}
        \Cheby_d(\frac{1}{1-\delta}) &\leq \big(1 + \delta + O(\delta^2) + \sqrt{1 + 2\delta + O(\delta^2) - 1}\big)^d \\
        &\leq \big(1 + O(\sqrt{\delta})\big)^d.
    \end{align*}
    It follows that $\Cheby_d(\frac{1}{1 - \delta}) = O(1)$ if $d = O(1/\sqrt{\delta})$. Now, if $d \geq 2$ and $\delta \leq 1/d^2$, we have $\frac{1}{1-\delta} \leq 1 + 2\delta$, and so by~\cref{EQ:Chebyboundallx} we get:
    \begin{align*}
        \Cheby_d(\frac{1}{1-\delta}) 
        &\leq \big(1 + 2\delta + \sqrt{(1 + 2\delta)^2 - 1}\big)^d. \\
        &\leq \big(1 + 2\delta + \sqrt{4\delta + 4\delta^2}\big)^d \\
        &\leq \big(1 + 5\sqrt{\delta}\big)^d \leq (1 + 5/d)^d \leq e^5. \qedhere
    \end{align*}
\end{proof}

\subsection{Schm\"udgen's Positivstellensatz for scaled hypercubes}
For our arguments in \Cref{SEC:positive}, we need an effective version of Schm\"udgen's Positivstellensatz for scaled hypercubes ${[-\eta, \eta]^n}$, with $\eta > 0$. \Cref{THM:boxSchmudgen} carries over to this setting in a straightforward way.
\begin{corollary}\label{cor::scled_schmudgen}
    For $\eta > 0$, write $\D = [-\eta, \eta]^n$. Let $f \in \R[\x]$ be a polynomial of degree $d$, and let $f_{\min, \D}, f_{\max, \D} > 0$ be the minimum and maximum of $f$ on $\D$, respectively. Then we have:
\[
    f \in \preo(\eta^2-x_1^2, \ldots, \eta^2-x_n^2)_{(r+1)n}, \quad \text{for }
     r \geq \max \bigg\{ \left( C(n, d) \cdot  \frac{f_{\max, \D}}{f_{\min, \D}} \right)^{1/2}, ~ \pi d \sqrt{2n} \bigg\}.
\]
Here, the constant $C(n,d)$ is the same as in \Cref{THM:boxSchmudgen}. 
\end{corollary}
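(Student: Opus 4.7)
The plan is to reduce the scaled statement to the unscaled \Cref{THM:boxSchmudgen} by a linear change of variables. Set $\y = \x/\eta$, so that $\x \in \D = [-\eta, \eta]^n$ corresponds to $\y \in \cube{n} = [-1,1]^n$, and define the rescaled polynomial $\tilde f(\y) := f(\eta \y) \in \R[\y]$. Then $\tilde f$ has the same degree $d$ as $f$, and its minimum and maximum on $\cube{n}$ coincide with $f_{\min, \D}$ and $f_{\max, \D}$ respectively. In particular, $\tilde f \in \pos(\cube{n})$.

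Next I would apply \Cref{THM:boxSchmudgen} to $\tilde f$: for any $r$ satisfying the bound in the statement, there exist sums of squares $\sigma_I \in \Sigma[\y]$ with $\deg(\sigma_I \, g_I^{\cube{n}}) \leq (r+1)n$ such that
\[
    \tilde f(\y) = \sum_{I \subseteq [n]} \sigma_I(\y) \prod_{i \in I} (1-y_i^2),
\]
where $g_i^{\cube{n}}(\y) = 1-y_i^2$. Substituting back $\y = \x/\eta$ gives
\[
    f(\x) = \tilde f(\x/\eta) = \sum_{I \subseteq [n]} \sigma_I(\x/\eta) \prod_{i \in I} \big(1-(x_i/\eta)^2\big) = \sum_{I \subseteq [n]} \tilde\sigma_I(\x) \prod_{i \in I} (\eta^2 - x_i^2),
\]
where $\tilde\sigma_I(\x) := \eta^{-2|I|} \sigma_I(\x/\eta)$. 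The two key observations are that (i) $\tilde\sigma_I \in \Sigma[\x]$, since SOS polynomials are closed under invertible affine substitutions and multiplication by the positive constant $\eta^{-2|I|}$; and (ii) the degree is preserved, since $\deg \tilde\sigma_I = \deg \sigma_I$ and $\deg(\eta^2 - x_i^2) = \deg(1-y_i^2) = 2$, so $\deg(\tilde\sigma_I \prod_{i\in I}(\eta^2-x_i^2)) \leq (r+1)n$.

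This yields exactly the required membership $f \in \preo(\eta^2 - x_1^2, \ldots, \eta^2 - x_n^2)_{(r+1)n}$ with the same bound on $r$, since the threshold of \Cref{THM:boxSchmudgen} depends only on $n$, $d$, and the ratio $f_{\max}/f_{\min}$, all of which are invariant under our rescaling. There is no substantive obstacle here: the argument is a direct transport of the statement along the diffeomorphism $\x \mapsto \x/\eta$, and the only care needed is the bookkeeping of the $\eta^{-2|I|}$ factors to confirm that the SOS structure and degree bound are preserved.
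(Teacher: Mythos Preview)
Your proposal is correct and follows essentially the same approach as the paper: both define the rescaled polynomial $\tilde f(\y)=f(\eta\y)$, apply \Cref{THM:boxSchmudgen} on $\cube{n}$, and then substitute $\y=\x/\eta$ to recover a preordering representation with multipliers $\eta^{-2|I|}\sigma_I(\x/\eta)$. Your explicit remarks that the SOS property and degrees are preserved under this rescaling are a nice touch and make the bookkeeping slightly more transparent than in the paper.
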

\begin{proof}
Consider the polynomial $g(\x) = f(\eta \x)$, which is of degree $d$, and satisfies $g_{\min, [-1, 1]^n} = f_{\min, [-\eta, \eta]^n}$ and $g_{\max, [-1, 1]^n} = f_{\max, [-\eta, \eta]^n}$. We can apply \Cref{THM:boxSchmudgen} to write
\[
    f(\eta \x) = g(\x) = \sum_{I \subseteq [n]} \sigma_I(\x) \prod_{i \in I} (1-x_i^2)
\]
with appropriate degree bounds on the sums of squares $\sigma_I$. But then,
\begin{align*}
    f(\x) &= \sum_{I \subseteq [n]} \sigma_I(\x/\eta) \prod_{i \in I} (1-(x_i/\eta)^2) \\
    &= \sum_{I \subseteq [n]} \sigma_I(\x/\eta) \prod_{i \in I} \frac{1}{\eta^2}(\eta^2-x_i^2) \\
    &= \sum_{I \subseteq [n]} \eta^{-2|I|} \cdot \sigma_I(\x/\eta) \prod_{i \in I} (\eta^2-x_i^2),
\end{align*}
which is a decomposition of $f$ in $\preo(\eta^2 - x_1^2, \ldots, \eta^2 - x_n^2)$ of the desired degree.
\end{proof}

\section{Proof of the upper degree bound} \label{SEC:positive}

This section is dedicated to the proof of \Cref{THM:main} and \Cref{THM:main2}.

We start by recalling the technique used to prove general effective versions of Putinar's Positivstellensatz in~\cite{NieSchweighofer:putinar} and \cite{BaldiMourrain:putinar, BaldiMourrainParusinski:putinar}.
There, the authors reduce the question of representing a strictly positive polynomial~ $f$ on a general compact semialgebraic set $\cS(\vb g)$, to the question of representing strictly positive polynomials on a simpler compact domain $\D = \cS(\vb h) \supseteq \cS(\vb g)$.
More precisely, they construct a polynomial $p \in \quadr(\vb g)$ in such a way that $f- p > 0$ on~$\D$. 
As an effective version of Schm\"udgen's Positivstellensatz is available for the set $\D$, they then deduce that $f-p \in \preo(\vb h)$ (with an appropriate degree bound). Using the Archimedean hypothesis, we have $\preo(\vb h) \subseteq\quadr(\vb g)$, which gives the final representation $f = (f - p) + p \in \quadr(\vb g)$. The construction of the polynomial $p \in \quadr(\mg)$ and the effective Schm\"udgen's Positivstellensatz on $\D$ are the key parts of the proof: the different constructions in~\cite{NieSchweighofer:putinar} and~\cite{BaldiMourrain:putinar, BaldiMourrainParusinski:putinar} lead to an exponential and polynomial degree bound for the representation of $f \in \quadr(\mg)$, respectively. We refer to \cite{BaldiMourrain:putinar} for a more detailed list of references where this technique has been exploited.

\subsection{Overview of the proof}
Compared with  the general effective Putinar's Positivstellensatz, for the investigation of the special case $\cS(\vb g) = \cube{n}$ we make an important change of perspective: we consider a domain $\D$ that depends on $f$. Namely, we choose $\D$ to be a close enough outer approximation of $\cube{n}$, so that $f$ is not only strictly positive on $\cube{n}$, say $f \ge \funcmin > 0$ on $\cube{n}$, but also $f \ge \frac{1}{2}\funcmin > 0 $ on $\D$. In this way we can avoid using the perturbation polynomial $p$, and apply directly the representation results on the outer approximation $\D$. Concretely, we proceed as follows (see also \Cref{FIG:overview}).

\medskip\noindent\textbf{a. Selecting the outer domain.} We choose $\D = [-\eta, \eta]^n$ to be a scaled hypercube containing $\cube{n}$, where $\eta > 1$ will be chosen in such a way that:
\[
\min_{\x \in \D} f(\x) \geq \frac{1}{2} \min_{\x \in \cube{n}} f(\x) > 0,
\]
see~\Cref{lem::choice_of_epsilon}.

\medskip\noindent\textbf{b. Obtaining a Schm\"udgen-type representation} We then apply \Cref{cor::scled_schmudgen}, a scaled version of \Cref{THM:boxSchmudgen} on $\D =\cS(\eta^2 - x_1^2, \dots, \eta^2 - x_n^2)$, to represent $f$ as an element of the preordering $\preo(\eta^2 - x_1^2, \dots, \eta^2 - x_n^2)$, with appropriate degree bounds.

\medskip\noindent\textbf{c. Lifting the representation}
Finally, we lift the representation of $f$ from the preordering $\preo(\eta^2 - x_1^2, \dots, \eta^2 - x_n^2)$ to the quadratic module $\quadr(1-x_1^2, \dots, 1-x_n^2)$.
For this purpose, we make use of the metric balls:
\[
    \{ \x \in \R^n : n - \metric{\x}{2q} \geq 0 \}, \quad \text{ where } \metric{\x}{2q} = x_1^{2q} + \ldots + x_n^{2q}   \quad (q \in \N).
\]
Choosing $q \in \N$ large enough so that $\eta \geq \sqrt[2q]{n}$, we show in \Cref{lem::basic2} and \Cref{thm::from_preordering_to_module} that:
\begin{align*}
    \preo(\eta^2 - x_1^2, \dots, \eta^2 - x_n^2) 
    &\subseteq \preo(n - \metric{\x}{2q}) \\
    &= \quadr(n - \metric{\x}{2q}) \\
    &\subseteq \quadr(1-x_1^2, \dots, 1-x_n^2) = \quadr (\cube{n})
\end{align*}
with appropriate degree bounds for the truncated versions. Using the Schm\"udgen-type representation obtained in the previous step, this will give us a Putinar-type representation with appropriate degree bounds:
\[
    f \in \preo(\eta^2 - x_1^2, \dots, \eta^2 - x_n^2) \subseteq\quadr(\cube{n}).
\]

\begin{figure}
    \centering
    \begin{tikzpicture}[scale=1.5]

% Axes
% \draw[->,ultra thick] (-5,0)--(5,0) node[right]{$x$};
% \draw[->,ultra thick] (0,-5)--(0,5) node[above]{$y$};

\draw[thick, fill=gray, fill opacity=0.2] (-1,-1) -- (1,-1) -- (1,1) -- (-1,1) -- (-1,-1);  

\draw[thick, gray] (-1.75, -1.75) -- (1.75, -1.75);
\draw[thick, gray] (-1.75, -1.75) -- (-1.75, 1.75);

\draw (0, -1.72) -- (0, -1.78) node[below] {$0$}; 
\draw (-1.72, 0) -- (-1.78, 0) node[left] {$0$}; 
\draw (-1.42, -1.72) -- (-1.42, -1.78) node[below] {$-\sqrt{2}$}; 
\draw (1.42, -1.72) -- (1.42, -1.78) node[below] {$\sqrt{2}$}; 
\draw (-1.72, -1.42) -- (-1.78, -1.42) node[left] {$-\sqrt{2}$}; 
\draw (-1.72, 1.42) -- (-1.78, 1.42) node[left] {$\sqrt{2}$}; 

% \draw (-1, -1.72) -- (-1, -1.78) node[below] {$-1$}; 
% \draw (1, -1.72) -- (1, -1.78) node[below] {$1$}; 
% \draw (-1.72, -1) -- (-1.78, -1) node[left] {$-1$}; 
% \draw (-1.72, 1) -- (-1.78, 1) node[left] {$1$}; 

\begin{scope}[scale=1.43]
    \draw[thick, fill=gray, fill opacity=0.2] (-1,-1) -- (1,-1) -- (1,1) -- (-1,1) -- (-1,-1);
    \draw[thick, dashed, fill=gray, fill opacity=0.2] (0,0) circle [radius=1cm];
\end{scope}

\draw[<->] (-1.41, 1.52) -- (0, 1.52) node [midway, above] {$\eta = \sqrt{2}$};
% \draw (0, 0) node[circle,fill,inner sep=1pt] {};
% \draw (0, 0) node[above right] {0};

\begin{scope}[xshift=120]
\draw[thick, fill=gray, fill opacity=0.2] (-1,-1) -- (1,-1) -- (1,1) -- (-1,1) -- (-1,-1);

\draw[thick, gray] (-1.75, -1.75) -- (1.75, -1.75);
\draw[thick, gray] (-1.75, -1.75) -- (-1.75, 1.75);

\draw (0, -1.72) -- (0, -1.78) node[below] {$0$}; 
\draw (-1.72, 0) -- (-1.78, 0) node[left] {$0$};
\draw (-1.22, -1.72) -- (-1.22, -1.78) node[below] {$-\sqrt[4]{2}$}; 
\draw (1.22, -1.72) -- (1.22, -1.78) node[below] {$\sqrt[4]{2}$}; 
\draw (-1.72, -1.22) -- (-1.78, -1.22) node[left] {$-\sqrt[4]{2}$}; 
\draw (-1.72, 1.22) -- (-1.78, 1.22) node[left] {$\sqrt[4]{2}$}; 

\draw [dashed, thick, fill=gray, fill opacity=0.2] plot [smooth cycle] coordinates {(-1,1) (1,1) (1,-1) (-1,-1)};

% \draw (-1, -1.72) -- (-1, -1.78) node[below] {$-1$}; 
% \draw (1, -1.72) -- (1, -1.78) node[below] {$1$}; 
% \draw (-1.72, -1) -- (-1.78, -1) node[left] {$-1$}; 
% \draw (-1.72, 1) -- (-1.78, 1) node[left] {$1$}; 

\begin{scope}[scale=1.22]
    \draw[thick, fill=gray, fill opacity=0.2] (-1,-1) -- (1,-1) -- (1,1) -- (-1,1) -- (-1,-1);
\end{scope}

\draw[<->] (-1.21, 1.32) -- (0, 1.32) node [midway, above] {$\eta = \sqrt[4]{2}$};
\end{scope}

% \draw (0, -2.5) node {$\cS(1-x_1^2, \ldots, 1-x_n^2)$};
% \draw (0, -3) node[] {$\cS(2-\|x\|_{2})$};
% \draw (0, -3.5) node[] {$\cS(2-x_1^2, \ldots, 2-x_n^2)$};

\end{tikzpicture}
    \captionsetup{singlelinecheck=off}
    \caption[.]{Example of the key regions involved in the proof of \Cref{THM:main} for $n=2$ and $q = 1, 2$ (on the left, right, respectively). The shaded regions depict the sets:
    \begin{displaymath}    
    \cube{n} = [-1, 1]^n \subseteq \cS(n-\metric{\x}{2q}) \subseteq [-\eta, \eta]^n = \D. 
    \end{displaymath} \vspace{-2em}
    }
\label{FIG:overview}
\end{figure}
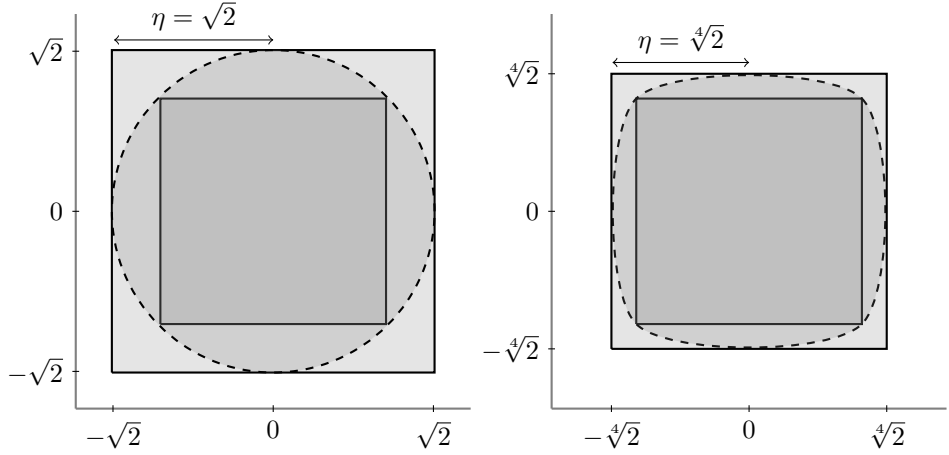

\subsection{Proof of \Cref{THM:main}}
% Recall that we denote
% $\quadr(\cube{n})_r = \quadr(1-x_1^2, \dots , 1-x_n^2)_r$
% % = \{ \sigma_0 + \sigma_{1} (1-x_1^2)  + \dots + \sigma_n (1-x_n^2) : \sigma_i \in \Sigma[\x],~\mathrm{deg}(\sigma_0) \leq r,~\mathrm{deg}(\sigma_i(1-x_i^2)) \leq r \}
% % $ 
% the truncated quadratic module at degree $r$ associated to the unit hypercube $\cube{n}$, see \Cref{SEC:notations}.
To present and describe our proof in a compact way, we introduce the following definition.
\begin{definition}
    Let $\quadr(\mg)$ be a (finitely generated) quadratic module.
    We say that a tuple of polynomials $\vb h = (h_1, \dots, h_s) \subseteq \quadr(\mg)$ has \emph{degree shift} $\ell$ with respect to $\vb g$ if $h_i \in \quadr(\vb g)_{\deg h_i +\ell}$ for all $i \in [s]$.
\end{definition}

% \vspace{-0.2cm}
\pagebreak
We will make use of the following elementary lemma, that we state in general for future reference.
\begin{lemma}[Degree shift]\label{lem::degree_shift}
Let $\vb h = (h_1, \dots, h_s) \subseteq \quadr(\mg)$ be a tuple of polynomials with degree shift $\ell$ w.r.t. $\mg$.
% Let $h_1, \dots h_r$ be polynomials of degrees $\deg h_i$ and let $\ell \in \N$ such that:
% \[h_i \in \quadr(g_1, \dots g_m)_{\deg h_i+\ell} = \quadr(\vb g)_{\deg h_i+\ell} \text{ for all } i \in [r].\] 
Then, for all $d \in \N$,
\begin{enumerate}[label=(\roman*)]
    \item $\quadr(\vb h)_d \subseteq\quadr(\vb g)_{d+\ell}$;
    \item $\preo(\vb h)_{d} \subseteq\quadr(g_1)_{d + s \ell}$ if $\vb g = (g_1)$ consists of a single polynomial.
\end{enumerate}
\end{lemma}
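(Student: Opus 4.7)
The plan is to unfold definitions and substitute each $h_i$ with its given bounded-degree representation in $\quadr(\vb g)$, reassembling the result into the required form. The degree bookkeeping is elementary in each case.

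For \textbf{part (i)}, I would take $p \in \quadr(\vb h)_d$ in the form $p = \sigma_0 + \sum_i \sigma_i h_i$ with $\sigma_i \in \Sigma[\x]$ and $\deg(\sigma_i h_i) \leq d$. By hypothesis, each $h_i$ admits a representation $h_i = \tau_{i,0} + \sum_j \tau_{i,j} g_j$ with $\tau_{i,j} \in \Sigma[\x]$ satisfying $\deg(\tau_{i,0}), \deg(\tau_{i,j} g_j) \leq \deg h_i + \ell$. Substituting and grouping the constant term and each $g_j$-term yields
\[
p = \Bigl(\sigma_0 + \sum_i \sigma_i \tau_{i,0}\Bigr) + \sum_j \Bigl(\sum_i \sigma_i \tau_{i,j}\Bigr) g_j,
\]
where the bracketed coefficients are sums of squares (sums of products of sums of squares). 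The required degree bound is immediate: $\deg(\sigma_i \tau_{i,j} g_j) \leq (d - \deg h_i) + (\deg h_i + \ell) = d + \ell$, and the same bound holds for each $\sigma_i \tau_{i,0}$ and for $\sigma_0$. Hence $p \in \quadr(\vb g)_{d+\ell}$.

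For \textbf{part (ii)}, I would start with $p = \sum_{I \subseteq [s]} \sigma_I h_I \in \preo(\vb h)_d$ and write each $h_i = a_i + b_i g_1$ with $a_i, b_i \in \Sigma[\x]$ and $\deg a_i, \deg(b_i g_1) \leq \deg h_i + \ell$. The central step is to expand $h_I = \prod_{i \in I}(a_i + b_i g_1)$ and absorb the resulting powers of $g_1$ back into $\quadr(g_1)$ by means of the parity identities $g_1^{2k} = (g_1^k)^2$ and $g_1^{2k+1} = g_1 \cdot (g_1^k)^2$. This produces a decomposition $h_I = \alpha_I + \beta_I g_1$ with $\alpha_I, \beta_I \in \Sigma[\x]$. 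Since $\R[\x]$ is an integral domain, $\deg h_I = \sum_{i \in I} \deg h_i$, and each term in the expansion has degree at most $\sum_{i \in I}(\deg h_i + \ell) = \deg h_I + |I|\ell$, so that $\deg \alpha_I, \deg(\beta_I g_1) \leq \deg h_I + |I|\ell$. Multiplying by $\sigma_I$ (whose degree is at most $d - \deg h_I$), summing over $I \subseteq [s]$, and using $|I| \leq s$ yields a representation $p \in \quadr(g_1)_{d + s\ell}$.

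The only point requiring genuine care is the parity trick in part (ii): one must observe that $\quadr(g_1) = \Sigma[\x] + \Sigma[\x] \cdot g_1$ is closed under multiplication, with only a predictable $|I|$-fold degree inflation. This is precisely what allows a product of $|I|$ elements of $\quadr(g_1)$ to collapse back into $\quadr(g_1)$ rather than escaping into a higher-order preordering, and it is the reason part (ii) is restricted to the single-constraint case $\vb g = (g_1)$.
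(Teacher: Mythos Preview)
Your proposal is correct and follows essentially the same approach as the paper: substitution of the bounded-degree representations of each $h_i$, followed by straightforward degree bookkeeping. The only cosmetic difference is that the paper phrases the collapse in part (ii) as ``the product lies in $\preo(g_1)$, which equals $\quadr(g_1)$ for a single generator,'' whereas you spell out the parity identities $g_1^{2k} = (g_1^k)^2$, $g_1^{2k+1} = g_1(g_1^k)^2$ explicitly; these are the same observation.
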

\begin{proof}
    {The proof is simply by substitution and tracking of degrees.}

    {We set $h_0 = g_0=1$ for notational convenience, and we start by proving the first part. Let $q = \sum_{k=0}^r \overline{\sigma}_{k}h_k \in \quadr(\vb h)_d$: we want to show that $q \in \quadr(\vb g)_{d + \ell}$. By assumption, there exists a representation $h_i = \sum_{j=0}^m \sigma_{i,j} g_j \in \quadr(\vb g)_{\deg h_i +\ell}$. Notice that, by definition, $\deg (\sigma_{i,j} g_j) = \deg \sigma_{i,j} + \deg g_j \le \deg h_i +\ell$ and $\deg (\overline{\sigma}_{k}h_k )= \overline{\sigma}_{k}+\deg h_k \le d$ for all $i,j,k$. Therefore:
    \[
        \deg (\overline{\sigma}_{k} \sigma_{k,j} g_j) = \deg \overline{\sigma}_{k} + \deg \sigma_{k,j} + \deg  g_j \le d+\ell
    \]
    and finally
    \[
        q = \sum_{k=0}^r \overline{\sigma}_{k}h_k = \sum_{j=0}^m \left(\sum_{k=0}^r \overline{\sigma}_{k} \sigma_{k,j}\right) g_j \in \quadr(\vb g)_{d + \ell}
    \]
    concluding the proof of the first part.}

    {For the second part, we proceed in a similar way. Let $p = \sum_{I \subseteq [m]} \overline{\sigma}_{I} h_I \in \preo(\vb h)_{d}$: we want to show that $p \in \preo(g_1)_{d + s \ell}$. Choose a representation $h_i = \sigma_{0,i} + \sigma_{1,i} g_1 \in \quadr(g_1)_{\deg h_i + \ell}$, and notice that, by definition, $\deg \overline{\sigma}_{I} \le d - \deg h_I$ and
    \[
        \prod_{i\in I} (\sigma_{0,i} + \sigma_{1,i} g_1) \in \preo(g_1)_{\sum_{i \in I}\left( \deg h_i + \ell \right)} = \preo(g_1)_{\deg h_I + \abs{I}\ell} \subset \preo(g_1)_{\deg h_I + s \ell}.
    \]
    Therefore,
    \[
        \overline{\sigma}_{I} \prod_{i\in I} (\sigma_{0,i} + \sigma_{1,i} g_1) \in \preo(\vb g)_{\deg h_I + s \ell + \deg \overline{\sigma}_{I}} \subseteq\preo(g_1)_{\deg h_I + s \ell + d - \deg h_I} = \preo(g_1)_{d + s \ell}
    \]
    and finally:
    \[
        p = \sum_{I \subseteq [r]} \sigma_{I} h_I = \sum_{I \subseteq [r]} \sigma_{I} \prod_{i\in I} h_i = \sum_{I \subseteq [r]} \sigma_{I} \prod_{i\in I} (\sigma_{0,i} + \sigma_{1,i} g_1) \in \preo(g_1)_{d + s \ell}. \qedhere
    \]}
\end{proof}

In the following, we will apply \Cref{lem::degree_shift} two times.
First, to lift the representation of $f$ from $\preo(\eta^2 - x_1^2, \ldots, \eta^2 - x_n^2)$ to $\preo(n - \metric{\x}{2q}) = \quadr(n - \metric{\x}{2q})$. There, we apply the second part of \Cref{lem::degree_shift} with $g_1 = n - \metric{\x}{2q}$ and $h_i = \eta^2 - x_i^2$ for $i \in [n]$, see \Cref{lem::basic2}.

Second, to lift the representation of $f$ from $\quadr(n - \metric{\x}{2q})$ to $\quadr(1-x_1^2, \dots, 1-x_n^2)$. Here, we use the first part of \Cref{lem::degree_shift} with $\vb h = h_1 = n - \metric{\x}{2q}$ and $g_i = 1-x_i^2$ for $i \in [n]$, see the proof of \Cref{thm::from_preordering_to_module}. 

For these applications, we will need to determine two numbers (degree shifts) $\ell_1, \ell_2 \in \N$ depending on $\eta > 0$ such that:
\begin{align}
\eta^2 - x_1^2, \ldots, \eta^2 - x_n^2 &\in \preo(n - \metric{\x}{2q})_{2+\ell_1}, \\
n - \metric{\x}{2q} &\in \quadr(1-x_1^2, \dots, 1-x_n^2)_{2q + \ell_2}.
\end{align}%

To determine these degree shifts, we start by investigating the univariate case.

\begin{lemma}\label{lem::basic}
    For all $q \in \N$, the {(optimal)} degree shift of $1-x^2$ with respect to $1-x^{2q}$ is equal to $2q-2$, i.e.
    $1 - x^2 \in \quadr(1-x^{2q})_{2q}$. 
\end{lemma}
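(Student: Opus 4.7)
The plan is to exhibit an explicit representation witnessing $1-x^2 \in \quadr(1-x^{2q})_{2q}$, and then show that no representation of smaller degree can exist.

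\textbf{Existence.} First I would observe that, at degree $2q$, the weight $\sigma_1$ on $(1-x^{2q})$ is forced to have degree $0$, i.e.\ it must be a nonneg constant $c \ge 0$. So I would look for a representation of the form
\[
    1 - x^2 = \sigma_0(x) + c \cdot (1 - x^{2q}),
\]
which requires $\sigma_0(x) = (1-c) + c\, x^{2q} - x^2$ to be a sum of squares of degree at most $2q$. The key step is to choose $c = 1/q$, motivated by the AM-GM inequality applied to the $q$ nonneg reals $\underbrace{1,\ldots,1}_{q-1 \text{ times}}, x^{2q}$:
\[
    \frac{(q-1)\cdot 1 + x^{2q}}{q} \;\ge\; \bigl(1^{q-1}\cdot x^{2q}\bigr)^{1/q} \;=\; x^2.
\]
This immediately yields $\sigma_0(x) \ge 0$ for every $x \in \R$. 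Since every univariate polynomial nonneg on $\R$ is a sum of (at most two) squares, $\sigma_0 \in \Sigma[x]$. Both $\sigma_0$ and $c(1-x^{2q})$ have degree $2q$, giving $1-x^2 \in \quadr(1-x^{2q})_{2q}$, i.e.\ a degree shift of $2q-2$.

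\textbf{Optimality.} For the matching lower bound, suppose toward contradiction that $1-x^2 = \sigma_0 + \sigma_1\,(1-x^{2q}) \in \quadr(1-x^{2q})_r$ with $r < 2q$. Then the truncation constraint $\deg(\sigma_1(1-x^{2q})) \le r$ would force $\deg \sigma_1 \le r - 2q < 0$, so $\sigma_1 = 0$. This reduces the identity to $1 - x^2 = \sigma_0 \in \Sigma[x]$, which is absurd since $1-x^2$ is negative for $|x|>1$. Hence the minimum representation degree is exactly $2q$, so the optimal degree shift equals $2q-2$.

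The argument is entirely elementary; the only real insight is spotting $c = 1/q$ via AM-GM, and I do not anticipate any substantial obstacle. I would expect the multivariate analogue (lifting this to a degree shift for $n - \|\x\|_{2q}^{2q}$ with respect to $1-x_1^2,\ldots,1-x_n^2$) to require more care, since there one loses the free use of the ``every nonneg univariate polynomial is SoS'' fact and must produce an explicit SoS decomposition by hand.
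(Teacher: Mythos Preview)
Your proof is correct and essentially identical to the paper's: with $c=1/q$ your $\sigma_0(x) = (1-c) + c\,x^{2q} - x^2$ is exactly the polynomial $\frac{(q-1) - qx^2 + x^{2q}}{q}$ that the paper writes down directly, and both arguments then invoke the fact that nonnegative univariate polynomials are sums of squares. Your optimality argument is also the same as the paper's (just spelled out in more detail), and your AM--GM motivation for the choice $c=1/q$ is a nice addition that the paper does not make explicit.
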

\begin{proof}
    For all $1 \le q \in \N$, consider the identity:
    \begin{equation}\label{eq::direct}
        1 - x^2 = \frac{(q-1) - qx^2 + x^{2q}}{q} + \frac{1-x^{2q}}{q}
    \end{equation}
    Notice that $(q-1) - qx^2 + x^{2q} \in \nnon(\R)$,
    since the polynomial has minimum equal to $0$ attained at $\pm 1$. Moreover, sums of squares and nonnegative polynomials coincide in one variable, and thus $(q-1) - qx^2 + x^{2q} \in \Sigma[x]_{2q}$. Therefore, \cref{eq::direct} implies that $1 - x^2 \in \Sigma[x]_{2q} + \R_{\ge 0} (1-x^{2q}) =\quadr(1-x^{2q})_{2q}$. {Since $1-x^2$ is not globally nonnegative on $\R$, no representation of smaller degree is possible. This concludes the proof}.
\end{proof}
We refer to \Cref{app::degree_shift}, and in particular to \cref{eq::sos}, for a more detailed discussion of \Cref{lem::basic} and \cref{eq::direct}.

We turn our attention to the multivariate case. We investigate the degree shift of $\eta^2 - x_1^2, \dots , \eta^2 - x_n^2$, i.e. the polynomials defining a scaled hypercube containing $\cube{n}$ for $\eta \ge 1$, with respect to $n - \metric{\x}{2q}$, the polynomial defining the $L^{2q}$ unit ball. Recall that the parameter $\eta$ will be chosen in such a way $f \ge \funcmin > 0$ on $\cube{n}$ implies $f \ge \frac{1}{2}\funcmin > 0$ on $[-\eta, \eta]^n = \cS(\eta^2 - x_1^2, \dots, \eta^2 - x_n^2)$,
see \Cref{lem::choice_of_epsilon}.

We prove that the degree shift of $\eta^2 - x_1^2, \dots , \eta^2 - x_n^2$ w.r.t. $n - \metric{\x}{2q}$ coincides with the one of \Cref{lem::basic}.

\begin{lemma} \label{lem::basic2}
    Let $\eta = \sqrt[2q]{n}$. Then the degree shift of {$(\eta^2 - x_1^2, \dots , \eta^2 - x_n^2)$}
    with respect to $n - \metric{\x}{2q}$ is $2q-2$. In other words, $\eta^2 - x_i^2 \in \quadr(n - \metric{\x}{2q})_{2q}$ for all $i \in [n]$.
\end{lemma}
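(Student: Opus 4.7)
The plan is to reduce to the univariate identity of \Cref{lem::basic} by a change of variables, then exploit the symmetry of the $L^{2q}$-ball to absorb the other variables as a trivial sum of squares. By permutation symmetry it suffices to prove $\eta^2 - x_1^2 \in \quadr(n - \metric{\x}{2q})_{2q}$.

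First I would recall the univariate identity from \Cref{lem::basic}:
\[
    q(1-y^2) \;=\; \sigma(y) \;+\; (1 - y^{2q}),
    \qquad \sigma(y) := (q-1) - qy^2 + y^{2q},
\]
where $\sigma \in \Sigma[y]_{2q}$. Substituting $y = x_1/\eta$ and multiplying through by $\eta^2$ yields
\[
    q(\eta^2 - x_1^2) \;=\; \eta^2 \sigma(x_1/\eta) \;+\; \eta^{-(2q-2)} (\eta^{2q} - x_1^{2q}).
\]
Since $\eta = \sqrt[2q]{n}$, i.e.\ $\eta^{2q} = n$, the bracket on the right becomes $n - x_1^{2q}$, and I would observe that $\eta^2 \sigma(x_1/\eta)$ is still a sum of squares in $x_1$ of degree $2q$ (a positive scaling does not affect SoS-ness, and the substitution is linear).

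Next I would split off the $L^{2q}$-generator from the remaining monomials:
\[
    n - x_1^{2q} \;=\; \bigl(n - \metric{\x}{2q}\bigr) \;+\; \sum_{j=2}^{n} x_j^{2q}
    \;=\; \bigl(n - \metric{\x}{2q}\bigr) \;+\; \sum_{j=2}^{n} (x_j^{q})^2,
\]
so that $\sum_{j \neq 1} x_j^{2q}$ is manifestly a sum of squares of degree $2q$. Combining everything, dividing by $q$, and regrouping gives an explicit decomposition
\[
    \eta^2 - x_1^2 \;=\; \tfrac{1}{q}\Bigl(\eta^2 \sigma(x_1/\eta) + \eta^{-(2q-2)}\!\sum_{j=2}^{n} (x_j^{q})^2\Bigr) \;+\; \tfrac{1}{q\eta^{2q-2}} \bigl(n - \metric{\x}{2q}\bigr),
\]
in which the first summand is SoS of degree $2q$ and the second is a nonnegative scalar multiple of the generator (whose product with the generator has degree $2q$). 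This places $\eta^2 - x_1^2$ in $\quadr(n - \metric{\x}{2q})_{2q}$, so the degree shift is at most $2q - 2$.

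I do not anticipate a genuine obstacle here: the argument is an essentially mechanical substitution once one notices the right choice $\eta = n^{1/(2q)}$ that converts $\eta^{2q} - x_1^{2q}$ into $n - x_1^{2q}$, together with the elementary observation that $\sum_{j\neq 1} x_j^{2q}$ is SoS. If one additionally wanted the matching lower bound (i.e.\ that $2q-2$ is optimal), it would follow from the fact that $\eta^2 - x_i^2$ is not globally nonnegative on $\R^n$ (so no pure SoS certificate exists), which forces a nontrivial use of the generator; but the stated form of the lemma only requires the upper bound.
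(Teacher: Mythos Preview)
Your proof is correct and follows essentially the same approach as the paper: both use the univariate identity of \Cref{lem::basic}, the observation that $\sum_{j\neq i} x_j^{2q}$ is a sum of squares, and the scaling $x_i \mapsto x_i/\eta$ with $\eta^{2q}=n$. The only cosmetic difference is the order of operations (the paper first passes to $\quadr(1-\metric{\x}{2q})$ and then rescales, whereas you rescale the univariate identity first and then split off the generator), and your explicit final decomposition in fact coincides with the one the paper records in its appendix.
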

\begin{proof}
    First, notice that
    \[
        1-x_i^{2q} = \sum_{j\neq i} x_j^{2q} + 1 - \metric{\x}{2q} \in \quadr(1 - \metric{\x}{2q})_{2q}
    \]
    and thus from \Cref{lem::basic} we deduce that 
    \[ 
    1 - x_i^2 \in \quadr(1 - \metric{\x}{2q})_{2q}
    \]
    for all $i$.

    Now let $\eta = \sqrt[2q]{n}$. If we substitute $x_i \mapsto x_i/\eta$ in $1-x_i^2$
    we obtain $\frac{\eta^2 - x_i^2}{\eta^2}$, while if we substitute in $1 -\metric{\x}{2q}$
    we obtain $\frac{n - \metric{\x}{2q}}{n}$.
    Making these substitutions in the expression $1 - x_i^2 \in \quadr(1 - \metric{\x}{2q})_{2^m}$
    we therefore see that
    \[\eta^2 - x_i^2 \in \quadr(n - \metric{\x}{2q})_{2q}\]
    concluding the proof.
\end{proof}

We refer to \Cref{app::degree_shift} and in particular to \cref{eq::deformation} for a more detailed discussion of \Cref{lem::basic2}. {The choice of $q$ (or equivalently, the choice of $\eta$), will be a key step for the proof of \Cref{THM:main2}. This choice will be governed by \Cref{lem::choice_of_epsilon} (where we write $\eta = 1+\epsilon$).}

We are now ready to show one of the main results of this section.

\begin{theorem}\label{thm::from_preordering_to_module}
    Let $\eta = \sqrt[2q]{n}$. Then for all $k \in \N$:
    \[
        \preo(\eta^2 - x_1^2, \dots , \eta^2 - x_n^2)_{k} \subseteq\quadr(1 - x_1^2, \dots , 1 - x_n^2)_{k + n (2q-2)}
    \]
\end{theorem}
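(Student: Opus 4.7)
The strategy is to chain the two parts of \Cref{lem::degree_shift} through the intermediate single-polynomial generator $n - \metric{\x}{2q}$, realising the tower of inclusions
\[
\preo(\eta^2 - x_1^2, \ldots, \eta^2 - x_n^2) \;\subseteq\; \quadr(n - \metric{\x}{2q}) \;\subseteq\; \quadr(1-x_1^2, \ldots, 1-x_n^2),
\]
with tight control of the degrees at each step. The first inclusion will consume the entire budget $n(2q-2)$, while the second will cost nothing.

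First, I would invoke \Cref{lem::basic2}, which states that $\eta^2 - x_i^2 \in \quadr(n - \metric{\x}{2q})_{2q}$ for every $i \in [n]$. Since $\deg(\eta^2 - x_i^2) = 2$, this precisely says that the tuple $\vb h = (\eta^2 - x_1^2, \ldots, \eta^2 - x_n^2)$ has degree shift $\ell_1 = 2q - 2$ with respect to the single-polynomial tuple $(n - \metric{\x}{2q})$. Applying part (ii) of \Cref{lem::degree_shift} with $s = n$ and $g_1 = n - \metric{\x}{2q}$ then immediately yields
\[
\preo(\eta^2 - x_1^2, \ldots, \eta^2 - x_n^2)_k \;\subseteq\; \quadr(n - \metric{\x}{2q})_{k + n(2q-2)}.
\]

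Next, I need a \emph{zero-shift} representation of $n - \metric{\x}{2q}$ in $\quadr(1-x_1^2, \ldots, 1-x_n^2)$. For this I rely on the telescoping factorization
\[
1 - x_i^{2q} \;=\; (1 - x_i^2)\bigl(1 + x_i^2 + x_i^4 + \cdots + x_i^{2q-2}\bigr),
\]
whose second factor is a sum of squares of degree $2q-2$. Summing over $i$ expresses $n - \metric{\x}{2q} = \sum_{i=1}^n (1-x_i^2)\,\sigma_i(x_i)$ with each summand of degree exactly $2q$, which places $n - \metric{\x}{2q}$ in $\quadr(1-x_1^2, \ldots, 1-x_n^2)_{2q}$. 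Since $\deg(n - \metric{\x}{2q}) = 2q$, this corresponds to degree shift $\ell_2 = 0$. Part (i) of \Cref{lem::degree_shift} (applied to $\vb h = (n - \metric{\x}{2q})$ and $\vb g = (1-x_1^2, \ldots, 1-x_n^2)$) therefore gives $\quadr(n - \metric{\x}{2q})_d \subseteq \quadr(1-x_1^2, \ldots, 1-x_n^2)_d$ for all $d \in \N$. Choosing $d = k + n(2q-2)$ and composing with the first inclusion delivers the claimed containment.

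There is no real obstacle: the substance lies entirely in the factorization of $1 - x_i^{2q}$, which is elementary. The only pitfall worth flagging is verifying that the exponents line up so that the second inclusion really costs nothing --- which hinges on $\deg\bigl(\sigma_i(x_i)(1-x_i^2)\bigr) = (2q-2) + 2 = 2q$ matching $\deg(n - \metric{\x}{2q})$ exactly. Spelling out the factorization makes this transparent and ensures the final bound $k + n(2q-2)$, rather than anything looser.
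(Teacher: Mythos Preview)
Your proof is correct and follows essentially the same approach as the paper: both chain \Cref{lem::degree_shift}(ii) and (i) through the intermediate generator $n - \metric{\x}{2q}$, using \Cref{lem::basic2} for the first step (shift $\ell_1 = 2q-2$) and the factorization $1 - x_i^{2q} = (1-x_i^2)(1 + x_i^2 + \cdots + x_i^{2q-2})$ for the second (shift $\ell_2 = 0$).
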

\begin{proof}
    We start moving from $\preo(\eta^2 - x_1^2, \dots , \eta^2 - x_i^2)$ 
    to $\quadr(n - \metric{\x}{2q})$. From \Cref{lem::basic2}, the degree shift {of $(\eta^2 - x_1^2, \dots , \eta^2 - x_i^2)$ with respect to $n - \metric{\x}{2q}$} is $\ell_1 = 2q-2$, and from \Cref{lem::degree_shift}(ii) we have
    \[
    \preo(\eta^2 - x_1^2, \dots , \eta^2 - x_i^2)_{k} \subseteq  \quadr(n - \metric{\x}{2q})_{k+ n \ell_1} 
    % = \preo(n - \metric{\x}{2q})_{k+ n \ell_1}.
    \]
    
    We now move from $\quadr(n - \metric{\x}{2q})$ to $\quadr(1 - x_1^2, \dots , 1 - x_n^2)$. Notice that, since $1 - x_i^{2q} = (1-x_i^2)(1+x_i^2 + \dots + x_i^{2q-2})$, we have:
    \[
        n - \metric{\x}{2q} = \sum_{i=1}^n 1 - x_i^{2q} \in \quadr(1 - x_1^2, \dots , 1 - x_n^2)_{2q}
    \]
    and thus the degree shift of $n - \metric{\x}{2q}$ with respect to $1 - x_1^2, \dots , 1 - x_n^2$ is equal to ${\ell_2 = 0}$. From \Cref{lem::degree_shift}(i) we then deduce that 
    \[
    \quadr(n - \metric{\x}{2q})_{k+ n \ell_1} \subseteq \quadr({1 - x_1^2}, \dots, {1 - x_n^2})_{k+ n \ell_1}.
    \]
    We therefore have the chain of inclusions:
    \begin{align*}
        \preo(\eta^2 - x_1^2, \dots , \eta^2 - x_i^2)_{k} 
        % &\subseteq \quadr(n - \metric{\x}{2q})_{k+ n \ell_1} \\ 
        \subseteq \quadr(n - \metric{\x}{2q})_{k+ n \ell_1} \subseteq\quadr(1 - x_1^2, \dots , 1 - x_n^2)_{k+ n \ell_1},
    \end{align*}
    where $l_1 = 2q-2$, concluding the proof.
\end{proof}

\Cref{thm::from_preordering_to_module} allows to shift the representation of a polynomial in the preordering of $[-\eta, \eta]^n$ to a representation in the quadratic module of $\cube{n}$. We refer to \Cref{app::degree_shift} and in particular \cref{eq::explicit_preo_module} for explicit expressions leading to this inclusion.

We now write $\eta = 1 + \epsilon$ and bound $\epsilon$ in such a way $f \ge \funcmin > 0$ on $\cube{n}$ implies $f \ge \frac{1}{2}\funcmin > 0$ on $[-1-\epsilon, 1+\epsilon]^n = [-\eta, \eta]^n$.

\begin{lemma}\label{lem::choice_of_epsilon}
    Let $f \ge \funcmin > 0$ on $\cube{n}$ be a polynomial of degree $d$.
    Let $\Chebyconst \geq 1$ be the absolute constant of \Cref{LEM:chebybound}. Then $f \ge \frac{1}{2}\funcmin$ on $[-1-\epsilon, 1+\epsilon]^n$
    whenever 
    \begin{equation} \label{EQ:epsilon}
    \epsilon \le \frac{f_{\min}}{2 \Chebyconst \cdot d^2 \cdot \funcmax}.
    \end{equation}
\end{lemma}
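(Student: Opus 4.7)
The plan is to use the Markov Brothers' inequality (\Cref{THM:genmarkov}) to control how much $f$ can deviate as $\x$ moves from the hypercube $\cube{n}$ to the slightly enlarged box $[-\eta,\eta]^n$, where $\eta = 1+\epsilon$. Given $\x \in [-\eta,\eta]^n$, I would let $\x' \in \cube{n}$ be its coordinatewise projection onto $\cube{n}$, so that $\|\x - \x'\|_\infty \le \epsilon$; setting $\alpha := \|\x - \x'\|_\infty$ and $\y := (\x - \x')/\alpha$ (the case $\alpha = 0$ being trivial), so that $\|\y\|_\infty = 1$, I would then work with the univariate polynomial $h(t) := f(\x' + t\y)$ of degree at most $d$, which satisfies $h(0) = f(\x')$ and $h(\alpha) = f(\x)$.

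The key step is to bound $|h'(s)|$ uniformly for $s \in [0,\alpha]$. For such $s$, the point $\x' + s\y$ satisfies $\|\x' + s\y\|_\infty \le 1 + s \le \eta$, so applying \Cref{THM:genmarkov} with $k = 1$ (for the norm $\|\cdot\|_\infty$, whose unit ball is $\cube{n}$, so that $\|f\|_\infty = \funcmax$) yields the pointwise estimate $|h'(s)| \le \funcmax \cdot \Cheby_d^{(1)}(\max\{1,\|\x' + s\y\|_\infty\})$. Combining this with the bound $|\Cheby_d^{(1)}(w)| \le d^2\, \Cheby_d(w)$ from \Cref{LEM:Markov} and the monotonicity of $\Cheby_d$ on $[1,\infty)$ gives the uniform bound $|h'(s)| \le \funcmax\, d^2\, \Cheby_d(\eta)$. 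The fundamental theorem of calculus then produces
\[
|f(\x) - f(\x')| \;\le\; \int_0^\alpha |h'(s)|\, ds \;\le\; \epsilon\, \funcmax\, d^2\, \Cheby_d(1+\epsilon).
\]

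To close the argument, I would set $\delta := \epsilon/(1+\epsilon)$, so that $1+\epsilon = 1/(1-\delta)$. Under the hypothesis $\epsilon \le \funcmin/(2\Chebyconst d^2 \funcmax)$, and using $\Chebyconst \ge 1$ and $\funcmin \le \funcmax$, one obtains $\delta < \epsilon \le 1/(2d^2) \le 1/d^2$; so \Cref{LEM:chebybound} applies and gives $\Cheby_d(1+\epsilon) \le \Chebyconst$. Plugging this into the displayed bound yields $|f(\x) - f(\x')| \le \epsilon\, \funcmax\, d^2\, \Chebyconst \le \funcmin/2$, and since $f(\x') \ge \funcmin$ by assumption, we conclude $f(\x) \ge \funcmin/2$.

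The delicate point is the choice of object on which to apply Markov's inequality. A direct attempt to bound $\max_{[-\eta,\eta]^n} |\funcmin - f|$ via \Cref{THM:genmarkov} at $k = 0$ would demand $(\funcmax - \funcmin)\,\Cheby_d(\eta) \le \funcmin/2$, which is useful only when $\funcmax/\funcmin$ is already close to $1$, since the factor $\Cheby_d(\eta) \ge 1$ cannot compensate for a potentially large gap $\funcmax - \funcmin$. Applying Markov instead to the first derivative along a carefully chosen line segment sidesteps this pitfall: the factor $\epsilon$ enters through the integration and gets paired with $d^2\funcmax$ rather than with the gap $\funcmax - \funcmin$, which is precisely the dependence encoded in the hypothesis.
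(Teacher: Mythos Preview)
Your argument is correct and follows essentially the same route as the paper: project to a nearest point of $\cube{n}$, restrict $f$ to the connecting segment, bound the derivative along that segment via \Cref{THM:genmarkov} with $k=1$ combined with \Cref{LEM:Markov}, and then control $\Cheby_d(1+\epsilon)$ using \Cref{LEM:chebybound}. The only cosmetic differences are that the paper works with the minimizer $\z$ of $f$ on the enlarged box rather than an arbitrary point, and invokes \Cref{LEM:chebybound} via the cruder inequality $1+\epsilon \le \tfrac{1}{1-\epsilon}$ (taking $\delta=\epsilon$) instead of your substitution $\delta=\epsilon/(1+\epsilon)$.
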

\begin{proof}
    {Assume that $\epsilon$ is as in \cref{EQ:epsilon} and let $\z \in [-1-\epsilon, 1+\epsilon]^n$ be a minimizer of $f$ on $[-1-\epsilon, 1+\epsilon]^n$, i.e. $f(\z) = f_{\min, [-1-\epsilon, 1+\epsilon]^n}$. Clearly,  $f_{\min, [-1-\epsilon, 1+\epsilon]^n} \leq \funcmin$. If $\z \in \cube{n}$, then $f(\z) = \funcmin \geq \frac{1}{2}\funcmin$ and there is nothing to prove. So assume $\z \notin \cube{n}$ and let $\widehat\z \in \cube{n}$ be a point in $\cube{n}$ with $0 < \|\z-\widehat\z\|_\infty \leq \epsilon$.
    Consider the univariate polynomial $F$ given by:}    
    \[
        F(u) := f(\widehat\z + u \cdot \vb v), \quad \text{ where } \vb v := \frac{(\z-\widehat\z)}{\|\z-\widehat\z\|_\infty}.
    \]
    Note that $F(0) = f(\widehat\z)$ and $F(\|\z-\widehat\z\|_\infty) = f(\z)$. We now bound the derivative $F'(u)$ of $F$ for all $0 \leq u \leq \|\z-\widehat\z\|_\infty$, so that we can obtain a bound on the difference $|f(\z) - f(\widehat\z)|$.   
    First, notice that
    \begin{equation}\label{eq::F_prime}
        F'(u) = \dv{t} f\big((\widehat\z + u \cdot \vb v ) + t \cdot \vb v \big)|_{t=0}.    
    \end{equation}
    As $\| \vb v\|_{\infty} \leq 1$, we can apply~\cref{EQ:markovk>0} to the polynomial~$f$, with $\x = \widehat \z + u \cdot \vb v$ and $\y = \vb v$, to get
    \[
        |F'(u)| = \left|\dv{t} f\big((\widehat\z + u \cdot \vb v ) + t \cdot \vb v \big)|_{t=0}\right| \leq T'_d(\norm{\widehat \z + u \cdot \vb v}_\infty ) \cdot \max_{\|\x\|_{\infty}
        \leq 1 } |f(\x)|.
    \]
    Notice that, since $0 \le u \le \|\z-\widehat\z\|_\infty \leq \epsilon$, we have \[ \norm{\widehat \z + u \cdot \vb v}_\infty \leq 1 + u \leq 1 + \epsilon. \] Now, using \Cref{LEM:Markov} and monotonicity of $T_d$ we get
    \[
        T'_d(\norm{\widehat \z + u \cdot \vb v}_\infty ) \leq d^2 \cdot T_d(\norm{\widehat \z + u \cdot \vb v}_\infty ) \leq d^2 \cdot T_d(1+u) \leq d^2 \cdot T_d(1+\epsilon).
    \]
    Finally, noting that $\max_{\|\x\|_{\infty} \leq 1 } |f(\x)| = \funcmax$, we may conclude that:
    \begin{equation} \label{EQ:Fprimesmall}
        |F'(u)| \leq d^2 \cdot T_d(1+\epsilon) \cdot \funcmax \quad (0 \leq u \leq \|\z-\widehat\z\|_\infty).
    \end{equation}
    Assuming \cref{EQ:epsilon}, we have $\epsilon \leq 1/d^2$ ({as $\funcmin/\funcmax \leq 1$}). Therefore, $\Cheby_d(1+\epsilon) \leq \Cheby_d(\frac{1}{1 - \epsilon}) \leq  \Chebyconst$, where $\Chebyconst \ge 1$ is the absolute constant of \Cref{LEM:chebybound}. Using \cref{EQ:Fprimesmall} and the fact that $\|\z-\widehat\z\|_\infty \leq \epsilon$, we thus have:
    \begin{align*} 
    |f(\z) - f(\widehat\z)| &= |F(0) - F(\|\z-\widehat\z\|_\infty)| \\
    &\leq \|\z-\widehat\z\|_\infty \cdot \max_{0 \leq u \leq \|\z-\widehat\z\|_\infty} |F'(u)| \\
    &\leq \epsilon \cdot d^2 \cdot T_d(1+\epsilon) \cdot \funcmax  \leq  \epsilon  \cdot  d^2 \cdot \Chebyconst \cdot  \funcmax.
    \end{align*}
    In conclusion, if we choose $\epsilon$ as in~\cref{EQ:epsilon}, we have:
    \[
    f_{\min, [-1-\epsilon, 1+\epsilon]^n} = f(\z) \geq f(\widehat \z) - \Chebyconst \cdot \epsilon \cdot d^2 \cdot \funcmax \geq \funcmin - \frac{1}{2}\funcmin = \frac{1}{2}\funcmin. \qedhere
    \]
\end{proof}

We are ready to prove our main result.

\putinarbox*
\begin{proof}[Proof of \Cref{THM:main} and \Cref{THM:main2}]

    Let $ 0 < \epsilon = \frac{f_{\min}}{2 \Chebyconst \cdot d^2 \cdot \funcmax}$ be as in~\cref{EQ:epsilon}, and let $q \in \N$ be the smallest integer such that:
    \begin{equation}
    \label{eq::linear_dependence}
        2q \ge \frac{2\log {n}}{\epsilon} = 4 \Chebyconst \cdot (\log n) \cdot d^2 \cdot \frac{\funcmax}{\funcmin}.
    \end{equation}
    Then, as $\epsilon \leq 1$, we have $\log (1 + \epsilon) \geq \epsilon - \frac{1}{2}\epsilon^2 \geq \frac{1}{2} \epsilon$,
    and thus
    \[
        \frac{2q \cdot \log(1+\epsilon)}{\log n} \geq 1,
    \]
    or in other words, we have $\sqrt[2q]{n} \le 1+\epsilon$. {Therefore, if we set $\eta = \sqrt[2q]{n}$, we have $\eta \leq 1 + \epsilon$, and
    we can deduce from \Cref{lem::choice_of_epsilon} that $f \ge \frac{1}{2}\funcmin$ on $[-\eta, \eta]^n$.}
    From \Cref{cor::scled_schmudgen}, we have a representation $f \in \preo(\eta^2-x_1^2, \ldots, \eta^2-x_n^2)_{(\ell+1)n}$ if $\ell$ is any integer such that
    \[
        \ell \ge \max \left\{ \pi d \sqrt{2n}, \ \left(\frac{f_{\max, [-\eta, \eta]^n}}{f_{\min, [-\eta, \eta]^n}} \cdot C(n, d)\right)^{1/2} \right\}
    \]
    We want to express the above bound using $\frac{\funcmax}{\funcmin}$ instead of $\frac{f_{\max, [-\eta, \eta]^n}}{f_{\min, [-\eta, \eta]^n}}$. For this, recall first that $f_{\min, [-\eta, \eta]^n} \ge \frac{1}{2}\funcmin$ by construction. Second, since $\epsilon \leq 1/d^2$, we have $\Cheby_d(\eta) \leq \Cheby_d(1+\epsilon) \leq \Cheby_d(\frac{1}{1 - \epsilon}) \leq \Chebyconst$ by~\Cref{LEM:chebybound} and we can use~\cref{EQ:markovk0} to get: 
    \[
        f_{\max, [-\eta, \eta]^n} \leq \Cheby_d(\eta) \cdot \funcmax \leq \Chebyconst \cdot \funcmax.
    \]
    Therefore, we have:
        \begin{align}
    \left(\frac{f_{\max, [-\eta, \eta]^n}}{f_{\min, [-\eta, \eta]^n}} \cdot C(n, d)\right)^{1/2} \leq \left(2 \Chebyconst \cdot \frac{\funcmax}{\funcmin} \cdot C(n, d)\right)^{1/2},
    \end{align}
    and we can thus choose $\ell$ as the smallest integer such that:
    \[
        \ell \ge \max \left\{ \pi d \sqrt{2n}, \ \left(2 \Chebyconst \cdot \frac{\funcmax}{\funcmin} \cdot C(n, d)\right)^{1/2} \right\}.
    \]    
    To conclude the proof, we apply \Cref{thm::from_preordering_to_module} and deduce that:
    \begin{align*}
        f \in \preo(\eta^2-x_1^2, \ldots, \eta^2-x_n^2)_{(\ell+1)n}
        & \subseteq\quadr(1-x_1^2, \ldots, 1-x_n^2)_{n (\ell + 1) + n(2q-2)} \\
        & = \quadr(1-x_1^2, \ldots, 1-x_n^2)_{n (2q + \ell - 1)}.
    \end{align*}
    Since $q$ is the smallest integer satisfying~\cref{eq::linear_dependence}, we have $f \in \quadr(1-x_1^2, \ldots, 1-x_n^2)_{rn}$ whenever
    \[
    r \ge 4 \Chebyconst \cdot (\log n) \cdot d^2 \cdot \frac{\funcmax}{\funcmin} + \ell \ge 2q+\ell-1. \qedhere
    \]
    \end{proof}

\section{Proof of the lower degree bound}
\label{SEC:negative}

In this section, we prove our lower degree bound, \Cref{THM:negative}. We consider the bivariate polynomial:
\[
    f(x, y) = (1-x^2)(1-y^2).
\]
Clearly, $f$ is nonnegative on $\cube{2}$ and $f \in \preo(\cube{2})_4$. On the other hand, $f \not \in \quadr(\cube{2})$.
This is well known, but we give an analytical argument for this fact as a warmup to the proof of \Cref{PROP:negative}.
\begin{proposition}\label{prop::basic}
We have $f(x,y) = (1-x^2)(1-y^2) \not\in \quadr(\cube{2})$.
\end{proposition}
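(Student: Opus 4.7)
The plan is to argue by contradiction: suppose we had a representation
\[
(1-x^2)(1-y^2) = \sigma_0 + \sigma_1(1-x^2) + \sigma_2(1-y^2)
\]
with $\sigma_i \in \Sigma[x,y]$, and deduce $1 = 0$. The guiding idea is that $f$ vanishes on each of the four edges of $\cube{2}$, and a Putinar-type certificate must encode this vanishing in such a rigid way that, after dividing out common factors, one is left with a partition-of-unity-style identity that fails at the corner $(1,1)$.

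The first step will be a boundary vanishing analysis. Setting $y = 1$ kills $f$ together with the last summand, forcing $\sigma_0(x,1) + \sigma_1(x,1)(1-x^2) \equiv 0$ as a polynomial in $x$. Since both terms are pointwise nonnegative on $[-1,1]$, each must be identically zero. Writing $\sigma_0 = \sum_i p_i^2$, the identity $\sum_i p_i(x,1)^2 \equiv 0$ forces $p_i(x,1) \equiv 0$ for every $i$, so $(y-1) \mid p_i$ in $\R[x,y]$ and hence $(y-1)^2 \mid \sigma_0$; the same reasoning yields $(y-1)^2 \mid \sigma_1$. Iterating with $y = -1$, $x = 1$, $x = -1$ gives
\[
(1-x^2)^2(1-y^2)^2 \mid \sigma_0, \qquad (1-y^2)^2 \mid \sigma_1, \qquad (1-x^2)^2 \mid \sigma_2.
\]

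The second step will be cancellation. Writing $\sigma_0 = (1-x^2)^2(1-y^2)^2 h_0$, $\sigma_1 = (1-y^2)^2 h_1$, $\sigma_2 = (1-x^2)^2 h_2$ for some $h_i \in \R[x,y]$, the supposed representation factors as
\[
(1-x^2)(1-y^2) = (1-x^2)(1-y^2)\bigl[(1-x^2)(1-y^2)h_0 + (1-y^2)h_1 + (1-x^2)h_2\bigr].
\]
Cancelling the nonzero factor in the integral domain $\R[x,y]$ gives $1 = (1-x^2)(1-y^2)h_0 + (1-y^2)h_1 + (1-x^2)h_2$, and evaluating at $(x,y) = (1,1)$ produces the contradiction $1 = 0$. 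The only delicate point I expect is the passage from $\sigma_i(x,1) \equiv 0$ to $(y-1)^2 \mid \sigma_i$, which crucially uses that $\sigma_i$ is a sum of \emph{real} squares: a sum of nonnegative polynomials vanishes identically only if each summand does. The rest is routine polynomial manipulation.
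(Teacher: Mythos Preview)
Your proof is correct, but it takes a different route from the paper's. The paper argues locally at the single corner $(1,1)$: from $f(1,1)=0$ it first shows $\sigma_i(1,1)=0$ for $i=0,1,2$ (using a short continuity argument for $i=1,2$), and then compares second derivatives along the diagonal $t\mapsto(1+t,1-t)$. Since each $\sigma_i$ is globally nonnegative with a zero at $(1,1)$, the directional second derivative of the right-hand side at $t=0$ is nonnegative, whereas a direct computation gives $\frac{d^2}{dt^2}f(1+t,1-t)\big|_{t=0}=-8<0$, a contradiction.

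Your argument is a clean global factorization: you extract the full divisibility $(1-x^2)^2(1-y^2)^2\mid\sigma_0$, $(1-y^2)^2\mid\sigma_1$, $(1-x^2)^2\mid\sigma_2$ from vanishing on all four edges, cancel, and evaluate. This is arguably more elementary and more algebraic. The paper's local, second-derivative approach, on the other hand, is chosen deliberately as a warm-up: it is precisely this derivative analysis at $(1,1)$ that is later made quantitative (via the Markov brothers' inequality) to obtain the lower degree bound $r=\Omega(\epsilon^{-1/8})$ for $(1-x^2)(1-y^2)+\epsilon$. Your divisibility argument does not adapt in any obvious way to that perturbed setting, since adding $\epsilon$ destroys the exact edge-vanishing on which your proof rests.
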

\begin{proof}
Suppose that $f \in \quadr(\cube{2})$. Then $f$ can be written as:
\begin{equation} \label{EQ:fdecomp}
    f(x, y) = \sigma_0(x, y) + (1-x^2)\sigma_1(x, y) + (1-y^2)\sigma_2(x, y),
\end{equation}
where the $\sigma_i \in \Sigma[x, y]$ are sums of squares (in particular globally nonnegative). Note that $f(1, 1) = 0$. We can conclude immediately that $\sigma_0(1, 1) = 0$. In fact, we have that $\sigma_i(1, 1) = 0$ for all $i \in \{0,1, 2\}$. Indeed, suppose for instance that $\sigma_1(1, 1) > 0$. Then there exists an $1 \geq \varepsilon > 0$ such that $\sigma_1(\sqrt{1 - \varepsilon},\ 1) > 0$ by continuity. But this leads to the contradiction:
\[
    0 = f(\sqrt{1 - \varepsilon},\ 1) \geq  \epsilon \cdot \sigma_1(\sqrt{1 - \varepsilon},\ 1) > 0.
\]

To finish the argument, note that from the definition of $f$,
\begin{equation} \label{EQ:fsecondderiv}
\dv[2]{t} f(1 + t, 1-t)|_{t=0} < 0.
\end{equation}
As $\sigma_0, \sigma_1, \sigma_2$ are globally nonnegative, and since $\sigma_i(1, 1) = 0$, we have that:
\begin{align*}
    \dv{t} \sigma_i(1 + t, 1-t)|_{t=0} &= 0, \\
    \dv[2]{t} \sigma_i(1 + t, 1-t)|_{t=0} &\geq 0.
\end{align*}
By~\cref{EQ:fdecomp}, this would imply that
$\dv[2]{t} f(1 + t, 1-t)|_{t=0} \geq 0$, contradicting~\eqref{EQ:fsecondderiv}.
\end{proof}

The idea for the proof of \Cref{PROP:negative} (and thus of \Cref{THM:negative})
is to transform the proof above into a quantitative result. This resembles the argument of Stengle~\cite{Stengle:negative}.
\negative*
\begin{proof}
Let $f(x,y) = (1-x^2)(1-y^2)$, and suppose that $f + \epsilon \in \quadr(\mg)_r$, i.e. that we have a decomposition:
\begin{equation}
    \label{EQ:decomp}
    (1-x^2)(1-y^2) + \epsilon = \sigma_0(x,y) + (1-x^2)\sigma_1(x,y) + (1-y^2)\sigma_2(x,y),
\end{equation}
where $\sigma_0, \sigma_1, \sigma_2$ are sums of squares of polynomials of degree $\mathrm{deg}(\sigma_i) \leq r$ (more precisely, we have $\mathrm{deg}(\sigma_i) \leq r-2$ for $i=1,2$, but this will not be important). 
% and $\mathrm{deg}(\sigma_i) \leq r - 2$ for $i=1,2$, respectively. 
We consider the situation locally around the point $(1, 1) \in \cube{2}$. We can deduce the following facts.
\begin{fact} \label{FACT:sigma1small}
We have $\sigma_1(1, 1) \leq 
\frac{1}{2} \epsilon r^2$.
\end{fact}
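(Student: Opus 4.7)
The plan is to reduce to a univariate situation by restricting to the line $y = 1$, and then use the Markov Brothers' inequality (just as in Stengle's argument).

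\medskip\noindent\textbf{Step 1: Restriction to $y=1$.} Setting $y = 1$ in the identity~\cref{EQ:decomp} makes the terms $(1-y^2)\sigma_2(x,y)$ and $(1-x^2)(1-y^2)$ vanish, leaving the univariate identity
\[
\epsilon = \sigma_0(x,1) + (1-x^2)\,\sigma_1(x,1) \qquad (x \in \R).
\]
Since $\sigma_0$ and $\sigma_1$ are sums of squares in $\R[x,y]$, their restrictions $\sigma_0(x,1)$ and $\sigma_1(x,1)$ are sums of squares in $\R[x]$; in particular both are nonnegative on all of $\R$.

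\medskip\noindent\textbf{Step 2: Uniform bound on a polynomial of degree $\le r$.} Define
\[
q(x) := (1-x^2)\,\sigma_1(x,1) = \epsilon - \sigma_0(x,1).
\]
The degree of $q$ is at most $r$ (using that $\deg\sigma_1 \le r-2$). From the two expressions above together with the nonnegativity of $\sigma_0(x,1)$ and $\sigma_1(x,1)$, we obtain the two-sided bound
\[
0 \;\le\; q(x) \;\le\; \epsilon \qquad (x \in [-1,1]).
\]
Hence $\max_{[-1,1]}|q| \le \epsilon$.

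\medskip\noindent\textbf{Step 3: Markov's inequality at the endpoint.} Apply the (standard one-variable) Markov Brothers' inequality (\Cref{LEM:Markov} with $k=1$, or \Cref{THM:genmarkov} with $n=1$) to $q$ on $[-1,1]$ to get
\[
|q'(1)| \;\le\; r^{2}\cdot \max_{[-1,1]}|q| \;\le\; r^{2}\,\epsilon.
\]
On the other hand, differentiating $q(x) = (1-x^2)\,\sigma_1(x,1)$ and evaluating at $x = 1$ (where $1-x^2 = 0$) gives
\[
q'(1) = -2\,\sigma_1(1,1).
\]
Combining the last two displays yields $2\,\sigma_1(1,1) = |q'(1)| \le r^{2}\epsilon$, i.e.\ $\sigma_1(1,1) \le \tfrac{1}{2}\epsilon r^{2}$, as claimed.

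\medskip
There is no real obstacle here: the only delicate point is noticing that the restriction of an SOS to $y=1$ remains SOS (so that the two sign conditions on $q$ give the sharp bound $\max |q| \le \epsilon$, not just $\max |q| \le 2\epsilon$), and recognizing that the ``vanishing factor'' $1-x^2$ at $x=1$ converts the pointwise value $\sigma_1(1,1)$ into a derivative of $q$, which is exactly what Markov controls.
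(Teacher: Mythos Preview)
Your proof is correct and follows essentially the same approach as the paper: restrict the decomposition to $y=1$, bound the univariate polynomial $(1-x^2)\sigma_1(x,1)$ between $0$ and $\epsilon$ on $[-1,1]$, apply Markov's inequality to its derivative, and evaluate at $x=1$. The only difference is cosmetic (the paper calls this polynomial $p$ rather than $q$ and is terser about the two-sided bound).
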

\begin{proof}
Consider the univariate polynomial $p(x) = (1-x^2)\sigma_1(x, 1)$. By~\cref{EQ:decomp}, we have $0 \leq p(x) \leq \epsilon$ for $x \in [-1, 1]$. By \Cref{THM:genmarkov} and \Cref{LEM:Markov}, we find $|p'(x)| \leq \epsilon r^2$ for $x \in [-1, 1]$.
Setting $x=1$, we thus have:
\[
    \epsilon r^2 \geq |p'(1)| = 2 \sigma_1(1, 1). \qedhere
\]
\end{proof}

\begin{fact}
\label{FACT:sigmasmall}
For any $1 > \delta \geq \epsilon$, we have:
\[
    \sigma_1(x, y) \leq 2 \cdot T_r\big(\frac{1}{1-\delta}\big) \quad \text{ for } x^2 \leq \frac{1}{1 - \delta},~y^2 \leq \frac{1}{1 - \delta}.
\]
In particular, 
\begin{equation} \label{EQ:sigma1small}
\max_{x, y \in [-1, 1]} \sigma_1(x,y) \leq 2 \cdot T_r\big(\frac{1}{1-\delta}\big).
\end{equation}
\end{fact}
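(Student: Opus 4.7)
The plan is to quantify the qualitative argument of \Cref{prop::basic}: near the corner $(1,1)$, both sides of \cref{EQ:decomp} vanish, and because the perturbation $\epsilon$ is small, $\sigma_1$ cannot grow large. Concretely, I would first establish a uniform bound on $\sigma_1$ over the slightly shrunken box $\{x^2,\,y^2 \le 1-\delta\}$, and then transport this bound to the enlarged box $\{x^2,\,y^2 \le \tfrac{1}{1-\delta}\}$ via a single application of \Cref{LEM:Markovscaled}.

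For the first step, I would isolate $\sigma_1$ in \cref{EQ:decomp}. Since $\sigma_0(x,y) \ge 0$ everywhere and $(1-y^2)\sigma_2(x,y) \ge 0$ whenever $y^2 \le 1$, I obtain
\[
(1-x^2)\,\sigma_1(x,y) \;\le\; (1-x^2)(1-y^2) + \epsilon.
\]
On the region $x^2 \le 1-\delta$ we have $1-x^2 \ge \delta > 0$, so dividing yields
\[
\sigma_1(x,y) \;\le\; (1-y^2) + \frac{\epsilon}{1-x^2} \;\le\; 1 + \frac{\epsilon}{\delta} \;\le\; 2,
\]
using the hypothesis $\delta \ge \epsilon$ in the last inequality. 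This gives the uniform bound $\sigma_1 \le 2$ on $\{x^2,\,y^2 \le 1-\delta\}$.

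For the second step, I would apply \Cref{LEM:Markovscaled} to the polynomial $\sigma_1$ (of degree at most $r$) with the $L^\infty$-norm, whose unit ball is $\cube{2}$. Since $\sigma_1 \ge 0$, this yields
\[
\max_{x^2,\,y^2 \le \frac{1}{1-\delta}} \sigma_1(x,y) \;\le\; T_r\!\Big(\tfrac{1}{1-\delta}\Big) \cdot \max_{x^2,\,y^2 \le 1-\delta} \sigma_1(x,y) \;\le\; 2\,T_r\!\Big(\tfrac{1}{1-\delta}\Big),
\]
which is the main inequality. The ``in particular'' bound \cref{EQ:sigma1small} then follows at once from $\cube{2} \subseteq \{x^2,\,y^2 \le \tfrac{1}{1-\delta}\}$. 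The only subtle choice is that of the intermediate region: shrinking in both $x$ and $y$ is necessary so that simultaneously $1-x^2 \ge \delta$ (permitting the division and producing a clean $\epsilon/\delta$ term) and $1-y^2 \ge 0$ (so that $(1-y^2)\sigma_2$ may be dropped as a nonnegative term). Once this is arranged, the rest is a routine invocation of the Chebyshev-based approximation tools gathered in \Cref{SEC:prelim}.
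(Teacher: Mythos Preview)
Your proposal is correct and follows essentially the same approach as the paper: bound $\sigma_1 \le 2$ on the shrunken box $\{x^2,y^2 \le 1-\delta\}$ by isolating it in \cref{EQ:decomp} and dividing by $1-x^2 \ge \delta$, then invoke \Cref{LEM:Markovscaled} to transport the bound to the enlarged box. The steps and intermediate inequalities match the paper's proof almost verbatim.
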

\begin{proof}
From \cref{EQ:decomp}, we have:
\[
    (1-x^2)\sigma_1(x,y) \leq (1-x^2)(1-y^2) + \epsilon \quad \text{ for } x, y \in [-1, 1].
\]
As $\delta \ge \epsilon$, we thus get:
\[
    \sigma_1(x,y) \leq (1-y^2) + \frac{\epsilon}{1-x^2} \leq 1 + 1 = 2 \quad \text{ for } x^2 \leq 1 - \delta,~y^2 \leq 1 - \delta.
\]
In other words, we have $\max_{\|(x,y)\|_\infty^2 \leq 1-\delta} |\sigma_1(x,y)| \leq 2$. We may therefore apply \Cref{LEM:Markovscaled} to $\sigma_1$ to obtain the fact.
\end{proof}

\begin{fact} \label{FACT:secondderivative}
Let $g(t) = \sigma_1(1 + t, 1 - t)$. Then for any $1 > \delta \geq \epsilon$, and any $u \in [-\delta, \delta]$, we have:
\[
    \frac{1}{2}|g''(u)| \leq r^4 \cdot T_r\big(\frac{1}{1-\delta}\big)^2.
\]
\end{fact}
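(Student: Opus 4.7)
The plan is to apply the multivariate Markov Brothers' inequality (\Cref{THM:genmarkov}) with $k = 2$ to the polynomial $\sigma_1$, using \Cref{FACT:sigmasmall} to control its supremum norm on an appropriate unit ball. First I would write $g''(u) = \frac{d^2}{dt^2}\sigma_1(\x + t\y)\big|_{t=0}$ with base point $\x = (1+u,\,1-u)$ and direction $\y = (1,-1)$. For $u \in [-\delta, \delta]$ one has $\|\x\|_\infty \le 1 + \delta$ and $\|\y\|_\infty = 1$. Since \Cref{THM:genmarkov} requires $\|\y\| \le 1$ in whichever norm is used, I would work with the rescaled $L^\infty$ norm $\|\z\| := \sqrt{1-\delta}\cdot\|\z\|_\infty$, whose unit ball is exactly $\{|z_i| \le 1/\sqrt{1-\delta}\}$. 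Under this norm $\|\y\| = \sqrt{1-\delta} \le 1$, and \Cref{FACT:sigmasmall} directly supplies $\|\sigma_1\|_\infty \le 2\,T_r(1/(1-\delta))$ on the same ball.

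Second, applying \Cref{THM:genmarkov} with $k = 2$ yields
\[
|g''(u)| \;\le\; 2\,T_r\!\left(\tfrac{1}{1-\delta}\right) \cdot T_r^{(2)}\!\big(\max\{1,\,\|\x\|\}\big).
\]
I would then invoke \Cref{LEM:Markov} with $k=2$ to replace $T_r^{(2)}(y)$ by $r^4\cdot T_r(y)$ pointwise, which reduces the problem to bounding $T_r(\max\{1,\|\x\|\})$. Since $T_r$ is monotone increasing on $[1, \infty)$, it now suffices to show $\|\x\| \le 1/(1-\delta)$.

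The remaining check is elementary: $\|\x\| = (1+|u|)\sqrt{1-\delta} \le (1+\delta)\sqrt{1-\delta}$, and $(1+\delta)\sqrt{1-\delta} \le 1/(1-\delta)$ is equivalent to $(1+\delta)(1-\delta)^{3/2} \le 1$, which holds for every $\delta \in [0,1)$. Combining the three estimates gives $|g''(u)| \le 2 r^4\cdot T_r(1/(1-\delta))^2$, which is the claim after dividing by~$2$. The only delicate point is calibrating the scaling factor in the norm so that simultaneously (a) the direction $\y = (1,-1)$ fits inside the unit ball, forcing the scaling to be at most~$1$, and (b) the base point $\x$ stays inside the region where \Cref{FACT:sigmasmall} provides control; the choice $\sqrt{1-\delta}$ is essentially the unique scaling that achieves both. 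Beyond this, the proof is a mechanical combination of the Chebyshev-polynomial estimates already collected in \Cref{SEC:prelim}.
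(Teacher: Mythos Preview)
Your proof is correct and follows the same outline as the paper's: apply \Cref{THM:genmarkov} with $k=2$ to $\sigma_1$, bound $T_r^{(2)}$ via \Cref{LEM:Markov}, and control the sup norm of $\sigma_1$ using \Cref{FACT:sigmasmall}. The one difference is that your rescaling of the norm is unnecessary: with the standard $L^\infty$ norm one already has $\|(1,-1)\|_\infty = 1$, and the ``In particular'' clause~\cref{EQ:sigma1small} of \Cref{FACT:sigmasmall} directly bounds $\max_{[-1,1]^2}\sigma_1$; the paper then simply uses $\|\x\|_\infty = 1+|u| \le 1+\delta \le \tfrac{1}{1-\delta}$, avoiding the auxiliary inequality $(1+\delta)(1-\delta)^{3/2}\le 1$.
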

\begin{proof}
Assume w.l.o.g. that $u \geq 0$. Note that $\frac{1}{1-\delta} \geq 1 + \delta \geq 1 + u$. Using \cref{EQ:markovk>0}, \cref{EQ:sigma1small}, and~\Cref{LEM:Markov}, we therefore have that:
\begin{align*}
    |g''(u)| &= \dv[2]{t} \big(\sigma_1(1+u+t, ~1-u-t)\big)\bigr|_{t=0}
    \\
    &\leq T_r^{(2)}(1+u) \cdot \max_{x,y \in [-1, 1]} \sigma_1(x, y)
    \\
    &\leq r^4 \cdot T_r\big(\frac{1}{1-\delta}\big) \cdot 2T_r\big(\frac{1}{1-\delta}\big) \qedhere. 
\end{align*}
\end{proof}

\begin{fact} \label{FACT:derivsmall}
Let $g(t) = \sigma_1(1 + t, 1 - t)$. Then for any $1 > \delta \geq \epsilon$, we have:
\[
g'(0) \leq \frac{\epsilon}{2\delta} r^2 + \delta r^4 \cdot T_r\big(\frac{1}{1-\delta}\big)^2.
\]
\end{fact}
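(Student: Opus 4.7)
The plan is to estimate $g'(0)$ by a second order Taylor expansion of $g$ centered at $0$, evaluated at a nearby point at distance $\delta$. Since $\sigma_1$ is a sum of squares, $g$ is globally nonnegative on $\mathbb R$, and this nonnegativity at one additional point will provide the missing information to turn the bounds on $g(0)$ and $g''$ (already obtained in Facts~4.2 and~4.4) into a bound on $g'(0)$.

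Concretely, I would write, by Taylor's theorem with Lagrange remainder,
\[
g(-\delta) \;=\; g(0) \;-\; \delta\, g'(0) \;+\; \tfrac{\delta^2}{2}\, g''(\xi)
\]
for some $\xi \in [-\delta, 0]$, and rearrange to
\[
g'(0) \;=\; \frac{g(0) - g(-\delta)}{\delta} \;+\; \frac{\delta}{2}\, g''(\xi).
\]
Now $g(-\delta) = \sigma_1(1-\delta,\, 1+\delta) \geq 0$ since $\sigma_1 \in \Sigma[x,y]$, so the first term is at most $g(0)/\delta$. By Fact~4.2, $g(0) = \sigma_1(1,1) \leq \tfrac{1}{2}\epsilon r^2$, giving $(g(0)-g(-\delta))/\delta \leq \epsilon r^2 /(2\delta)$. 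For the second term, Fact~4.4 applies since $\xi \in [-\delta,\delta]$, yielding $\tfrac{\delta}{2}|g''(\xi)| \leq \delta r^4 T_r\bigl(\tfrac{1}{1-\delta}\bigr)^2$. Adding these two bounds produces exactly the claimed inequality.

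I do not foresee a genuine obstacle here: all the technical work (Markov-type control of $\sigma_1$ and its derivatives, bound on $\sigma_1(1,1)$) has been carried out in Facts~4.2--4.4, and the only new ingredient is the one-sided evaluation $g(-\delta) \geq 0$. The only thing to watch is the sign conventions, namely that $-\delta$ (rather than $+\delta$) must be used so that dropping $-g(-\delta)$ gives an upper bound on $g'(0)$ rather than a lower bound; choosing the other sign would give the symmetric inequality $-g'(0) \leq \epsilon r^2/(2\delta) + \delta r^4 T_r(\tfrac{1}{1-\delta})^2$, which is not what is stated.
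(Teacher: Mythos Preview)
Your proof is correct and essentially identical to the paper's own argument: Taylor expansion of $g$ at $0$ evaluated at $-\delta$, then use $g(-\delta)\ge 0$, Fact~1 for $g(0)$, and Fact~3 for the remainder term. The paper additionally prefaces with ``assume $g'(0)\ge 0$, otherwise trivial,'' but your direct rearrangement makes this unnecessary.
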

\begin{proof}
Assume $g'(0) \geq 0$ (otherwise the statement is trivial). Note that $g(t) \geq 0$ for all $t \in \R$. By Taylor's theorem, there exists $u \in [-\delta, 0]$ such that:
\begin{align*}
    &0 \leq g(-\delta)  = g(0) - g'(0) \cdot \delta + \frac{1}{2} g''(u) \cdot \delta^2, \\
    &\implies g'(0) \leq \frac{g(0)}{\delta} + \frac{1}{2} |g''(u)| \cdot \delta \leq \frac{\epsilon r^2}{2\delta} + \frac{1}{2} |g''(u)| \cdot \delta,
\end{align*}
where we have used that $g(0) = \sigma_1(1, 1) \leq \frac{1}{2}\epsilon r^2$ by \Cref{FACT:sigma1small}. Now apply \Cref{FACT:secondderivative} to conclude the proof.
\end{proof}
% \end{proof}
% We have $r = \Omega(\epsilon^{-1/8})$.
% \end{theorem}
% \begin{proof}
% Let $\eta = \sqrt{\epsilon} \geq \epsilon$. 
We are ready to conclude the argument. Let $g(t) = \sigma_1(1+t, 1-t)$. By Taylor's theorem, there exists a ${u \in [0, \delta]}$ such that:
\begin{align}
    g(\delta) &= g(0) + g'(0) \cdot \delta + \frac{1}{2} g''(u) \cdot \delta^2 \notag \\
    &\leq \frac{1}{2}\epsilon r^2 + \bigg( \frac{1}{2} \epsilon r^2 + \delta^2 r^4 \cdot T_r\big(\frac{1}{1-\delta}\big)^2 \bigg) + \delta^2 r^4 \cdot T_r\big(\frac{1}{1-\delta}\big)^2 \notag \\
    \label{EQ:pfnegativ1}
    &= \epsilon r^2 + 2\delta^2 r^4 \cdot T_r\big(\frac{1}{1-\delta}\big)^2,
\end{align}
where we have used \Cref{FACT:sigma1small}, \Cref{FACT:secondderivative} and \Cref{FACT:derivsmall} to get the inequality.
Now set $\delta = \sqrt{\epsilon} \geq \epsilon$. In light of~\cref{EQ:decomp}, and since $\delta \leq 1$, we have
\begin{align*}
    -3 \delta \cdot g(\delta) &\leq (1-(1+\delta)^2) \cdot g(\delta) \leq f(1 + \delta, 1 - \delta) + \epsilon \leq -4\delta^2 + \delta^4 + \epsilon \leq -2\epsilon,
    \\
    &\implies g(\sqrt{\epsilon}) = g(\delta) \geq \frac{2}{3} \sqrt{\epsilon}.
\end{align*}
Using~\cref{EQ:pfnegativ1}, we thus find that:
\begin{align} \label{EQ:negativfinal}
\frac{2}{3} \sqrt{\epsilon} \leq \epsilon r^2 + 2\epsilon r^4 \cdot T_r\big(\frac{1}{1-\sqrt{\epsilon}}\big)^2, \quad
\implies \frac{1}{3\sqrt{\epsilon}} \leq \frac{1}{2}r^2 + r^4 \cdot T_r\big(\frac{1}{1-\sqrt{\epsilon}}\big)^2.
\end{align}
We may assume that $r = O(1/\sqrt[4]{\epsilon})$ (otherwise there is nothing to prove), in which case \Cref{LEM:chebybound} tells us that $T_r\big(\frac{1}{1-\sqrt{\epsilon}}\big)^2 = O(1)$. But then \cref{EQ:negativfinal} implies that:
\[
    r = \Omega(1/\sqrt[8]{\epsilon}). \qedhere
\]
\end{proof}

\begin{proof}[Proof of \Cref{THM:negative}]
    It remains to see that \Cref{PROP:negative} implies \Cref{THM:negative}, which is rather straightforward. Indeed, any decomposition of $(1-x_1^2)(1-x_2^2) + \epsilon$ in $\quadr(\cube{n})_r$, $n \geq 3$, immediately gives a decomposition of $(1-x_1^2)(1-x_2^2) + \epsilon$ in $\quadr(\cube{2})_r$ by setting $x_3 = \ldots = x_n = 0$ (see also the proof of \Cref{cor::lower}).
\end{proof}

\section{Discussion}
\label{SEC:Discussion}
We have proven an upper bound on the required degree of a Putinar-type representation of a positive polynomial on $\cube{n}=[-1, 1]^n$, described using the inequalities $1-x_1^2, \dots , 1-x_n^2$, of the order $O(\funcmax/\funcmin)$, see \Cref{THM:main}. 
This result improves upon the previously best known bound of $O((\funcmax/\funcmin)^{10})$, obtained from the general result \Cref{COR:GeneralPutinar}.
Complementing this upper bound, we have exhibited a family of polynomials $f=f_\epsilon$ of degree~$4$ with $\funcmax = 1+\epsilon$, $\funcmin = \epsilon$ whose Putinar-type representations are necessarily of degree at least $\Omega( \sqrt[8]{\funcmax / \funcmin}) = \Omega(1/\sqrt[8]{\epsilon})$, see \Cref{THM:negative}.
These results have direct application in polynomial optimization, see
\Cref{cor::upper} and \Cref{cor::lower}.

We remark that the same asymptotic results hold true if we describe $\cube{n}$
using the inequalities $1 \pm x_i$ for $i=1, \dots n$ instead of $1 - x_i^2$. This follows from the identities:
\begin{align*}
    1 \pm x_i & = \frac{1}{2} \left( (1 \pm x_i)^2 + 1 - x_i^2 \right) \\
    1 - x_i^2 & =\frac{1}{2}\left( (1-x_i)^2(1+x_i) + (1+x_i)^2(1-x_i)\right)
\end{align*}

Hereafter we describe more connections of these results with existing literature and 
propose some possible future research directions.

\medskip\noindent\textbf{Improving the upper degree bound.}
In the proof of \Cref{THM:main}, we use an effective Schm\"udgen's Positivstellensatz on a scaled hypercube $[-\eta, \eta]^n$; namely \Cref{cor::scled_schmudgen}. This corollary is responsible for the term of order $O(\sqrt{\funcmax/\funcmin})$ in our result.
\Cref{cor::scled_schmudgen} could be replaced with any other effective Schm\"udgen's Positivstellensatz on $[-\eta, \eta]^n$ with sufficiently good rate of convergence, and this could lead to improvements of the final result. In particular, the dependence on $n, d$ of the constant $C(n,d)$ appearing in \Cref{cor::scled_schmudgen}  is quite bad (see \cite[Eq.~(18)]{LaurentSlot:hypercube}), especially compared to the constant $d^2(\log n)$ we introduce in our proof of \Cref{THM:main}. Combing the proof of \Cref{THM:main} with a better effective Schm\"udgen's Positivstellensatz on $[-\eta, \eta]^n$ would lead to an effective Putinar's Positivstellensatz on $\cube{n}$ that is asymptotically interesting also for $n, d \to \infty$.

\medskip\noindent\textbf{Logarithmic degree bounds.}
In their recent work~\cite{Bach:hypercube}, Bach \& Rudi give an alternative proof of \Cref{THM:boxSchmudgen}, working from the perspective of \emph{trigonometric polynomials}. A trigonometric polynomial (of degree $2d$) is a 1-periodic function of the form:
\[
    F(\x) = \sum_{\omega \in \Z^n} \widehat F(\omega) \cdot \exp(2 i \pi \omega^\top \x),
\]
with $\hat F(\omega) = 0$ whenever $\|\omega\|_\infty > d$. Minimization of a polynomial $f \in \R[\x]_{2d}$ over the unit hypercube $\cube{n}$ is equivalent to minimizing the trigonometric polynomial
\[
    F(\x) = f(\cos 2 \pi x_1, \cos 2 \pi x_2, \ldots, \cos 2 \pi x_n)
\]
over $[0, 1]^n$. Moreover, it turns out that $f$ has a Schm\"udgen-type certificate of nonnegativity with sum-of-squares multipliers of degree $2r$ when $F$ can be written as a sum of squares of trigonometric polynomials of degree $r$, see \cite{Bach:hypercube}.

Bach \& Rudi show bounds with \emph{logarithmic} dependence in $F_{\max} / F_{\min}$ on the required degree~$r$ in a (trigonometric) sum-of-squares representation for a class of positive trigonometric polynomials on~$[0, 1]^n$ satisfying a strict local optimality condition. This result translates immediately to the setting of Schm\"udgen-type representations for (regular) positive polynomials on $[-1, 1]^n$ with the appropriate local optimality condition. 
It would be interesting to see if such assumptions might lead to better degree bounds for the \emph{Putinar}-type representations as well. We remark that the polynomials $\prod_{i=1}^n {(1-x_i^2)} + \epsilon$ featured in \Cref{PROP:negative} ($n=2$) and in \Cref{CONJ:dKL} ($n \geq 2$) do not satisfy this condition. In fact, \Cref{PROP:negative} shows that it is \emph{not} possible to achieve logarithmic degree bounds for representations of general polynomials in the quadratic module (it is not known whether this is possible for the preordering).

Another interesting question is what the results of this work and \cite{LaurentSlot:hypercube}, \cite{Bach:hypercube} suggest to be the best choice of certificate to use for polynomial optimization in practice. On the one hand, Putinar-type certificates lead to smaller SDPs. On the other hand, the error guarantees for Schm\"udgen-type and trigonometric certificates are much better (especially when assuming local optimality conditions). The situation is summarized in \Cref{TAB:comparison}. Adequately addressing this question would require a numerical study, which is an interesting direction for future research.

{
\rowcolors{2}{white}{gray!25}
\begin{table}[]
\renewcommand{\arraystretch}{1.2} % Default value: 1
\begin{tabular}{ccccc}
                \textbf{certificate}& \textbf{\# matrices} & \begin{tabular}[c]{@{}c@{}}\textbf{matrix size} \end{tabular}  & \textbf{error} & \begin{tabular}[c]{@{}c@{}}\textbf{error} (local opt.)\tablefootnote{Here, $a > 0$ and $b > 1$ are constants depending on $n, d$, and local properties of $f$ around its global minimizer. See~\cite[{Thm 4.1}]{Bach:hypercube} for details.}\end{tabular}\\
Putinar         & $n+1$ & ${n + r \choose r}$ & $O(1/r)$    & $O(1/r)$ \\
Schm\"udgen       & $2^n$ & ${n + r \choose r}$ & $O(1/r^2)$  & $O\big(\exp(-(a \, r)^{b}) \big)$ \\
Trigonometric   & $1$   & $(2r + 1)^n$        & $O(1/r^2)$  & $O\big(\exp(-(a \, r)^{b}) \big)$\\
\end{tabular}
\caption{Overview of computational complexity and best-known asymptotic error guarantees for approximation hierarchies for polynomial optimization on $[-1, 1]$ based on different certificates. Shown are the number and size of the matrices occurring in the canonical SDP formulation of each of the hierarchies, when using sum-of-squares multipliers of degree~$2r$. Note that for fixed $n \in \N$ and $r \to \infty$, we have ${n + r \choose r} \approx r^n$ and $(2r + 1)^n \approx 2^n \cdot r^n$. } 
\label{TAB:comparison}
\end{table}
}

\medskip\noindent\textbf{Stability and lower degree bounds for the Positivstellens\"atze.}
{As we have seen above, quantitatively comparing degree bounds for different representations of nonnegative polynomials  provides limitations to the convergence rate of optimization schemes. But despite this usefulness, quantitative \emph{lower} degree bounds have not been extensively investigated: to stimulate research in this direction, hereafter we summarize the known results, with an emphasis on the connection to the concept of \emph{stability}.}

Recall that a quadratic module $\quadr(\vb g)$ is \emph{Archimedean} if there exists an $R \in \R$ such that $R - \metric{\x}{2} \in \quadr(\vb g)$. Clearly, $\quadr(\cube{n})$ is Archimedean,
since $n - x_1^2 - \ldots - x_n^2 \in \quadr(\cube{n})$. Putinar's Positivstellensatz tells us that 
 if $\quadr (\vb g)$ is Archimedean and $f>0$ on $\cS(\vb g)$, then $f \in \quadr(\vb g)$.
As we have shown in \Cref{THM:negative} and \Cref{SEC:negative} the degree needed for the representation $f \in \quadr(\cube{n})$ may go to infinity as $\funcmax / \funcmin$ goes to infinity for $n \ge 2$, even if the degree of $f$ is fixed. 

A strictly related concept is \emph{stability}, introduced in \cite{powers_moment}.
We say that the quadratic module $\quadr(\vb g)$ is \emph{stable} if for all $d \in \N$ there exists a $d \le k \in \N$ such that $\quadr(\vb g) \cap \R[\x]_{\le d} \subseteq \quadr(\vb g)_k$.
\Cref{THM:negative} (through \Cref{PROP:negative}) shows that $\quadr(\cube{n})$ is non-stable for $n \ge 2$:
indeed, the degree needed for the representation of $(1-x_1^2)(1-x_2^2)+\epsilon \in \quadr (\cube{n})$
depends on $\epsilon$ and not only on the degree $d=4$ and $n$.
We can regard \Cref{THM:negative}, \Cref{PROP:negative} and the result of Stengle~\cite{Stengle:negative} as quantitative versions of the \emph{non-}stability property.
%Even if clearly connected, the stability and non-stability properties have not received great attention from the 
%community working on the effective Archimedean Positivstellens\"atze. Therefore, hereafter 
We now give an overview of known results relating Archimedean and stability properties.
%proposing directions for future investigations.

We start with the one dimensional case, i.e. quadradic modules and preorderings that are subsets of $\R[x]$
(for the more general case of quadratic modules and preorderings defining semialgebraic sets on real curves,
see \cite{scheiderer_sums_2003, scheiderer_non-existence_2005, scheiderer_semidefinite_2018}). Recall that in $\R[x]$ every finitely generated quadratic module defining a compact semialgebraic set is an Archimedean preordering, see \cite{scheiderer_distinguished_2005}.
The result of Stengle \cite{Stengle:negative} shows that there are compact, one dimensional subsets of the real line which are defined by a (finitely generated) preordering that is non-stable. This is also an example of an Archimedean quadratic module that is non-stable. 
This happens because the choice for the generator of the preordering is not the \emph{natural} one,  see \cite{kuhlmann_positivity_2002, marshall_positive_2008}. The generator also does not satisfy the constraint qualification conditions.
Indeed, if the preordering defining the compact set contains the natural generators, then the
preordering is stable. This follows from a direct computation as in \cite[Prop.~2.7.3]{marshall_positive_2008} or applying \cite[Cor.~3.18]{scheiderer_non-existence_2005}.
The converse is not true in general: the preordering $\preo (-x^2)$ is stable (and Archimedean) but it does not contain the natural generators $\pm x$ of the origin.
See \cite[Thm.~9.3.3]{marshall_positive_2008} for a generalization of the idea of natural generators.

We turn our attention to the two dimensional case. Every Archimedean preordering defining a semialgebraic subset of $\R^2$ with nonempty interior is non-stable, see \cite[Thm.~5.4]{scheiderer_non-existence_2005} and also \cite[Ex.~5.1]{scheiderer_non-existence_2005}.
Notice that in \cite[Ex.~5.1]{scheiderer_non-existence_2005},
a family of strictly positive polynomials 
and an interior point of the semialgebraic set is used to prove non-stability, while in \Cref{PROP:negative} we use a boundary point.
In particular, the results in \cite{scheiderer_non-existence_2005} apply to both $\quadr(\cube{2})$ and $\preo (\cube{2})$, which are therefore non-stable. We recall also that, despite being non-stable, $\preo (\cube{2})$ is \emph{saturated}, i.e. $\preo (\cube{2}) = \nnon (\cube{2})$ (see \cite{scheiderer_sums_2006} or \cite[Thm.~9.4.5]{marshall_positive_2008}). On the contrary, $\quadr (\cube{2}) \subsetneq \nnon (\cube{2})$. This is an important difference and it is exploited in 
\Cref{PROP:negative} to prove the lower bound for the representation in $\quadr (\cube{n})$.
We do not know if a quantitative version of \cite[Ex.~5.1]{scheiderer_non-existence_2005}, that applies also to the preordering $\preo(\cube{2})$, would give better or worse bounds compared to the bound of \Cref{PROP:negative}. In general, quantitatively comparing \Cref{PROP:negative} and \cite[Ex.~5.1]{scheiderer_non-existence_2005} could be the first step to understand if the lower degree bounds for representations in $\preo(\vb g)$ and $\quadr (\vb g)$ are significantly different.
{In particular, this investigation could help answer the following question: can we find a family of polynomials showing that an exponential convergence for the moment-SOS Schm\"udgen-type hierarchy on $\cube{2}$ (or, more generally, on $\cube{n}$) is not always possible \emph{without} local optimality conditions? (see \Cref{TAB:comparison}).}

For $\vb g$ defining a compact semialgebraic set $\cS(\vb g)$ of dimension $\ge 3$, the preordering $\preo (\vb g)$ is non-stable \cite{scheiderer_non-existence_2005} and it is not saturated, i.e. $\preo (\vb g) \subsetneq \nnon (\cS(\vb g))$. The same results hold true for Archimedean quadratic modules $\quadr (\vb g)$.

{Let us finally recall that the stability and non-stability properties for degree one polynomials are strongly related to the exactness and convergence of Lasserre's spectrahedral approximation of semialgebraic sets, see \cite{netzerExposedFacesSemidefinitely2010, gouveiaPositivePolynomialsProjections2011}. There, other examples of non-stable quadratic modules are studied using boundary points of semialgebraic sets, but no quantitative estimates are given.}

% It is an open question whether a similar lower bound for general polynomials (not satisfying the strict local optimality condition) holds for the preordering as well.

\medskip\noindent\textbf{Acknowledgments.}
The authors wish to thank Claus Scheiderer for the detailed discussion about Archimedean and stability properties for preorderings and quadratic modules. We further wish to thank Monique Laurent for her helpful comments and suggestions on an early version of this work, and Bernard Mourrain for useful discussions. We thank Etienne de Klerk for pointing out a flaw in the statement of \Cref{LEM:Markov}.
The authors finally wish to thank the anonymous referees for their insightful remarks, which helped to improve a preliminary version of this manuscript. 

The first author was partially funded by the European Union’s Horizon 2020 research and innovation programme POEMA, under the Marie Skłodowska-Curie Actions, grant agreement 813211, and by the Paris \^{I}le-de-France Region, under the grant agreement 2021-02--C21/1131.

\bibliographystyle{myamsplain}
\bibliography{bibLucas}

\ifx\undefined\bysame
\newcommand{\bysame}{\leavevmode\hbox to3em{\hrulefill}\,}
\fi
\begin{thebibliography}{10}

\bibitem{artin_uber_1927}
E.~Artin, {\em Uber die {Zerlegung} definiter {Funktionen} in {Quadrate}},
  Abhandlungen aus dem Mathematischen Seminar der Universitat Hamburg {\bf 5}
  (1927), no.~1, 100--115.

\bibitem{Bach:hypercube}
F.~Bach and A.~Rudi, {\em Exponential convergence of sum-of-squares hierarchies
  for trigonometric polynomials}, SIAM Journal on Optimization {\bf 33} (2022),
  no.~3, 2137--2159.

\bibitem{BaldiMourrain:putinar}
L.~Baldi and B.~Mourrain, {\em {On the Effective Putinar's Positivstellensatz
  and Moment Approximation}}, {Mathematical Programming, Series A} {\bf 200}
  (2023), 71--103.

\bibitem{BaldiMourrainParusinski:putinar}
L.~Baldi, B.~Mourrain, and A.~Parusinski, {\em On {{\L}}ojasiewicz inequalities
  and the effective {P}utinar's {P}ositivstellensatz}, 2022, arXiv preprint,
  \url{https://arxiv.org/abs/2212.09551}.

\bibitem{deKlerkLaurent:hypercube2010}
E.~de~Klerk and M.~Laurent, {\em Error bounds for some semidefinite programming
  approaches to polynomial minimization on the hypercube}, SIAM J. Optim. {\bf
  20} (2010), no.~6, 3104--3120.

\bibitem{FangFawzi:sphere}
K.~Fang and H.~Fawzi, {\em The sum-of-squares hierarchy on the sphere and
  applications in quantum information theory}, Math. Program. {\bf 190} (2021),
  331--360.

\bibitem{gouveiaPositivePolynomialsProjections2011}
J.~Gouveia and T.~Netzer, {\em Positive {{Polynomials}} and {{Projections}} of
  {{Spectrahedra}}}, SIAM Journal on Optimization {\bf 21} (2011), no.~3,
  960--976.

\bibitem{Harris:markov}
L.~A. Harris, {\em A proof of {M}arkov's theorem for polynomials on {B}anach
  spaces}, Journal of Mathematical Analysis and Applications {\bf 368} (2010),
  no.~1, 374--381.

\bibitem{krivine_anneaux_1964}
J.~L. Krivine, {\em Anneaux préordonnés}, Journal d’Analyse Mathématique
  {\bf 12} (1964), no.~1, 307--326.

\bibitem{kuhlmann_positivity_2002}
S.~Kuhlmann and M.~Marshall, {\em Positivity, {Sums} of {Squares} and the
  {Multi}-{Dimensional} {Moment} {Problem}}, Transactions of the American
  Mathematical Society {\bf 354} (2002), no.~11, 4285--4301, Publisher:
  American Mathematical Society.

\bibitem{KurpiszLeppanenMastrolilli:hardest}
A.~Kurpisz, S.~Leppänen, and M.~Mastrolilli, {\em On the hardest problem
  formulations for the 0/1 {L}asserre hierarchy}, Mathematics of Operations
  Research {\bf 42} (2016), 145--143.

\bibitem{Lasserre:lowbound}
J.~B. Lasserre, {\em Global optimization with polynomials and the problem of
  moments}, SIAM J. Optim. {\bf 11} (2001), no.~3, 796--817.

\bibitem{Lasserre:book}
\bysame, {\em Moments, positive polynomials and their applications}, Imperial
  College Press, 2009.

\bibitem{Laurent:polopt}
M.~Laurent, Sums of Squares, Moment Matrices and Optimization Over Polynomials,
  157--270, Springer New York, New York, NY, 2009, pp.~157--270.

\bibitem{LaurentSlot:hypercube}
M.~Laurent and L.~Slot, {\em An effective version of {S}chm\"udgen's
  {P}ositivstellensatz for the hypercube}, Opt. Lett. {\bf 17} (2023),
  515–530.

\bibitem{Magron:putinar}
V.~Magron, {\em Error bounds for polynomial optimization over the hypercube
  using {P}utinar type representations}, Opt. Lettl {\bf 9} (2015), 887–895.

\bibitem{MR1511855}
W.~Markoff and J.~Grossmann, {\em \"{U}ber {P}olynome, die in einem gegebenen
  {I}ntervalle m\"{o}glichst wenig von {N}ull abweichen}, Math. Ann. {\bf 77}
  (1916), no.~2, 213--258.

\bibitem{markov1890ob}
A.~A. Markov, {\em Ob odnom voproce di mendeleeva}, Zapiski Imperatorskoi
  Akademii Nauk SP6 {\bf 62} (1890), 1--24.

\bibitem{marshall_positive_2008}
M.~Marshall, {\em Positive {Polynomials} and {Sums} of {Squares}}, American
  Mathematical Society, 2008.

\bibitem{netzerExposedFacesSemidefinitely2010}
T.~Netzer, D.~Plaumann, and M.~Schweighofer, {\em Exposed {{Faces}} of
  {{Semidefinitely Representable Sets}}}, SIAM Journal on Optimization {\bf 20}
  (2010), no.~4, 1944--1955.

\bibitem{NieSchweighofer:putinar}
J.~Nie and M.~Schweighofer, {\em On the complexity of {P}utinar's
  {P}ositivstellensatz}, J. Complex. {\bf 23} (2007), no.~1, 135--150.

\bibitem{ParkHong:handelman}
M.~Park and S.~Hong, {\em {H}andelman rank of zero-diagonal quadratic programs
  over a hypercube and its applications}, J. Glob. Optim. {\bf 56} (2013),
  727--736.

\bibitem{Parillo:thesis}
P.~Parrilo, {\em Structured semidefinite programs and semialgebraic geometry
  methods in robustness and optimization}, 2000, Ph.D. Thesis, California
  Institute of Technology.

\bibitem{powers_moment}
V.~Powers and C.~Scheiderer, {\em The moment problem for non-compact
  semialgebraic sets}, Advances in Geometry {\bf 1} (2001), no.~1, 71--88.

\bibitem{positivbook}
A.~Prestel and C.~Delzell, {\em Positive polynomials: from {H}ilbert’s 17th
  problem to real algebra}, Springer, Berlin, Heidelberg, 2001.

\bibitem{Putinar:positiv}
M.~Putinar, {\em Positive polynomials on compact semi-algebraic sets}, Indiana
  Univ. Math. J. {\bf 42} (1993), 969--984.

\bibitem{scheiderer_sums_2003}
C.~Scheiderer, {\em Sums of squares on real algebraic curves}, Mathematische
  Zeitschrift {\bf 245} (2003), no.~4, 725--760.

\bibitem{scheiderer_distinguished_2005}
\bysame, {\em Distinguished representations of non-negative polynomials},
  Journal of Algebra {\bf 289} (2005), no.~2, 558--573.

\bibitem{scheiderer_non-existence_2005}
\bysame, {\em Non-existence of degree bounds for weighted sums of squares
  representations}, Journal of Complexity {\bf 21} (2005), no.~6, 823--844.

\bibitem{scheiderer_sums_2006}
\bysame, {\em Sums of squares on real algebraic surfaces}, Manuscripta
  Mathematica {\bf 119} (2006), no.~4, 395--410.

\bibitem{scheiderer_semidefinite_2018}
\bysame, {\em Semidefinite {Representation} for {Convex} {Hulls} of {Real}
  {Algebraic} {Curves}}, SIAM Journal on Applied Algebra and Geometry {\bf 2}
  (2018), no.~1, 1--25, Publisher: Society for Industrial and Applied
  Mathematics.

\bibitem{Schmudgen:positiv}
K.~Schmüdgen, {\em The {K}-moment problem for compact semi-algebraic sets.},
  Mathematische Annalen {\bf 289} (1991), no.~2, 203--206.

\bibitem{Schweighofer:schmudgen}
M.~Schweighofer, {\em On the complexity of {S}chmüdgen's
  {P}ositivstellensatz}, J. Complex. {\bf 20} (2004), no.~4, 529--543.

\bibitem{Shadrin:Markov}
A.~Shadrin, {\em Twelve proofs of the {M}arkov inequality}, Approximation
  theory -- a volume dedicated to {B}. {B}ojanov (2004), 233--298.

\bibitem{Skalyga:markov}
V.~Skalyga, {\em Bounds for the derivatives of polynomials on centrally
  symmetric convex bodies}, Izv. Math. {\bf 69} (2005), 607--621.

\bibitem{Slot:CD}
L.~Slot, {\em Sum-of-squares hierarchies for polynomial optimization and the
  {C}hristoffel-{D}arboux kernel}, SIAM J. Opt. {\bf 32} (2022), no.~4,
  2612--2635.

\bibitem{stengle_nullstellensatz_1974}
G.~Stengle, {\em A nullstellensatz and a positivstellensatz in semialgebraic
  geometry}, Mathematische Annalen {\bf 207} (1974), no.~2, 87--97.

\bibitem{Stengle:negative}
\bysame, {\em Complexity estimates for the {S}chm\"udgen {P}ositivstellensatz},
  J. Complex. {\bf 12} (1996), 167--174.

\bibitem{Szego:book}
G.~Szeg\"o, {\em Orthogonal polynomials}, American Mathematical Society, 1975.

\end{thebibliography}
 \appendix
 \section{Explicit expressions for degree shifts}
 \label{app::degree_shift}

 In this appendix we discuss \cref{eq::direct}:
 \[
     1 - x^2 = \frac{(q-1) - qx^2 + x^{2q}}{q} + \frac{1-x^{2q}}{q}
 \]
 that is a key ingredient for the proof of the upper bound. Hereafter we provide some related explicit formulae and
 their consequences in the proof of \Cref{thm::from_preordering_to_module}.

 Despite its simplicity, it is difficult to derive this kind of expressions.
 Indeed, this is a representation for $1 - x^2 \in \quadr (1-x^{2q})$
 (see \cref{eq::sos} below for an explicit sum-of-squares expression of $(q-1) - qx^2 + x^{2q}$).
 Obtaining \emph{exact} representations for polynomials in quadratic modules is challenging,
 even in the univariate case, and to the authors' best knowledge there is currently no software available
 to solve the problem in general.

 We therefore discuss in more detail how \cref{eq::direct} was obtained, and provide explicit 
 expressions for \Cref{lem::basic} and \Cref{lem::basic2}.

 Consider the equation:
 \begin{equation*}
     1 - x = \frac{1}{2}\left( (1 - x)^2 + 1 - x^2 \right) \in \quadr(1-x^{2})_{2}
 \end{equation*}
 If we substitute $x = x^2$ we obtain 
 \begin{equation*}
     1 - x^2 = \frac{1}{2}\left( (1 - x^2)^2 + 1 - x^4 \right) \in \quadr(1-x^{4})_{4}
 \end{equation*}
 More generally, substituting $x = x^{2^{m-1}}$, we have:
 \begin{equation*}
     1 - x^{2^{m-1}} = \frac{1}{2}\left( (1 - x^{2^{m-1}})^2 + 1 - x^{2^{m}} \right) \in \quadr(1-x^{2^m})_{2^m}
 \end{equation*}
 It is then possible to obtain the explicit formula:
 \begin{equation}\label{eq::recurrence}
     1 - x^2 = \sum_{i=1}^{m-1} \left(\frac{1}{2^i} (1 - x^{2^i})^2 \right) + \frac{1}{2^{m-1}}(1 - x^{2^m}) \in \quadr(1 - x^{2^m})_{2^m}
 \end{equation}
 which is equivalent to \cref{eq::direct} with $2q = 2^m$.

 Using \cref{eq::recurrence}, we can deduce also the explicit expression for $\eta^2 - x_k^2 \in \quadr(n - \metric{\x}{2^m})_{2^m}$ in
 \Cref{lem::basic2} :
 \begin{equation}\label{eq::deformation}
     \eta^2 - x_k^2 = \eta^2 \sum_{i=1}^{m-1} \left(\frac{1}{2^i} \left(1 - \left(\frac{x_k}{\eta^2}\right)^{2^i}\right)^2 \right) + \frac{\eta^2}{2^{m-1} n}\left(\sum_{j\neq k}x_j^{2^m} + n - \metric{\x}{2^m}\right)
 \end{equation}

 We have therefore seen that for $2q = 2^m$ the necessary representations can be derived easily.
 It is then possible to make an educated guess to avoid the power of $2$, writing $1-x^2 = f_q + \frac{1-x^{2q}}{q}$. We then obtain the  polynomial $f_{q} = \frac{(q-1) - q x^2 + x^{2q}}{q}$, that is nonnegative and thus a sums of squares. An explicit way to note this is by writing:
 \begin{equation*}
     \begin{cases}
         f_1 = 0 \\
         f_{q+1} = \frac{q}{q+1}x^2 f_q + \frac{q}{q+1} (1-x^2)^2
     \end{cases}
 \end{equation*}
 or more directly:
 \begin{equation}
         \label{eq::sos}
     f_q = \sum_{i=1}^{q-1} \frac{q-i}{q}x^{2(i-1)}(1-x^2)^2
 \end{equation}
 Therefore the final explicit expression for $1 - x^2 \in \quadr (1-x^{2q})_{2q}$ in \Cref{lem::basic} is:
 \begin{equation*}
     1-x^2 = \left( \sum_{i=1}^{q-1} \frac{q-i}{q}x^{2(i-1)}(1-x^2)^2 \right) + \frac{1-x^{2q}}{q} \in \quadr (1-x^{2q})_{2q}
 \end{equation*}
 We can deduce also an explicit expression for \Cref{lem::basic2}, i.e. for 
 $\eta^2 - x_i^2 \in {\quadr(n - \metric{\x}{2q})_{2q}}$:
 \begin{equation}\label{eq::explicit_preo_module}
     \eta^2 - x_i^2 = \eta^2 f_q\left(\frac{x_i}{\eta}\right) + \frac{\eta^2}{q n} \left( \sum_{j\neq i} x_j^{2q} + n - \metric{\x}{2q} \right)
 \end{equation}
 with $f_q$ as in \cref{eq::sos}. The equations \cref{eq::explicit_preo_module} and \cref{eq::sos} give also explicit expressions for the inclusion
 \[
     \preo(\eta^2 - x_1^2, \dots , \eta^2 - x_n^2)_{k} \subseteq\quadr(1 - x_1^2, \dots , 1 - x_n^2)_{k + n (2q-2)}
 \]
 in \Cref{thm::from_preordering_to_module}.

\end{document}